\newtheorem{proposition}{Proposition}[section]
\newtheorem{lemma}[proposition]{Lemma}
\newtheorem{corollary}[proposition]{Corollary}
\newtheorem{theorem}[proposition]{Theorem}
\theoremstyle{definition}
\newtheorem{definition}[proposition]{Definition}
\newtheorem{examples}[proposition]{Examples}
\newcommand{\thlabel}[1]{\label{th:#1}}
\newcommand{\selabel}[1]{\label{se:#1}}
\newcommand{\seref}[1]{Section~\ref{se:#1}}
\newcommand{\colabel}[1]{\label{co:#1}}
\newcommand{\delabel}[1]{\label{de:#1}}
\newcommand{\id}{id}
\newcommand{\Cc}{\mathcal{C}}
\def\*C{{}^*\hspace*{-1pt}{\Cc}}
\def\text#1{{\rm {\rm #1}}}
\newcommand{\trl}{\triangleleft}
\newcommand{\trr}{\triangleright}
\newcommand{\ppl}{\leftharpoonup}
\newcommand{\ppr}{\rightharpoonup}
\title{Extending structures for Zinbiel 2-algebras}
\author[L. Zhang]{Ling Zhang}
\author[T. Zhang]{Tao Zhang}
\begin{document}
 \maketitle

 \setcounter{section}{0}

\begin{abstract}
 The extending structures problem for Zinbiel 2-algebras is studied. We introduce the concept of unified products for Zinbiel 2-algebras.
 Some special cases of unified products such as crossed product and matched pair of Zinbiel 2-algebras are investigated.
 It is proved that the extending problem for Zinbiel 2-algebras can be classified by some non-abelian cohomology theory.

\end{abstract}

\maketitle
\section*{Introduction}

The notion of $L_{\infty}$-algebras which generalizes Lie algebras first
appeared in deformation theory and mathematics physics. The algebraic theory of Lie 2-algebras
was studied by Baez and Crans in \cite{BC}. It is showed that a
Lie 2-algebra can be seen as a categorification of a Lie algebra,
where the underling vector space is replaced by 2-vector space and
the Jacobi identity is replaced by a nature transformation which
satisfies some coherence law.

The study of Zinbiel algebras was initiated by Loday \cite{Lod}.
It proved that the cohomology of Leibniz algebras and Rack cohomology is a Zinbiel algebra, see  \cite{Lod, Covez}.
It is also known as Tortkara algebras, pre-commutative and chronological algebras \cite{Kaw,K16}.
The classification of low dimensional Zinbiel algebras was investigated in \cite{AOK,ALO,DT,KPPV,K21,Ni}.
According to operad theory, Zinbiel algebras are Koszul dual to Leibniz algebras.
Some other algebraic theory of Zinbiel algebras were studied in \cite{MS,N1,N2,S20,Yau}.
It has been appeared some interesting applications of Zinbiel algebras in multiple zeta values and construction of a Cartesian differential category
\cite{C1,IP}. For more recent studies of Zinbiel algebras,  see \cite{C2,Covez,GGZ}.

The extending structures  for some algebraic objects such as associative algebras, Lie algebras, Hopf algebras, Leibniz algebras and Poisson algebras
 have been studied  by A. L. Agore and G. Militaru in \cite{AM01,AM02,AM1,AM2,AM3,AM4,AM5, AM6}.
 The extending structures for  Zinbiel algebras  and Lie 2-algebras have been studied in  \cite{ZZ21, TW}.
 The extending structures for left-symmetric algebras, associative and Lie conformal algebras has also been studied by Y. Hong and Y. Su in  \cite{Hong1,Hong2,Hong3}.
 The extending structures for Lie conformal superalgebras, 3-Lie algebras, Lie bialgebras and infinitesimal bialgebras were studied  in  \cite{ZCY,Zhang2201,Zhang2202,Zhang2203}.

Recently,  Zinbiel 2-algebra was introduced by the second author in \cite{Zhang2101}.
It is proved that the category of (strict) Zinbiel 2-algebras and the category of crossed modules of Zinbiel algebras are equivalent.

This paper is going to study extending structures problem for Zinbiel 2-algebras.
We define  the extending datum and unified products for Zinbiel 2-algebras.
 It is proved that the extending extending  problem for Zinbiel 2-algebras can be classified by equivalent classes of extending datum which is called the non-abelian cohomology theory.
  Some special cases of unified products such as crossed product and bicrossed product of Zinbiel 2-algebras are also investigated.

The organization of this paper are as follows.
In \seref{prelim}, we recall some notations and facts about Zinbiel algebras and Zinbiel 2-algebras.
In \seref{unifiedprod}, we introduce concept of the extending datum and unified product of Zinbiel 2-algebras.
We prove that extending extending  problem for Zinbiel 2-algebras can be classified by equivalent classes of extending datum.
In \seref{cazurispeciale} we show some special cases of unified products: crossed product and bicrossed products.
Using them we solve the nonabelian extension problem and factorization problem for Zinbiel 2-algebras respectively.

Throughout this paper, all vector spaces are assumed to be over an algebraically closed field $k$ of characteristic different from 2 and 3.
Let $V=V_0\oplus V_1$ and $W=W_0\oplus w_1$ be two $\mathbb{Z}_2$-graded vector spaces. Then the direct sum $V\oplus W$ is graded
by $(V\oplus W)_0 = V_0\oplus W_0$ and $(V\oplus W)_1 = V_1\oplus W_1$.
The identity map of a vector space $V$ is denoted by $id_V: V\to V$ or simply $id: V\to V$.

\section{Preliminaries}\selabel{prelim}
In this section, a strict Zinbiel 2-algebras is introduced. This kind of Zinbiel 2-algebras can be described in terms of crossed modules of Zinbiel algebras.

\begin{definition}
A Zinbiel algebra is a vector space ${Z}$ together with a multiplication $\cdot  : {Z} \times {Z} \to
{Z}$ satisfying the following Zinbiel identity:
\begin{equation}
(x\cdot y)\cdot z=x\cdot(y\cdot z+z\cdot y)
\end{equation}
for all $x,y,z \in {Z}$. We denote a Zinbiel algebra by $(Z,\cdot)$ or simply by $Z$.
\end{definition}

\begin{definition}
Let ${Z}$ be a Zinbiel algebra, $V$ a vector space. A bimodule of ${Z}$ over a vector space $V$ is a pair of linear maps \, $ \trr :{Z}\times V \to V,(x,v) \to x \trr v$ and $\trl :V\times {Z} \to V,(v,x) \to v \trl x$  such that the following conditions hold:
\begin{eqnarray}
  &&(x \cdot y) \trr v = x \trr (y \trr v+v\trl y),\\
  && (v \trl x) \trl y =v\trl(x \cdot y+y \cdot x) ,\\
  &&(x\trr v)\trl y  = x\trr (v \trl y+y \trr v),
\end{eqnarray}
for all $x,y\in {Z}$ and $v\in V.$
\end{definition}

The proof of the following Proposition \ref{prop:01} is by direct computations, so we omit the details.

\begin{proposition}\label{prop:01}
Let $Z$ be a Zinbiel algebra and $V$ be $Z$-bimodule. Then the direct sum vector space $Z \oplus V$ is a Zinbiel algebra with multiplication defined by:
\begin{equation}
(x, u)\cdot (y, v)= (x\cdot y, x \triangleright v + u\triangleleft y)
\end{equation}
for all $ x, y \in Z, u, v \in V$. This is called the semi-direct product of $Z$ and $V$.
\end{proposition}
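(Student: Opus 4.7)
The plan is a direct verification of the Zinbiel identity in $Z \oplus V$ using the data we have: the Zinbiel identity on $Z$ and the three bimodule axioms. Take three generic elements $(x,u), (y,v), (z,w) \in Z \oplus V$ and expand $((x,u) \cdot (y,v)) \cdot (z,w)$ and $(x,u) \cdot ((y,v) \cdot (z,w) + (z,w) \cdot (y,v))$ using the definition of the proposed multiplication. I would then compare the two resulting pairs component by component.

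The first component in both expressions lands in $Z$ and collapses to $(x \cdot y) \cdot z$ versus $x \cdot (y \cdot z + z \cdot y)$, which are equal since $(Z,\cdot)$ is itself a Zinbiel algebra. The second component lies in $V$ and is the substantive part. After expanding, the $V$-component of the left-hand side is
\[
(x \cdot y) \triangleright w + (x \triangleright v) \triangleleft z + (u \triangleleft y) \triangleleft z,
\]
while the right-hand side produces
\[
x \triangleright (y \triangleright w) + x \triangleright (w \triangleleft y) + x \triangleright (v \triangleleft z) + x \triangleright (z \triangleright v) + u \triangleleft (y \cdot z) + u \triangleleft (z \cdot y).
\]

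I would then match these by isolating the terms according to which of $u, v, w$ they contain. The $w$-terms reduce to $(x \cdot y) \triangleright w = x \triangleright (y \triangleright w + w \triangleleft y)$, which is exactly the first bimodule axiom. The $u$-terms reduce to $(u \triangleleft y) \triangleleft z = u \triangleleft (y \cdot z + z \cdot y)$, which is the second bimodule axiom. The $v$-terms reduce to $(x \triangleright v) \triangleleft z = x \triangleright (v \triangleleft z + z \triangleright v)$, which is the third bimodule axiom. Since each piece holds, the two sides agree in $V$ as well.

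The argument is thus entirely mechanical, so I expect no conceptual obstacle; the only thing to be careful about is bookkeeping the six terms on the right-hand side and partitioning them correctly by which of $u, v, w$ they involve, so that the three bimodule axioms are applied to the right groups. This is presumably why the authors say the proof is by direct computation and omit the details.
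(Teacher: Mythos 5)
Your verification is correct and is exactly the direct computation the paper alludes to and omits: the $Z$-component reduces to the Zinbiel identity on $Z$, and the three groups of $V$-terms (those involving $w$, $v$, and $u$) reduce precisely to the three bimodule axioms in the order you state. Nothing is missing.
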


\begin{definition}
A Zinbiel algebra $Z_{0}$ acts on another Zinbiel algebra $Z_{1}$ if there are two bilinear maps, $Z_{0} \times Z_{1} \to Z_{1},(x, y) \mapsto x \trr y$ and $Z_{1} \times Z_{0} \rightarrow Z_{1},(y, x) \mapsto y \trl x$ such that $Z_1$ is a $Z_0$-bimodule and the following conditions hold:
\begin{eqnarray}
(x_{0} \trr x_{1}) \cdot y_{1}=x_{0} \trr (x_{1} \cdot y_{1}+y_{1} \cdot x_{1}),\\
(x_{1} \trl x_{0}) \cdot y_{1}=x_{1} \cdot(x_{0} \trr y_{1}+y_{1} \trl x_{0}),\\
(x_{1} \cdot y_{1}) \trl x_{0}=x_{1} \cdot (y_{1} \trl x_{0}+ x_{0}\trr  y_{1}),
 \end{eqnarray}
for all $x_{0}\in Z_{0}, x_{1}, y_{1} \in Z_{1}.$
\end{definition}

\begin{definition}
 Let $(Z_{0}, \cdot)$ and $(Z_{1}, \cdot)$ be two Zinbiel algebras. A (strict) Zinbiel 2-algebra or a crossed module of Zinbiel algebras is a homomorphism of Zinbiel algebras $\varphi: Z_{1} \rightarrow Z_{0}$ together with an action of $Z_{0}$ on $Z_{1}$, denoted by $\triangleright: Z_{0} \times Z_{1} \rightarrow Z_{1},(x_{0}, x_{1}) \mapsto x_{0} \triangleright x_{1}$ and $\triangleleft: Z_{1} \times Z_{0} \rightarrow Z_{1}, (x_{1}, x_{0}) \mapsto x_{1} \triangleleft x_{0}$,  the following conditions hold:
\begin{eqnarray}
\varphi(x_{0} \triangleright x_{1}) &=& x_{0} \cdot \varphi(x_{1}), \\
\varphi(x_{1} \triangleleft x_{0}) &=&  \varphi(x_{1}) \cdot x_{0},\\
\varphi(x_{1}) \triangleright y_{1} &=&  x_{1} \cdot y_{1} = x_{1} \triangleleft \varphi(y_{1}),
\end{eqnarray}
for all $x_{i}, y_{i} \in Z_{i}, i=0,1.$
\end{definition}
We remark that from the above crossed module conditions one get $\varphi$ is a Zinbiel algebra homomorphism:
$$\varphi( x_{1} \cdot y_{1}) = \varphi(\varphi(x_{1}) \trr y_{1} )=\varphi(x_{1}) \cdot \varphi( y_{1} ).$$
 In the following context, a  Zinbiel 2-algebra is always  means a crossed module of Zinbiel algebras which is denoted by $(Z_1, Z_0,\varphi)$.

\begin{examples}A 2-vector space is a pair of vector spaces $(V_1,V_0)$ with a  linear map between them ${d}: V_1\to V_0.$
We can see a 2-vector space as a  crossed module of trivial Zinbiel algebras  $Z_{0}=V_0$, $Z_{1}=V_0$ and all the structure maps except $\varphi={d}$ are the trivial ones.
\end{examples}

\begin{examples}\label{exam01}
Let $I$ be a two sided ideal of a Zinbiel algebra $Z$,  then
$(I, Z, i)$ is a crossed module, where $i$ is the canonical inclusion map. In particular, if
$I= 0$ or $I = Z$, then $(0, Z, 0)$ and $(Z, Z, id)$ are crossed modules.
For a given Zinbiel algebra $Z$, then $(0, Z, 0)$ will be called the trivial example of crossed module or  Zinbiel 2-algebra.
\end{examples}

\begin{definition}
A morphism between two Zinbiel 2-algebra $(Z_1, Z_0,\varphi)$ and $(Z'_1, Z'_0,\varphi')$  is a pair of maps $\phi=(\phi_1,\phi_0)$ such that $\phi_1:Z_1\to Z'_1$ and $\phi_0:Z_0\to Z'_0$ are Zinbiel algebra homomorphisms  and $\varphi'\circ \phi_1=\phi_0\circ \varphi$.
An isomorphism is a morphism such that $\phi_1$ and $\phi_0$ are all bijection maps.

A Zinbiel 2-algebra $(Z'_1, Z'_0,\varphi')$ is  a subalgebra  of $(Z_1, Z_0,\varphi)$  if $Z'_1$ is subalgebra of $Z_1$, $Z'_0$ is subalgebra of $Z_0$ and
$\varphi|_{Z'_1}=\varphi'$.
\end{definition}

\begin{definition}
Let $(Z_1, Z_0,\varphi)$ be a given Zinbiel 2-algebra, $(V_1,V_0,d)$ a 2-vector space.
An extending structure of $(Z_1, Z_0,\varphi)$ through $(V_1,V_0,d)$ is a Zinbiel 2-algebra on $(E_1,E_0,\varphi_E)$
such that $(Z_1, Z_0,\varphi)$ is a subalgebra of $(E_1,E_0,\varphi_E)$,
which fit into the following exact sequence as vector spaces
\begin{eqnarray} \label{diag01}
\xymatrix {0 \ar[r]^{}  & {Z_1} \ar[r]^{\iota_1} \ar[d]_{\varphi} & {E_1}  \ar[r]^{\pi_1}\ar[d]^{\varphi_E}
& {V_1}\ar[d]_{}\ar[r]^{} \ar[d]^{{d}} & 0 \\
0 \ar[r]^{}& {Z_0} \ar[r]^{\iota_0} & {E_0}\ar[r]^{\pi_0} & V_0  \ar[r]^{} & 0}
\end{eqnarray}
where $\pi_i: E_i \to V_i$ is the canonical projection of $E_i$ on $V_i$ and $\iota_i: {Z_i} \to E_i$ is the
inclusion map.
This means that $E_1=Z_1\oplus V_1$ and $E_0=Z_0\oplus V_0$ as vector spaces,  $Z_1$ is subalgebra of  $E_1$ and $Z_0$ is subalgebra of  $E_0$.
The extending problem is to describe and classify up to an isomorphism  the set of all Zinbiel 2-algebra structures that can be defined on $(E_1,E_0,\varphi_E)$ such that $(Z_1, Z_0,\varphi)$ is a subalgebra of $(E_1,E_0,\varphi_E)$.
\end{definition}

\begin{definition} \delabel{echivextedn}
Let $(Z_1, Z_0,\varphi)$  be a Zinbiel 2-algebra, $(E_1,E_0,\varphi_E)$ a 2-vector space such that
$(Z_1, Z_0,\varphi)$ is a subalgebra of $(E_1,E_0,\varphi_E)$. For linear maps $\phi=(\phi_1,\phi_0)$ where $\phi_1: E_1 \to E_1$ and $\phi_0: E_0 \to E_0$ we consider the following diagram:
\begin{eqnarray}\label{diag02}
\xymatrix{
Z_1\ar[rd]\ar[rr]^{\iota_1}\ar[dd]&&E_1\ar[rr]^{\pi_1}\ar[rd]^{\phi_1}\ar[dd]|\hole&&V_1\ar[rd]\ar[dd]|\hole\\
 &Z_1\ar[rr]^{\iota_1}\ar[dd]&&E_1\ar[rr]^{\pi_1}\ar[dd]&&V_1\ar[dd]\\
Z_0\ar[rd]\ar[rr]|\hole^{\iota_0}&&E_0\ar[rd]^{\phi_0}\ar[rr]|\hole^{\pi_0}&&V_0\ar[rd]\\
&Z_0\ar[rr]^{\iota_0}&&E_0\ar[rr]^{\pi_0}&&V_0\\ }
\end{eqnarray}
We say that $\phi=(\phi_1,\phi_0)$ \emph{stabilizes}$(Z_1, Z_0,\varphi)$  if the left cube of the diagram  is
commutative, and $\phi$ \emph{stabilizes} $(V_1, V_0, d)$ if the right cube of the diagram is
commutative.

Let $(E_1,E_0,\varphi_E)$ and $(E_1,E_0,\varphi'_E)$ be two Zinbiel 2-algebra structures
on $(E_1,E_0)$ both containing $(Z_1, Z_0,\varphi)$  as a subalgebra. They are called \emph{equivalent}, and we
denote this by $(E_1,E_0,\varphi_E)\equiv (E_1,E_0,\varphi'_E)$, if
there exists a Zinbiel 2-algebra isomorphism $\phi=(\phi_1,\phi_0)$ which stabilizes $(Z_1, Z_0,\varphi)$ . Denote by $Extd(E,{Z})$ the set of equivalent classes of $(Z_1, Z_0,\varphi)$  through $(V_1, V_0, d)$.
$(E_1,E_0,\varphi_E)$ and $(E_1,E_0,\varphi'_E)$  are called \emph{cohomologous},
and we denote it by $(E_1,E_0,\varphi_E)$\,$\approx$\,$ (E_1,E_0,\varphi'_E)$, if there exists a Zinbiel 2-algebra isomorphism
$\phi=(\phi_1,\phi_0)$ which stabilizes ${Z}$ and co-stabilizes $V$.
Denote by $Extd'(E,{Z})$  the set of  cohomologous classes of $(Z_1, Z_0,\varphi)$  through $(V_1, V_0, d)$.

\end{definition}

\section{Unified Products for Zinbiel 2-algebras}\selabel{unifiedprod}

In this section, we introduced a unified product for Zinbiel 2-algebras.

\begin{definition}\label{def:01}
 Suppose that $(Z_{1}, Z_{0}, \varphi)$ is a Zinbiel 2-algebra and $(V_{1}, V_{0}, d)$ is a 2-vector space. An extending datum of $(Z_1, Z_0,\varphi)$ by $(V_{1}, V_{0}, d)$ is a system $\Omega(Z, V)= (\leftharpoonup_{j}, \rightharpoonup_{j}, \triangleleft_{j}, \triangleright_{j}, \omega_{j}, *_{j}; j=0,1,2,3$) consisting of one linear map $\sigma: V_{1} \rightarrow Z_{0}$ and twenty-four bilinear maps
 \begin{eqnarray*}
\rightharpoonup_{0}: V_{0} \times Z_{0} \rightarrow Z_{0}, \quad\leftharpoonup_{0}: Z_{0} \times V_{0} \rightarrow Z_{0}, \quad \triangleright_{0}: Z_{0} \times V_{0} \rightarrow V_{0},
\quad \triangleleft_{0}: V_{0} \times Z_{0} \rightarrow V_{0},\\
\rightharpoonup_{1}: V_{1} \times Z_{1} \rightarrow Z_{1},\quad \leftharpoonup_{1}: Z_{1} \times V_{1} \rightarrow Z_{1}, \quad\triangleright_{1}: Z_{1} \times V_{1} \rightarrow V_{1},
\quad \triangleleft_{1}: V_{1} \times Z_{1} \rightarrow V_{1}, \\
\rightharpoonup_{2}: V_{0} \times Z_{1} \rightarrow Z_{1},\quad \leftharpoonup_{2}: Z_{0} \times V_{1} \rightarrow Z_{1},\quad \triangleright_{2}: Z_{0} \times V_{1} \rightarrow V_{1},
\quad \triangleleft_{2}: V_{0} \times Z_{1} \rightarrow V_{1},\\
\rightharpoonup_{3}: V_{1} \times Z_{0} \rightarrow Z_{1}, \quad\leftharpoonup_{3}: Z_{1} \times V_{0} \rightarrow Z_{1},\quad\triangleright_{3}: Z_{1} \times V_{0} \rightarrow V_{1},
\quad \triangleleft_{3}: V_{1} \times Z_{0} \rightarrow V_{1},\\
\omega_{0}: V_{0} \times V_{0} \rightarrow Z_{0},\quad *_{0}: V_{0} \times V_{0} \rightarrow V_{0},\\
\quad \omega_{1}: V_{1} \times V_{1} \rightarrow Z_{1},
\quad *_{1}: V_{1} \times V_{1} \rightarrow V_{1}\\
\quad \omega_{2}: V_{0} \times V_{1} \rightarrow  Z_{1}, \quad *_{2}: V_{0} \times V_{1} \rightarrow V_{1},\\
\quad \omega_{3}: V_{1} \times V_{0} \rightarrow Z_{1}, \quad *_{3}: V_{1} \times V_{0} \rightarrow V_{1}.
\end{eqnarray*}

Let $\Omega(Z, V)=\left(\leftharpoonup_{j}, \rightharpoonup_{j}, \triangleleft_{j}, \triangleright_{j}, \omega_{j}, \ast_{j}, \sigma; j=0,1,2,3\right)$ be an extending datum. We construct a new Zinbiel 2-algebra $(E_1,E_0,\varphi_E)$ as follows.

As a 2-vector space, $(E_1,E_0,\varphi_E)$ where $E_1=Z_{1}\oplus V_{1} $, $E_0=Z_{0}\oplus V_{0} $ and $\varphi_E:
Z_{1}\oplus V_{1} \rightarrow Z_{0} \oplus V_{0}$ is defined as follows:
 \begin{eqnarray}
 \varphi_E(x, u)=(\varphi(x)+\sigma(u), d(u)),
 \end{eqnarray}
for all $x \in Z_{1}$ and $u \in V_{1}$. The multiplication maps $\circ_{i}:\left(Z_{i} \oplus V_{i}\right) \times\left(Z_{i} \oplus V_{i}\right) \rightarrow Z_{i} \oplus V_{i}$ and the action maps $\trr: (Z_{0} \oplus V_{0}) \times (Z_{1} \oplus V_{1})\to Z_{1} \oplus V_{1},$ $\trl: (Z_{1} \oplus V_{1}) \times (Z_{0} \oplus V_{0})\to Z_{1} \oplus V_{1} $ is given by
$$
\left(x_{i}, u_{i}\right) \circ_{i} \left(y_{i}, v_{i}\right)
 =\big( x_{i}\cdot y_{i} + x_{i} \ppl_{i} v_{i} + u_{i}\ppr_{i} y_{i} + \omega_{i}(u_{i}, v_{i}),\  x_{i} \trr_{i} v_{i} + u_{i}\trl_{i} y_{i} + u_{i} \ast_{i} v_{i} \big),
$$
and
$$
\left(x_{0}, u_{0}\right) \trr \left( x_{1}, u_{1}\right)
 = \big( x_{0}\cdot x_{1} + x_{0} \ppl_{2} u_{1} + u_{0}\ppr_{2} x_{1} + \omega_{2}(u_{0}, u_{1}),\  x_{0} \trr_{2} u_{1} + u_{0}\trl_{2} x_{1} + u_{0} \ast_{2} u_{1} \big),
$$
$$
\left(x_{1}, u_{1}\right) \trl \left( x_{0}, u_{0}\right)
 = \big( x_{1}\cdot x_{0} + x_{1} \ppl_{3} u_{0} + u_{1}\ppr_{3} x_{0} + \omega_{3}(u_{1}, u_{0}),\  x_{1} \trr_{3} u_{0} + u_{1}\trl_{3} x_{0} + u_{1} \ast_{3} u_{0} \big).
$$
for $i = 0,1$ and all $x_{i}, y_{i} \in Z_{i}, u_i, v_i\in V_i$.
This Zinbiel 2-algebra is called the \emph{unified product}  of $(Z_1, Z_0,\varphi)$ and
$(V_1, V_0, d)$.  We will denoted it by  ${Z} \, \natural {V}$.
\end{definition}

\begin{theorem}\label{thm:unifyprod}
Suppose that $(Z_{1}, Z_{0}, \varphi)$ is  a Zinbiel 2-algebra and $(V_{1}, V_{0}, d)$ is a 2-vector space. Then $\Omega(Z, V)$ is an extending datum of $(Z_{1}, Z_{0}, \varphi)$ by $(V_{1}, V_{0}, d)$  such that $(E_1,E_0,\varphi_E)$ is a Zinbiel 2-algebra if and only if the following conditions hold for all $x_{i}, y_{i} \in Z_{i}$, $u_{i}, v_{i}\in V_{i}$, $i=0,1:$
\begin{enumerate}
\item[(Z1)] $(V_{i}, \trl_{i}, \trr_{i})$ is an ${Z_{i}}$-bimodule:
 \begin{eqnarray*}
            &&( x_{i}\cdot y_{i} )\trr_{i} w_{i}=x_{i}\trr_{i}( y_{i}\trr_{i} w_{i}+w_{i}\trl_{i} y_{i}),\\
           &&(x_{i}\trr_{i} v_{i})\trl_{i} z_{i}=x_{i}\trr_{i}(v_{i}\trl_{i} z_{i}+ z_{i}\trr_{i} v_{i}),\\
            &&(u_{i}\trl_{i} y_{i})\trl_{i} z_{i}=u_{i}\trl_{i} (y_{i}\cdot z_{i}+ z_{i}\cdot y_{i}),
            \end{eqnarray*}
\item[(Z2)]
$(x_{i}\ppl_{i} v_{i})\cdot y_{i}+(x_{i} \trr_{i} v_{i})\ppr_{i} y_{i}
=x_{i}\cdot (v_{i} \ppr_{i} y_{i}+y_{i} \ppl_{i} v_{i})+x_{i}\ppl_{i}( v_{i}\trl_{i} y_{i} + y_{i} \trr_{i} v_{i}),$
\item[(Z3)] $(u_{i} \ppr_{i} x_{i})\cdot y_{i} +( u_{i}\trl_{i} x_{i} )\ppr_{i} y_{i} = u_{i}\ppr_{i}( x_{i} \cdot  y_{i} + y_{i}\cdot  x_{i} ),$
\item[(Z4)]
$\omega_{i}(u_{i}, v_{i})\cdot x_{i}+ (u_{i} \ast_{i} v_{i} )\ppr_{i} x_{i}
= u_{i}\ppr_{i}( v_{i}\ppr_{i} x_{i} + x_{i} \ppl_{i} v_{i})+ \omega_{i}(u_{i},v_{i}\trl_{i} x_{i}+ x_{i} \trr_{i} v_{i}),$
\item[(Z5)] $(u_{i} \ast_{i} v_{i})\trl_{i} x_{i} = u_{i}\trl_{i}( v_{i}\ppr_{i} x_{i}+x_{i}\ppl_{i} v_{i})+u_{i} \ast_{i}( v_{i}\trl_{i} x_{i} + x_{i} \trr_{i} v_{i} ),$
\item[(Z6)]$( x_{i}\cdot y_{i} )\ppl_{i} w_{i}  = x_{i}\cdot(y_{i} \ppl_{i} w_{i}+w_{i}\ppr_{i} y_{i})+x_{i}\ppl_{i}(y_{i} \trr_{i} w_{i}+w_{i}\trl_{i} y_{i}),$
\item[(Z7)]
$( x_{i} \ppl_{i} v_{i})\ppl_{i} w_{i}+\omega_{i}(x_{i} \trr_{i} v_{i},w_{i})
=x_{i}\cdot\big(\omega_{i}(v_{i}, w_{i})+\omega_{i}(w_{i},v_{i})\big)+x_{i}\ppl_{i}(v_{i} \ast_{i} w_{i} +w_{i} \ast_{i} v_{i} ),$
\item[(Z8)] $(x_{i} \ppl_{i} v_{i}) \trr_{i} w_{i}+(x_{i}\trr_{i} v_{i})\ast_{i} w_{i} = x_{i}\trr_{i}( v_{i} \ast_{i} w_{i}+w_{i} \ast_{i} v_{i} ),$
\item[(Z9)]
$( u_{i}\ppr_{i} x_{i})\ppl_{i} w_{i}+\omega_{i}( u_{i}\trl_{i} x_{i},w_{i})
= u_{i}\ppr_{i}( x_{i} \ppl_{i} w_{i}+w_{i}\ppr_{i} x_{i} )+\omega_{i}(u_{i}, w_{i}\trl_{i} x_{i} + x_{i}\trr_{i} w_{i}),$
\item[(Z10)]
$(u_{i}\ppr_{i} x_{i}) \trr_{i} w_{i}+ (u_{i}\trl_{i} x_{i})\ast_{i} w_{i}
= u_{i}\trl_{i}(x_{i} \ppl_{i} w_{i} + w_{i}\ppr_{i} x_{i} )+u_{i} \ast_{i}(x_{i} \trr_{i} w_{i} + w_{i}\trl_{i} x_{i}),$
\item[(Z11)]
$\omega_{i}(u_{i}, v_{i})\ppl_{i} w_{i}+\omega_{i}( u_{i} \ast_{i} v_{i} ,w_{i} )
=u_{i}\ppr_{i}\big(\omega_{i}(v_{i}, w_{i})+\omega_{i}(w_{i},v_{i})\big)+ \omega_{i}(u_{i},v_{i} \ast_{i} w_{i}+ w_{i} \ast_{i} v_{i} ),$
\item[(Z12)]
$\omega_{i}(u_{i}, v_{i}) \trr_{i} w_{i}+(u_{i} \ast_{i} v_{i})\ast_{i} w_{i}
=u_{i}\trl_{i} \big(\omega_{i}(v_{i}, w_{i})+\omega_{i}(w_{i},v_{i})\big)+u_{i} \ast_{i}(v_{i} \ast_{i} w_{i}+w_{i} \ast_{i} v_{i}),$
\item[(Z13)]
$(x_{0}\cdot x_{1})\ppl_{1}u_{1}=x_{0}\cdot(x_{1}\ppl_{1}u_{1}+u_{1}\ppr_{1}x_{1})+ x_{0}\ppl_{2}(x_{1} \trr_{1}u_{1}+u_{1}\trl_{1}x_{1}),$
\item[(Z14)]
$(x_{0} \ppl_{2}u_{1})\cdot x_{1}+( x_{0}\trr_{2}u_{1})\ppr_{1}x_{1}=x_{0}\cdot(u_{1}\ppr_{1}x_{1}+x_{1} \ppl_{1}u_{1})+x_{0}\ppl_{2}(u_{1}\trl_{1} x_{1}+x_{1}\trr_{1} u_{1}),$
\item[(Z15)]
$(x_{0} \ppl_{2} u_{1}) \ppl_{1} v_{1} + \omega_{1}( x_{0} \trr_{2} u_{1}, v_{1})=x_{0}\cdot (\omega_{1}(u_{1}, v_{1})+\omega_{1}(v_{1}, u_{1}))+x_{0} \ppl_{2} (u_{1} \ast_{1} v_{1}+v_{1} \ast_{1} u_{1}),$
\item[(Z16)] $(u_{0}\ppr_{2} x_{1})\cdot y_{1} + (u_{0}\trl_{2} x_{1})\ppr_{1} y_{1}=u_{0}\ppr_{2} ( x_{1}\cdot y_{1}+y_{1}\cdot x_{1}),$
\item[(Z17)]
$(u_{0}\ppr_{2} x_{1}) \ppl_{1}u_{1} + \omega_{1}(u_{0}\trl_{2} x_{1},u_{1})=u_{0}\ppr_{2} (x_{1} \ppl_{1}u_{1}+u_{1}\ppr_{1} x_{1})+ \omega_{2}(u_{0}, x_{1} \trr_{1}u_{1}+u_{1}\trl_{1} x_{1}),$
\item[(Z18)]
$\omega_{2}(u_{0}, u_{1})\cdot x_{1} + (u_{0} \ast_{2} u_{1})\ppr_{1} x_{1}=u_{0}\ppr_{2} (u_{1}\ppr_{1} x_{1}+x_{1} \ppl_{1} u_{1}) + \omega_{2}(u_{0},u_{1}\trl_{1} x_{1}+x_{1} \trr_{1} u_{1}),$
\item[(Z19)]
$\omega_{2}(u_{0}, u_{1}) \ppl_{1} v_{1} + \omega_{1}(u_{0} \ast_{2} u_{1}, v_{1})=u_{0}\ppr_{2} (\omega_{1}(u_{1}, v_{1})+\omega_{1}(v_{1}, u_{1}))+ \omega_{2}(u_{0},u_{1} \ast_{1} v_{1}+ v_{1} \ast_{1} u_{1}),$
\item[(Z20)] $(x_{0}\cdot x_{1}) \trr_{1} u_{1}=x_{0} \trr_{2} (x_{1} \trr_{1} u_{1}+u_{1}\trl_{1} x_{1}),$
\item[(Z21)] $( x_{0} \trr_{2} u_{1})\trl_{1} y_{1} =x_{0} \trr_{2} (u_{1}\trl_{1} y_{1}+y_{1} \trr_{1} u_{1}),$
\item[(Z22)] $(x_{0} \ppl_{2} u_{1}) \trr_{1} v_{1}+( x_{0} \trr_{2} u_{1}) \ast_{1} v_{1}=x_{0} \trr_{2} (u_{1} \ast_{1} v_{1}+v_{1} \ast_{1} u_{1}),$
\item[(Z23)] $(u_{0}\trl_{2} x_{1})\trl_{1} y_{1}=u_{0}\trl_{2} (x_{1}\cdot y_{1}+y_{1}\cdot x_{1}),$
\item[(Z24)]
$(u_{0}\ppr_{2} x_{1}) \trr_{1} u_{1} + (u_{0}\trl_{2} x_{1}) \ast_{1} u_{1}=u_{0}\trl_{2} (x_{1} \ppl_{1} u_{1}+u_{1}\ppr_{1} x_{1}) + u_{0} \ast_{2} (x_{1} \trr_{1} u_{1}+u_{1}\trl_{1} x_{1}),$
\item[(Z25)]
$(u_{0} \ast_{2} u_{1})\trl_{1} x_{1}=u_{0}\trl_{2} (u_{1}\ppr_{1} x_{1}+x_{1} \ppl_{1} u_{1}) + u_{0} \ast_{2} (u_{1}\trl_{1} x_{1}+x_{1} \trr_{1} u_{1}),$
\item[(Z26)]
$\omega_{2}(u_{0}, u_{1}) \trr_{1} v_{1} + (u_{0} \ast_{2} u_{1}) \ast_{1} v_{1}=u_{0}\trl_{2} (\omega_{1}(u_{1}, v_{1})+\omega_{1}(v_{1}, u_{1})) + u_{0} \ast_{2} (u_{1} \ast_{1} v_{1}+v_{1} \ast_{1} u_{1}) \big),$
\item[(Z27)]
$(x_{0} \ppl_{0} u_{0})\cdot x_{1}+ (x_{0} \trr_{0} u_{0})\ppr_{2} x_{1}=x_{0}\cdot (u_{0}\ppr_{2} x_{1}+x_{1} \ppl_{3} u_{0})+ x_{0} \ppl_{2} (u_{0}\trl_{2} x_{1}+x_{1} \trr_{3} u_{0}),$
\item[(Z28)]
$(u_{0}\ppr_{0} x_{0})\cdot x_{1}+ (u_{0}\trl_{0} x_{0})\ppr_{2} x_{1}=u_{0}\ppr_{2} (x_{0}\cdot x_{1}+x_{1}\cdot x_{0}),$
\item[(Z29)]
$\omega_{0}(u_{0}, v_{0})\cdot x_{1}+ (u_{0} \ast_{0} v_{0})\ppr_{2} x_{1}= u_{0}\ppr_{2} (v_{0}\ppr_{2} x_{1}+x_{1} \ppl_{3} v_{0})+ \omega_{2}(u_{0},v_{0}\trl_{2} x_{1}+x_{1} \trr_{3} v_{0}),$
\item[(Z30)]
$(x_{0}\cdot y_{0}) \ppl_{2} u_{1}=x_{0}\cdot (y_{0} \ppl_{2} u_{1}+u_{1}\ppr_{3} y_{0})+x_{0} \ppl_{2} (y_{0} \trr_{2} u_{1}+u_{1}\trl_{3} y_{0}),$
\item[(Z31)]
$(x_{0} \ppl_{0} u_{0}) \ppl_{2} u_{1}+ \omega_{2}(x_{0} \trr_{0} u_{0}, u_{1})=x_{0}\cdot (\omega_{2}(u_{0}, u_{1})+ \omega_{3}(u_{1}, u_{0})) + x_{0} \ppl_{2} (u_{0} \ast_{2} u_{1}+u_{1} \ast_{3} u_{0}),$
\item[(Z32)]
$(u_{0}\ppr_{0} x_{0}) \ppl_{2} u_{1}+ \omega_{2}(u_{0}\trl_{0} x_{0}, u_{1})= u_{0}\ppr_{2} (x_{0} \ppl_{2} u_{1}+u_{1}\ppr_{3} x_{0}) + \omega_{2}(u_{0}, x_{0} \trr_{2} u_{1}+u_{1}\trl_{3} x_{0}),$
\item[(Z33)]
$\omega_{0}(u_{0}, v_{0}) \ppl_{2} u_{1}+ \omega_{2}(u_{0} \ast_{0} v_{0}, u_{1})= u_{0}\ppr_{2} (\omega_{2}(v_{0}, u_{1}) + \omega_{3}(u_{1}, v_{0}))+ \omega_{2}(u_{0},v_{0} \ast_{2} u_{1}+u_{1} \ast_{3} v_{0}),$
\item[(Z34)]
$(x_{0}\cdot y_{0}) \trr_{2} u_{1}=x_{0} \trr_{2} (y_{0} \trr_{2} u_{1}+u_{1}\trl_{3} y_{0}),$
\item[(Z35)]
$(x_{0} \ppl_{0} u_{0}) \trr_{2} u_{1} + (x_{0} \trr_{0} u_{0}) \ast_{2} u_{1}=x_{0} \trr_{2} (u_{0} \ast_{2} u_{1}+u_{1} \ast_{3} u_{0}),$
\item[(Z36)]
$(u_{0}\ppr_{0} x_{0}) \trr_{2} u_{1}+ (u_{0}\trl_{0} x_{0}) \ast_{2} u_{1}=u_{0}\trl_{2} (x_{0} \ppl_{2} u_{1}+u_{1}\ppr_{3} x_{0})+ u_{0} \ast_{2} (x_{0} \trr_{2} u_{1}+u_{1}\trl_{3} x_{0}),$
\item[(Z37)]
$\omega_{0}(u_{0}, v_{0}) \trr_{2} u_{1}+ (u_{0} \ast_{0} v_{0}) \ast_{2} u_{1}=u_{0}\trl_{2} (\omega_{2}(v_{0}, u_{1})+ \omega_{3}(u_{1}, v_{0})) + u_{0} \ast_{2} (v_{0} \ast_{2} u_{1}+u_{1} \ast_{3} v_{0}),$
\item[(Z38)]
$(x_{0} \trr_{0} u_{0})\trl_{2} x_{1}=x_{0} \trr_{2} (u_{0}\trl_{2} x_{1}+ x_{1} \trr_{3} u_{0}),$
\item[(Z39)]
$(u_{0}\trl_{0} x_{0})\trl_{2} x_{1}=u_{0}\trl_{2} (x_{0}\cdot x_{1}+x_{1}\cdot x_{0}),$
\item[(Z40)]
$(u_{0} \ast_{0} v_{0})\trl_{2} x_{1}=u_{0}\trl_{2} (v_{0}\ppr_{2} x_{1}+x_{1} \ppl_{3} v_{0})+ u_{0} \ast_{2} (v_{0}\trl_{2} x_{1}+x_{1} \trr_{3} v_{0}),$
\item[(Z41)]
$(x_{1} \ppl_{3} u_{0})\cdot x_{0}+ (x_{1} \trr_{3} u_{0})\ppr_{3} x_{0}=x_{1}\cdot (u_{0}\ppr_{0} x_{0}+x_{0} \ppl_{0} u_{0}) + x_{1} \ppl_{3} (u_{0}\trl_{0} x_{0}+x_{0} \trr_{0} u_{0}),$
\item[(Z42)]
$(u_{1}\ppr_{3} x_{0})\cdot y_{0} + (u_{1}\trl_{3} x_{0})\ppr_{3} y_{0}=u_{1}\ppr_{3} (x_{0}\cdot y_{0}+y_{0}\cdot x_{0}),$
\item[(Z43)]
$\omega_{3}(u_{1}, u_{0})\cdot x_{0} + (u_{1} \ast_{3} u_{0})\ppr_{3} x_{0}=u_{1}\ppr_{3} (u_{0}\ppr_{0} x_{0}+x_{0} \ppl_{0} u_{0})+ \omega_{3}(u_{1},u_{0}\trl_{0} x_{0}+x_{0} \trr_{0} u_{0}),$
\item[(Z44)]
$(x_{1}\cdot x_{0}) \ppl_{3} u_{0}=x_{1}\cdot (x_{0} \ppl_{0} u_{0}+u_{0}\ppr_{0} x_{0}) + x_{1} \ppl_{3} (x_{0} \trr_{0} u_{0}+u_{0}\trl_{0} x_{0}),$
\item[(Z45)]
$(x_{1} \ppl_{3} u_{0}) \ppl_{3} v_{0}+ \omega_{3}(x_{1} \trr_{3} u_{0}, v_{0})=x_{1}\cdot (\omega_{0}(u_{0}, v_{0})+\omega_{0}(v_{0}, u_{0}))+ x_{1} \ppl_{3} (u_{0} \ast_{0} v_{0}+v_{0} \ast_{0} u_{0}),$
\item[(Z46)]
$(u_{1}\ppr_{3} x_{0}) \ppl_{3} u_{0} + \omega_{3}(u_{1}\trl_{3} x_{0}, u_{0})=u_{1}\ppr_{3} (x_{0} \ppl_{0} u_{0}+u_{0}\ppr_{0} x_{0}) + \omega_{3}(u_{1}, x_{0} \trr_{0} u_{0}+ u_{0}\trl_{0} x_{0}),$
\item[(Z47)]
$\omega_{3}(u_{1}, u_{0}) \ppl_{3} v_{0} + \omega_{3}(u_{1} \ast_{3} u_{0}, v_{0})=u_{1}\ppr_{3} (\omega_{0}(u_{0}, v_{0})+ \omega_{0}(v_{0}, u_{0})) + \omega_{3}(u_{1},u_{0} \ast_{0} v_{0}+v_{0} \ast_{0} u_{0}),$
\item[(Z48)]
$(x_{1}\cdot x_{0}) \trr_{3} u_{0}=x_{1} \trr_{3} (x_{0} \trr_{0} u_{0}+u_{0}\trl_{0} x_{0}),$
\item[(Z49)]
$(x_{1} \ppl_{3} u_{0}) \trr_{3} v_{0}+ (x_{1} \trr_{3} u_{0}) \ast_{3} v_{0}=x_{1} \trr_{3} (u_{0} \ast_{0} v_{0}+v_{0} \ast_{0} u_{0}),$
\item[(Z50)]
$(u_{1}\ppr_{3} x_{0}) \trr_{3} u_{0}+ (u_{1}\trl_{3} x_{0}) \ast_{3} u_{0}=u_{1}\trl_{3} (x_{0} \ppl_{0} u_{0}+u_{0}\ppr_{0} x_{0})+ u_{1} \ast_{3} (x_{0} \trr_{0} u_{0}+u_{0}\trl_{0} x_{0}),$
\item[(Z51)]
$\omega_{3}(u_{1}, u_{0}) \trr_{3} v_{0} + (u_{1} \ast_{3} u_{0}) \ast_{3} v_{0}=u_{1}\trl_{3} (\omega_{0}(u_{0}, v_{0}) + \omega_{0}(v_{0}, u_{0}))+ u_{1} \ast_{3} (u_{0} \ast_{0} v_{0}+v_{0} \ast_{0} u_{0}),$
\item[(Z52)]
$(x_{1} \trr_{3} u_{0})\trl_{3} x_{0}=x_{1} \trr_{3} (u_{0}\trl_{0} x_{0}+x_{0} \trr_{0} u_{0}),$
\item[(Z53)]
$(u_{1}\trl_{3} x_{0})\trl_{3} y_{0}=u_{1}\trl_{3} (x_{0}\cdot y_{0}+y_{0}\cdot x_{0}),$
\item[(Z54)]
$(u_{1} \ast_{3} u_{0})\trl_{3} x_{0}=u_{1}\trl_{3} (u_{0}\ppr_{0} x_{0}+x_{0} \ppl_{0} u_{0})+ u_{1} \ast_{3} (u_{0}\trl_{0} x_{0}+x_{0} \trr_{0} u_{0}),$
\item[(Z55)]
$(x_{0} \ppl_{2} u_{1})\cdot y_{0}+ (x_{0} \trr_{2} u_{1})\ppr_{3} y_{0}=x_{0}\cdot (u_{1}\ppr_{3} y_{0}+y_{0} \ppl_{2} u_{1})+ x_{0} \ppl_{2} (u_{1}\trl_{3} y_{0}+y_{0} \trr_{2} u_{1}),$
\item[(Z56)]
$(u_{0}\ppr_{2} x_{1})\cdot y_{0}+ (u_{0}\trl_{2} x_{1})\ppr_{3} y_{0}= u_{0}\ppr_{2} (x_{1}\cdot y_{0}+y_{0}\cdot x_{1}),$
\item[(Z57)]
$\omega_{2}(u_{0}, u_{1})\cdot x_{0}+ (u_{0} \ast_{2} u_{1})\ppr_{3} x_{0}= u_{0}\ppr_{2} (u_{1}\ppr_{3} x_{0}+x_{0} \ppl_{2} u_{1})+ \omega_{2}(u_{0}, u_{1}\trl_{3} x_{0}+ x_{0} \trr_{2} u_{1}),$
\item[(Z58)]
$(x_{0}\cdot x_{1}) \ppl_{3} u_{0}=x_{0}\cdot (x_{1} \ppl_{3} u_{0}+u_{0}\ppr_{2} x_{1})+ x_{0} \ppl_{2} (x_{1} \trr_{3} u_{0}+u_{0}\trl_{2} x_{1}),$
\item[(Z59)]
$(x_{0} \ppl_{2} u_{1}) \ppl_{3} v_{0}+ \omega_{3}(x_{0} \trr_{2} u_{1}, v_{0})=x_{0}\cdot (\omega_{3}(u_{1}, v_{0})+ \omega_{2}(v_{0}, u_{1}))+ x_{0} \ppl_{2} (u_{1} \ast_{3} v_{0}+v_{0} \ast_{2} u_{1}),$
\item[(Z60)]
$(u_{0}\ppr_{2} x_{1}) \ppl_{3} v_{0}+ \omega_{3}(u_{0}\trl_{2} x_{1}, v_{0})= u_{0}\ppr_{2} (x_{1} \ppl_{3} v_{0}+v_{0}\ppr_{2} x_{1})+ \omega_{2}(u_{0}, x_{1} \trr_{3} v_{0}+v_{0}\trl_{2} x_{1}) ,$
\item[(Z61)]
$\omega_{2}(u_{0}, u_{1}) \ppl_{3} v_{0}+ \omega_{3}(u_{0} \ast_{2} u_{1}, v_{0})=u_{0}\ppr_{2} (\omega_{3}(u_{1}, v_{0})+\omega_{2}(v_{0}, u_{1}))+ \omega_{2}(u_{0}, u_{1} \ast_{3} v_{0}+ v_{0} \ast_{2} u_{1}),$
\item[(Z62)]
$(x_{0}\cdot x_{1}) \trr_{3} u_{0}=x_{0} \trr_{2} (x_{1} \trr_{3} u_{0}+u_{0}\trl_{2} x_{1}),$
\item[(Z63)]
$(x_{0} \ppl_{2} u_{1}) \trr_{3} v_{0}+ (x_{0} \trr_{2} u_{1}) \ast_{3} v_{0}=x_{0} \trr_{2} (u_{1} \ast_{3} v_{0}+v_{0} \ast_{2} u_{1}),$
\item[(Z64)]
$(u_{0}\ppr_{2} x_{1}) \trr_{3} v_{0}+ (u_{0}\trl_{2} x_{1}) \ast_{3} v_{0}=u_{0}\trl_{2} (x_{1} \ppl_{3} v_{0}+v_{0}\ppr_{2} x_{1})+ u_{0} \ast_{2} (x_{1} \trr_{3} v_{0}+v_{0}\trl_{2} x_{1}),$
\item[(Z65)]
$ \omega_{2}(u_{0}, u_{1}) \trr_{3} v_{0}+ (u_{0} \ast_{2} u_{1}) \ast_{3} v_{0}=u_{0}\trl_{2} (\omega_{3}(u_{1}, v_{0})+ \omega_{2}(v_{0}, u_{1}))+ u_{0} \ast_{2} (u_{1} \ast_{3} v_{0}+v_{0} \ast_{2} u_{1}),$
\item[(Z66)]
$(x_{0} \trr_{2} u_{1})\trl_{3} y_{0}=x_{0} \trr_{2} (u_{1}\trl_{3} y_{0}+y_{0} \trr_{2} u_{1}),$
\item[(Z67)]
$(u_{0}\trl_{2} x_{1})\trl_{3} y_{0}=u_{0}\trl_{2} (x_{1}\cdot y_{0}+y_{0}\cdot x_{1}),$
\item[(Z68)]
$(u_{0} \ast_{2} u_{1})\trl_{3} y_{0}=u_{0}\trl_{2} (u_{1}\ppr_{3} y_{0}+y_{0} \ppl_{2} u_{1})+ u_{0} \ast_{2} (u_{1}\trl_{3} y_{0}+y_{0} \trr_{2} u_{1}),$
\item[(Z69)]
$(x_{1} \ppl_{3} u_{0})\cdot y_{1}+ (x_{1} \trr_{3} u_{0})\ppr_{1} y_{1}=x_{1}\cdot (u_{0}\ppr_{2} y_{1}+y_{1} \ppl_{3} u_{0})+ x_{1} \ppl_{1} (u_{0}\trl_{2} y_{1}+y_{1} \trr_{3} u_{0}),$
\item[(Z70)]
$(u_{1}\ppr_{3} x_{0})\cdot x_{1} + (u_{1}\trl_{3} x_{0})\ppr_{1} x_{1}=u_{1}\ppr_{1} (x_{0}\cdot x_{1}+x_{1}\cdot x_{0}),$
\item[(Z71)]
$\omega_{3}(u_{1}, u_{0})\cdot x_{1}+ (u_{1} \ast_{3} u_{0})\ppr_{1} x_{1}=u_{1}\ppr_{1} (u_{0}\ppr_{2} x_{1}+x_{1} \ppl_{3} u_{0})+ \omega_{1}(u_{1}, u_{0}\trl_{2} x_{1}+ x_{1} \trr_{3} u_{0}),$
\item[(Z72)]
$(x_{1}\cdot x_{0}) \ppl_{1} u_{1}=x_{1}\cdot (x_{0} \ppl_{2} u_{1}+u_{1}\ppr_{3} x_{0})+x_{1} \ppl_{1} (x_{0} \trr_{2} u_{1}+u_{1}\trl_{3} x_{0}),$
\item[(Z73)]
$(x_{1} \ppl_{3} u_{0}) \ppl_{1} u_{1}+ \omega_{1}(x_{1} \trr_{3} u_{0}, u_{1})=x_{1}\cdot (\omega_{2}(u_{0}, u_{1})+ \omega_{3}(u_{1}, u_{0})) + x_{1} \ppl_{1} (u_{0} \ast_{2} u_{1}+u_{1} \ast_{3} u_{0}) ,$
\item[(Z74)]
$(u_{1}\ppr_{3} x_{0}) \ppl_{1} v_{1} + \omega_{1}(u_{1}\trl_{3} x_{0}, v_{1})=u_{1}\ppr_{1} (x_{0} \ppl_{2} v_{1}+v_{1}\ppr_{3} x_{0}) + \omega_{1}(u_{1}, x_{0} \trr_{2} v_{1}+ v_{1}\trl_{3} x_{0}),$
\item[(Z75)]
$\omega_{3}(u_{1}, u_{0}) \ppl_{1} v_{1}+ \omega_{1}(u_{1} \ast_{3} u_{0}, v_{1})=u_{1}\ppr_{1} (\omega_{2}(u_{0}, v_{1}) + \omega_{3}(v_{1}, u_{0})) + \omega_{1}(u_{1}, u_{0} \ast_{2} v_{1}+v_{1} \ast_{3} u_{0}),$
\item[(Z76)]
$(x_{1}\cdot x_{0}) \trr_{1} u_{1}=x_{1} \trr_{1} (x_{0} \trr_{2} u_{1}+u_{1}\trl_{3} x_{0}),$
\item[(Z77)]
$(x_{1} \ppl_{3} u_{0}) \trr_{1} u_{1}+ (x_{1} \trr_{3} u_{0}) \ast_{1} u_{1}=x_{1} \trr_{1} (u_{0} \ast_{2} u_{1}+u_{1} \ast_{3} u_{0}),$
\item[(Z78)]
$(u_{1}\ppr_{3} x_{0}) \trr_{1} v_{1} + (u_{1}\trl_{3} x_{0}) \ast_{1} v_{1}=u_{1}\trl_{1} (x_{0} \ppl_{2} v_{1}+v_{1}\ppr_{3} x_{0})+ u_{1} \ast_{1} (x_{0} \trr_{2} v_{1}+v_{1}\trl_{3} x_{0}),$
\item[(Z79)]
$\omega_{3}(u_{1}, u_{0}) \trr_{1} v_{1} + (u_{1} \ast_{3} u_{0}) \ast_{1} v_{1}= u_{1}\trl_{1} (\omega_{2}(u_{0}, v_{1})+ \omega_{3}(v_{1}, u_{0}))+ u_{1} \ast_{1} (u_{0} \ast_{2} v_{1}+v_{1} \ast_{3} u_{0}),$
\item[(Z80)]
$(x_{1} \trr_{3} u_{0})\trl_{1} y_{1}=x_{1} \trr_{1} (u_{0}\trl_{2} y_{1}+y_{1} \trr_{3} u_{0}),$
\item[(Z81)]
$(u_{1}\trl_{3} x_{0})\trl_{1} x_{1}=u_{1}\trl_{1} (x_{0}\cdot x_{1}+x_{1}\cdot x_{0}),$
\item[(Z82)]
$(u_{1} \ast_{3} u_{0})\trl_{1} x_{1}=u_{1}\trl_{1} (u_{0}\ppr_{2} x_{1}+x_{1} \ppl_{3} u_{0})+ u_{1} \ast_{1} (u_{0}\trl_{2} x_{1}+x_{1} \trr_{3} u_{0}) ,$

\item[(Z83)]
$(x_{1} \ppl_{1} u_{1})\cdot x_{0}+ (x_{1} \trr_{1} u_{1})\ppr_{3} x_{0}=x_{1}\cdot (u_{1}\ppr_{3} x_{0}+x_{0} \ppl_{2} u_{1})+ x_{1} \ppl_{1} (u_{1}\trl_{3} x_{0}+x_{0} \trr_{2} u_{1}),$
\item[(Z84)]
$(u_{1}\ppr_{1} x_{1})\cdot x_{0} + (u_{1}\trl_{1} x_{1})\ppr_{3} x_{0}= u_{1}\ppr_{1} (x_{1}\cdot x_{0}+x_{0}\cdot x_{1}),$
\item[(Z85)]
$\omega_{1}(u_{1}, v_{1})\cdot x_{0} + (u_{1} \ast_{1} v_{1})\ppr_{3} x_{0}=u_{1}\ppr_{1} (v_{1}\ppr_{3} x_{0}+x_{0} \ppl_{2} v_{1})+ \omega_{1}(u_{1},v_{1}\trl_{3} x_{0}+x_{0} \trr_{2} v_{1}),$
\item[(Z86)]
$(x_{1}\cdot y_{1}) \ppl_{3} u_{0}=x_{1}\cdot (y_{1} \ppl_{3} u_{0}+u_{0}\ppr_{2} y_{1})+ x_{1} \ppl_{1} (y_{1} \trr_{3} u_{0}+u_{0}\trl_{2} y_{1}),$
\item[(Z87)]
$(x_{1} \ppl_{1} u_{1}) \ppl_{3} u_{0}+ \omega_{3}(x_{1} \trr_{1} u_{1}, u_{0})=x_{1}\cdot (\omega_{3}(u_{1}, u_{0})+\omega_{2}(u_{0}, u_{1}))+ x_{1} \ppl_{1} (u_{1} \ast_{3} u_{0}+u_{0} \ast_{2} u_{1}),$
\item[(Z88)]
$(u_{1}\ppr_{1} x_{1}) \ppl_{3} u_{0} + \omega_{3}(u_{1}\trl_{1} x_{1}, u_{0})= u_{1}\ppr_{1} (x_{1} \ppl_{3} u_{0}+u_{0}\ppr_{2} x_{1}) + \omega_{1}(u_{1}, x_{1} \trr_{3} u_{0}+u_{0}\trl_{2} x_{1}),$
\item[(Z89)]
$\omega_{1}(u_{1}, v_{1}) \ppl_{3} u_{0}+ \omega_{3}(u_{1} \ast_{1} v_{1}, u_{0})=u_{1}\ppr_{1} (\omega_{3}(v_{1}, u_{0})+\omega_{2}(u_{0}, v_{1})) + \omega_{1}(u_{1},v_{1} \ast_{3} u_{0}+ u_{0} \ast_{2} v_{1}),$
\item[(Z90)]
$(x_{1}\cdot y_{1}) \trr_{3} u_{0}=x_{1} \trr_{1} (y_{1} \trr_{3} u_{0}+u_{0}\trl_{2} y_{1}),$
\item[(Z91)]
$(x_{1} \ppl_{1} u_{1}) \trr_{3} u_{0}+ (x_{1} \trr_{1} u_{1}) \ast_{3} u_{0}=x_{1} \trr_{1} (u_{1} \ast_{3} u_{0}+u_{0} \ast_{2} u_{1}),$
\item[(Z92)]
$(u_{1}\ppr_{1} x_{1}) \trr_{3} u_{0} + (u_{1}\trl_{1} x_{1}) \ast_{3} u_{0}=u_{1}\trl_{1} (x_{1} \ppl_{3} u_{0}+u_{0}\ppr_{2} x_{1})+ u_{1} \ast_{1} (x_{1} \trr_{3} u_{0}+u_{0}\trl_{2} x_{1}),$
\item[(Z93)]
$\omega_{1}(u_{1}, v_{1}) \trr_{3} u_{0} + (u_{1} \ast_{1} v_{1}) \ast_{3} u_{0}=u_{1}\trl_{1} (\omega_{3}(v_{1}, u_{0})+\omega_{2}(u_{0}, v_{1}))+ u_{1} \ast_{1} (v_{1} \ast_{3} u_{0}+u_{0} \ast_{2} v_{1}),$
\item[(Z94)]
$(x_{1} \trr_{1} u_{1})\trl_{3} x_{0}=x_{1} \trr_{1} (u_{1}\trl_{3} x_{0}+x_{0} \trr_{2} u_{1}),$
\item[(Z95)]
$(u_{1}\trl_{1} x_{1})\trl_{3} x_{0}=u_{1}\trl_{1} (x_{1}\cdot x_{0}+x_{0}\cdot x_{1}),$
\item[(Z96)]
$(u_{1} \ast_{1} v_{1})\trl_{3} x_{0}=u_{1}\trl_{1} (v_{1}\ppr_{3} x_{0}+x_{0} \ppl_{2} v_{1})+ u_{1} \ast_{1} (v_{1}\trl_{3} x_{0}+x_{0} \trr_{2} v_{1}),$
\item[(Z97)]
$\varphi(x_{0} \ppl_{2} u_{1})+\sigma(x_{0} \trr_{2} u_{1})=x_{0}\cdot \sigma(u_{1}) + x_{0} \ppl_{0} d(u_{1}),$
\item[(Z98)]
$\varphi(u_{0}\ppr_{2} x_{1})+ \sigma(u_{0}\trl_{2} x_{1})=u_{0}\ppr_{0} \varphi(x_{1}),$
\item[(Z99)]
$\varphi\omega_{2}(u_{0}, u_{1}) + \sigma(u_{0} \ast_{2} u_{1})=u_{0}\ppr_{0} \sigma(u_{1}) + \omega_{0}(u_{0}, d(u_{1})),$
\item[(Z100)]
$d(x_{0} \trr_{2} u_{1})=x_{0} \trr_{0} d(u_{1}),$
\item[(Z101)]
$d(u_{0}\trl_{2} x_{1})= u_{0}\trl_{0} \varphi(x_{1}),$
\item[(Z102)]
$d(u_{0} \ast_{2} u_{1})= u_{0}\trl_{0}\sigma(u_{1}) + u_{0} \ast_{0} d(u_{1}),$
\item[(Z103)]$\varphi(x_{1} \ppl_{3} u_{0})+\sigma(x_{1} \trr_{3} u_{0})=\varphi(x_{1})\ppl_{0} u_{0},$
\item[(Z104)]$\varphi(u_{1}\ppr_{3} x_{0}) + \sigma(u_{1}\trl_{3} x_{0})=\sigma(u_{1})\cdot x_{0}+ d(u_{1})\ppr_{0} x_{0},$
\item[(Z105)]$\varphi\omega_{3}(u_{1}, u_{0}) + \sigma(u_{1} \ast_{3} u_{0})=\sigma(u_{1}) \ppl_{0} u_{0}+ \omega_{0}(d(u_{1}), u_{0}),$
\item[(Z106)]$d(x_{1} \trr_{3} u_{0})=\varphi(x_{1})\trr_{0} u_{0},$
\item[(Z107)]$d(u_{1}\trl_{3} x_{0}) =d(u_{1})\trl_{0} x_{0},$
\item[(Z108)]$d(u_{1} \ast_{3} u_{0}))=\sigma(u_{1}) \trr_{0} u_{0} + d(u_{1}) \ast_{0} u_{0},$
\item[(Z109)]$\sigma(u_{1})\cdot x_{1}+ d(u_{1})\ppr_{2} x_{1}=u_{1}\ppr_{1} x_{1},$
\item[(Z110)]$\varphi(x_{1})\ppl_{2} u_{1}=x_{1} \ppl_{1} u_{1},$
\item[(Z111)]$\sigma(u_{1}) \ppl_{2} v_{1} + \omega_{2}(d(u_{1}), v_{1})=\omega_{1}(u_{1}, v_{1}),$
\item[(Z112)]$\varphi(x_{1})\trr_{2} u_{1}=x_{1} \trr_{1} u_{1},$
\item[(Z113)]$\sigma(u_{1}) \trr_{2} v_{1} + d(u_{1}) \ast_{2} v_{1}=u_{1} \ast_{1} v_{1},$
\item[(Z114)]$d(u_{1})\trl_{2} x_{1} =u_{1}\trl_{1} x_{1},$
\item[(Z115)]$x_{1}\cdot \sigma(u_{1})+x_{1} \ppl_{3} d(u_{1})=x_{1} \ppl_{1} u_{1},$
\item[(Z116)]$u_{1}\ppr_{3} \varphi(x_{1})=u_{1}\ppr_{1} x_{1},$
\item[(Z117)]$u_{1}\ppr_{3}\sigma(v_{1}) + \omega_{3}(u_{1}, d(v_{1}))=\omega_{1}(u_{1}, v_{1}),$
\item[(Z118)]$x_{1} \trr_{3} d(u_{1})=x_{1} \trr_{1} u_{1},$
\item[(Z119)]$u_{1}\trl_{3} \varphi(x_{1})=u_{1}\trl_{1} x_{1},$
\item[(Z120)]$u_{1}\trl_{3}\sigma(v_{1}) + u_{1} \ast_{3} d(v_{1})=u_{1} \ast_{1} v_{1}.$
\end{enumerate}
\end{theorem}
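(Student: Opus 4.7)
The plan is to verify each defining axiom of a Zinbiel 2-algebra for the proposed structure on $(E_1, E_0, \varphi_E)$ by substituting the explicit formulas for $\circ_i$, $\triangleright$, $\triangleleft$ and $\varphi_E$ from Definition \ref{def:01}, then splitting every element along the direct-sum decomposition $E_i = Z_i \oplus V_i$ and projecting each resulting identity onto its $Z_i$ and $V_i$ components. Because each axiom involves at most three inputs and the operations are linear in each, a systematic bookkeeping yields a finite list of scalar equations; the content of the theorem is that, after discarding the identities that follow tautologically from $(Z_1,Z_0,\varphi)$ being a Zinbiel 2-algebra, this list is precisely (Z1)--(Z120).

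First I would treat the Zinbiel identity $(X \circ_i Y) \circ_i Z = X \circ_i (Y \circ_i Z + Z \circ_i Y)$ on $E_i$ for $i = 0,1$. With $X = (x_i, u_i)$, $Y = (y_i, v_i)$, $Z = (z_i, w_i)$, the case where all three arguments lie in $Z_i$ is automatic from the Zinbiel axiom on $Z_i$, while each of the seven remaining placement patterns splits, after projecting onto $Z_i$ and $V_i$, into one of the blocks encoded in (Z1)--(Z12). Next I would handle the bimodule axioms and Zinbiel-action axioms for $\triangleright : E_0 \times E_1 \to E_1$ and $\triangleleft : E_1 \times E_0 \to E_1$; these are the six identities listed in the preliminaries for a Zinbiel algebra acting on another Zinbiel algebra, now applied to $(E_0, E_1)$. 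The same placement analysis breaks them into conditions (Z13)--(Z96), sorted according to how many of the arguments come from $V_0$ or $V_1$ versus $Z_0$ or $Z_1$.

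Finally, for the three crossed-module conditions $\varphi_E(X \triangleright Y) = X \cdot \varphi_E(Y)$, $\varphi_E(Y \triangleleft X) = \varphi_E(Y) \cdot X$, and $\varphi_E(X) \triangleright Y = X \cdot Y = X \triangleleft \varphi_E(Y)$, substituting $\varphi_E(x, u) = (\varphi(x) + \sigma(u), d(u))$ and decomposing by argument placement yields (Z97)--(Z108) from the first two equations (covering the $Z_0$- and $V_0$-components separately) and (Z109)--(Z120) from the two halves of the third. The converse is then immediate: each listed condition is itself the projection onto $Z_i$ or $V_i$ of one of these axioms, so the conditions together are both necessary and sufficient.

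The proof is thus a lengthy but mechanical verification. The main obstacle is organizational rather than conceptual: there are three-variable axioms, each producing up to eight placement cases, each further splitting into two output components, and the challenge is to lay out this bookkeeping in a way that makes the identification with the specific numbered conditions transparent. In practice one displays one or two representative cases in full and asserts the rest by the symmetry of the splitting, since the direct computations are straightforward once the formulas for $\circ_i$, $\triangleright$, $\triangleleft$ and $\varphi_E$ have been expanded and the $Z_i$/$V_i$ projections taken.
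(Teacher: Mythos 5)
Your proposal is correct and follows essentially the same route as the paper: expand the defining operations on $E_1=Z_1\oplus V_1$, $E_0=Z_0\oplus V_0$, split each Zinbiel, action, and crossed-module axiom by the placement of arguments in $Z_i$ versus $V_i$, and project onto the two components to recover (Z1)--(Z12), (Z13)--(Z96), and (Z97)--(Z120) respectively. The paper likewise reduces (Z1)--(Z12) to the known Zinbiel-algebra case and then verifies the six action identities and three crossed-module identities by the same expansion-and-projection bookkeeping.
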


\begin{proof}
By direct computations, $Z_{i}\oplus V_{i}$ is a Zinbiel algebra if and only if (Z1)--(Z12) hold, see \cite{ZZ21} for details.
Thus, $(E_1,E_0,\varphi_E)$ is a Zinbiel 2-algebra if and only if
$\trr_{}: (Z_{0}\oplus V_{0})\oplus (Z_{1}\oplus V_{1})\rightarrow Z_{1}\oplus V_{1}$,
and
$\trl_{}: (Z_{1}\oplus V_{1})\oplus (Z_{0}\oplus V_{0})\rightarrow Z_{1}\oplus V_{1}$
is Zinbiel algebra action, and $\varphi_E: Z_{1}\oplus V_{1}\rightarrow Z_{0}\oplus V_{0}$ satisfying the following conditions:
\begin{enumerate}
\item[(1)] \begin{eqnarray*}
&&((x_{0},u_{0})\trr_{}(x_{1},u_{1}))\circ_{1}(y_{1},v_{1})\\
&=&(x_{0},u_{0})\trr_{}((x_{1},u_{1})\circ_{1}(y_{1},v_{1}) + (y_{1},v_{1})\circ_{1}(x_{1},u_{1}) ),
\end{eqnarray*}
\item[(2)] \begin{eqnarray*}
&&((x_{0},u_{0})\circ_{0}(y_{0},v_{0}))\trr_{}(x_{1},u_{1})\\
&=&(x_{0},u_{0})\trr_{}((y_{0},v_{0})\trr_{}(x_{1},u_{1})+(x_{1},u_{1})\trl_{}(y_{0},v_{0})),
\end{eqnarray*}
\item[(3)] \begin{eqnarray*}
&&((x_{1},u_{1})\trl_{}(x_{0},u_{0}))\trl_{}(y_{0},v_{0})\\
&=&(x_{1},u_{1})\trl_{}((x_{0},u_{0})\circ_{0}(y_{0},v_{0})+(y_{0},v_{0})\circ_{0}(x_{0},u_{0})),
\end{eqnarray*}
\item[(4)] \begin{eqnarray*}
&&((x_{0},u_{0})\trr_{}(x_{1},u_{1}))\trl_{}(y_{0},v_{0})\\
&=&(x_{0},u_{0})\trr_{}((x_{1},u_{1})\trl_{}(y_{0},v_{0})+((y_{0},v_{0})\trr_{}(x_{1},u_{1})),
\end{eqnarray*}
\item[(5)] \begin{eqnarray*}
&&((x_{1},u_{1})\trl_{}(x_{0},u_{0}))\circ_{1}(y_{1},v_{1})\\
&=&(x_{1},u_{1})\circ_{1}((x_{0},u_{0})\trr_{}(y_{1},v_{1})+(y_{1},v_{1})\trl_{}(x_{0},u_{0})),
\end{eqnarray*}
\item[(6)] \begin{eqnarray*}
&&((x_{1},u_{1})\circ_{1}(y_{1},v_{1}))\trl_{}(x_{0},u_{0})\\
&=&(x_{1},u_{1})\circ_{1}((y_{1},v_{1})\trl_{}(x_{0},u_{0})+(x_{0},u_{0})\trr_{}(y_{1},v_{1})),
\end{eqnarray*}
\item[(7)] $$\varphi_E((x_{0},u_{0})\trr (x_{1},u_{1})) = (x_{0},u_{0})\circ_{0}\varphi_E(x_{1},u_{1}),$$
\item[(8)] $$\varphi_E((x_{1},u_{1})\trl (x_{0},u_{0})) = \varphi_E(x_{1},u_{1})\circ_{0}(x_{0},u_{0}),$$
\item[(9)] $$\varphi_E(x_{1},u_{1})\trr (y_{1},v_{1}) = (x_{1},u_{1})\circ_{1}(y_{1},v_{1})
=(x_{1},u_{1})\trl \varphi_E(y_{1},v_{1}),$$
\end{enumerate}
for all  $x_{i}, y_{i} \in Z_{i},$ $u_{i}, v_{i} \in V_{i}$, $i = 0,1$.
%

For equation $(1)$, the left hand side is equal to
\begin{eqnarray*}
&& ((x_{0},u_{0})\trr_{}(x_{1},u_{1}))\circ_{1}(y_{1},v_{1})\\
&=&\big( x_{0}\cdot x_{1} + x_{0} \ppl_{2} u_{1} + u_{0}\ppr_{2} x_{1} + \omega_{2}(u_{0}, u_{1}),\ \\
&& x_{0} \trr_{2} u_{1} + u_{0}\trl_{2} x_{1} + u_{0} \ast_{2} u_{1} \big)\circ_{1}(y_{1},v_{1})\\
&=&\big( (x_{0}\cdot x_{1})\cdot y_{1} + (x_{0} \ppl_{2} u_{1})\cdot y_{1} + (u_{0}\ppr_{2} x_{1})\cdot y_{1} + \omega_{2}(u_{0}, u_{1})\cdot y_{1} \\
&&+ (x_{0}\cdot x_{1}) \ppl_{1} v_{1} + (x_{0} \ppl_{2} u_{1}) \ppl_{1} v_{1} + (u_{0}\ppr_{2} x_{1}) \ppl_{1} v_{1} + \omega_{2}(u_{0}, u_{1}) \ppl_{1} v_{1} \\
&&+ ( x_{0} \trr_{2} u_{1})\ppr_{1} y_{1} + (u_{0}\trl_{2} x_{1})\ppr_{1} y_{1} + (u_{0} \ast_{2} u_{1})\ppr_{1} y_{1} \\
&&+ \omega_{1}( x_{0} \trr_{2} u_{1}, v_{1}) + \omega_{1}(u_{0}\trl_{2} x_{1}, v_{1}) + \omega_{1}(u_{0} \ast_{2} u_{1}, v_{1}),\ \\
&& (x_{0}\cdot x_{1}) \trr_{1} v_{1} + (x_{0} \ppl_{2} u_{1}) \trr_{1} v_{1} + (u_{0}\ppr_{2} x_{1}) \trr_{1} v_{1} + \omega_{2}(u_{0}, u_{1}) \trr_{1} v_{1}\\
&& + ( x_{0} \trr_{2} u_{1})\trl_{1} y_{1} + (u_{0}\trl_{2} x_{1})\trl_{1} y_{1} + (u_{0} \ast_{2} u_{1})\trl_{1} y_{1} \\
&& + ( x_{0} \trr_{2} u_{1}) \ast_{1} v_{1} + (u_{0}\trl_{2} x_{1}) \ast_{1} v_{1} + (u_{0} \ast_{2} u_{1}) \ast_{1} v_{1} \big),
\end{eqnarray*}

the right hand side is equal to
\begin{eqnarray*}
&&(x_{0},u_{0})\trr_{}((x_{1},u_{1})\circ_{1}(y_{1},v_{1}))\\
&=&(x_{0},u_{0})\trr_{}\big( x_{1}\cdot y_{1} + x_{1} \ppl_{1} v_{1} + u_{1}\ppr_{1} y_{1} + \omega_{1}(u_{1}, v_{1}),\  x_{1} \trr_{1} v_{1} + u_{1}\trl_{1} y_{1} + u_{1} \ast_{1} v_{1} \big)\\
&=&\big( x_{0}\cdot (x_{1}\cdot y_{1}) + x_{0}\cdot (x_{1} \ppl_{1} v_{1}) + x_{0}\cdot (u_{1}\ppr_{1} y_{1}) + x_{0}\cdot \omega_{1}(u_{1}, v_{1})\\
&&+ x_{0} \ppl_{2} (x_{1} \trr_{1} v_{1}) + x_{0} \ppl_{2} (u_{1}\trl_{1} y_{1}) + x_{0} \ppl_{2} (u_{1} \ast_{1} v_{1}) + u_{0}\ppr_{2} ( x_{1}\cdot y_{1})\\
&&+ u_{0}\ppr_{2} (x_{1} \ppl_{1} v_{1}) + u_{0}\ppr_{2} (u_{1}\ppr_{1} y_{1}) + u_{0}\ppr_{2} \omega_{1}(u_{1}, v_{1}) + \omega_{2}(u_{0}, x_{1} \trr_{1} v_{1}) \\
&&+ \omega_{2}(u_{0},u_{1}\trl_{1} y_{1}) + \omega_{2}(u_{0},u_{1} \ast_{1} v_{1}),\ \\
&&x_{0} \trr_{2} (x_{1} \trr_{1} v_{1}) + x_{0} \trr_{2} (u_{1}\trl_{1} y_{1}) + x_{0} \trr_{2} (u_{1} \ast_{1} v_{1}) + u_{0}\trl_{2} (x_{1}\cdot y_{1})\\
&&+ u_{0}\trl_{2} (x_{1} \ppl_{1} v_{1}) + u_{0}\trl_{2} (u_{1}\ppr_{1} y_{1}) + u_{0}\trl_{2} \omega_{1}(u_{1}, v_{1}) + u_{0} \ast_{2} (x_{1} \trr_{1} v_{1})\\
&&+  u_{0} \ast_{2} (u_{1}\trl_{1} y_{1}) +  u_{0} \ast_{2} (u_{1} \ast_{1} v_{1}) \big),
\end{eqnarray*}

\begin{eqnarray*}
&&(x_{0},u_{0})\trr_{}((y_{1},v_{1})\circ_{1}(x_{1},u_{1}) )\\
&=&(x_{0},u_{0})\trr_{}\big( y_{1}\cdot x_{1} + y_{1} \ppl_{1} u_{1} + v_{1}\ppr_{1} x_{1} + \omega_{1}(v_{1}, u_{1}),\ \\
&&  y_{1} \trr_{1} u_{1} + v_{1}\trl_{1} x_{1} + v_{1} \ast_{1} u_{1} \big)\\
&=&\big( x_{0}\cdot (y_{1}\cdot x_{1}) + x_{0}\cdot (y_{1} \ppl_{1} u_{1}) + x_{0}\cdot (v_{1}\ppr_{1} x_{1}) + x_{0}\cdot \omega_{1}(v_{1}, u_{1}))\\
&&+ x_{0} \ppl_{2} (y_{1} \trr_{1} u_{1}) + x_{0} \ppl_{2} (v_{1}\trl_{1} x_{1}) + x_{0} \ppl_{2} (v_{1} \ast_{1} u_{1}) \\
&&+ u_{0}\ppr_{2} (y_{1}\cdot x_{1}) + u_{0}\ppr_{2} (y_{1} \ppl_{1} u_{1}) + u_{0}\ppr_{2} (v_{1}\ppr_{1} x_{1}) + u_{0}\ppr_{2} \omega_{1}(v_{1}, u_{1}) \\
&&+ \omega_{2}(u_{0}, y_{1} \trr_{1} u_{1}) + \omega_{2}(u_{0}, v_{1}\trl_{1} x_{1}) + \omega_{2}(u_{0}, v_{1} \ast_{1} u_{1}),\ \\
&& x_{0} \trr_{2} (y_{1} \trr_{1} u_{1}) + x_{0} \trr_{2} (v_{1}\trl_{1} x_{1}) + x_{0} \trr_{2} (v_{1} \ast_{1} u_{1}) \\
&& + u_{0}\trl_{2} (y_{1}\cdot x_{1}) + u_{0}\trl_{2} (y_{1} \ppl_{1} u_{1}) + u_{0}\trl_{2} (v_{1}\ppr_{1} x_{1}) + u_{0}\trl_{2} \omega_{1}(v_{1}, u_{1})\\
&& + u_{0} \ast_{2} (y_{1} \trr_{1} u_{1}) + u_{0} \ast_{2} (v_{1}\trl_{1} x_{1}) + u_{0} \ast_{2} (v_{1} \ast_{1} u_{1}) \big).
\end{eqnarray*}
Equation (1) hold if and only if (Z13)--(Z26) hold.

For Equation $(2)$, the left hand side is equal to
\begin{eqnarray*}
&&((x_{0},u_{0})\circ_{0}(y_{0},v_{0}))\trr_{}(x_{1},u_{1})\\
&=&\big( x_{0}\cdot y_{0} + x_{0} \ppl_{0} v_{0} + u_{0}\ppr_{0} y_{0} + \omega_{0}(u_{0}, v_{0}),\ \\
&&  x_{0} \trr_{0} v_{0} + u_{0}\trl_{0} y_{0} + u_{0} \ast_{0} v_{0} \big)\trr_{}(x_{1},u_{1})\\
&=&\big( (x_{0}\cdot y_{0})\cdot x_{1} + (x_{0} \ppl_{0} v_{0})\cdot x_{1} + (u_{0}\ppr_{0} y_{0})\cdot x_{1} + \omega_{0}(u_{0}, v_{0})\cdot x_{1} \\
&&+ (x_{0}\cdot y_{0}) \ppl_{2} u_{1} + (x_{0} \ppl_{0} v_{0}) \ppl_{2} u_{1} + (u_{0}\ppr_{0} y_{0}) \ppl_{2} u_{1} + \omega_{0}(u_{0}, v_{0}) \ppl_{2} u_{1} \\
&&+ (x_{0} \trr_{0} v_{0})\ppr_{2} x_{1} + (u_{0}\trl_{0} y_{0})\ppr_{2} x_{1} + (u_{0} \ast_{0} v_{0})\ppr_{2} x_{1} \\
&&+ \omega_{2}(x_{0} \trr_{0} v_{0}, u_{1}) + \omega_{2}(u_{0}\trl_{0} y_{0}, u_{1}) + \omega_{2}(u_{0} \ast_{0} v_{0}, u_{1}),\ \\
&& (x_{0}\cdot y_{0}) \trr_{2} u_{1} + (x_{0} \ppl_{0} v_{0}) \trr_{2} u_{1} + (u_{0}\ppr_{0} y_{0}) \trr_{2} u_{1} + \omega_{0}(u_{0}, v_{0}) \trr_{2} u_{1} \\
&& + (x_{0} \trr_{0} v_{0})\trl_{2} x_{1} + (u_{0}\trl_{0} y_{0})\trl_{2} x_{1} + (u_{0} \ast_{0} v_{0})\trl_{2} x_{1} \\
&& + (x_{0} \trr_{0} v_{0}) \ast_{2} u_{1} + (u_{0}\trl_{0} y_{0}) \ast_{2} u_{1} + (u_{0} \ast_{0} v_{0}) \ast_{2} u_{1} \big),
\end{eqnarray*}

the right hand side is equal to
\begin{eqnarray*}
&&(x_{0},u_{0})\trr_{}((y_{0},v_{0})\trr_{}(x_{1},u_{1}))\\
&=&(x_{0},u_{0})\trr_{}\big( y_{0}\cdot x_{1} + y_{0} \ppl_{2} u_{1} + v_{0}\ppr_{2} x_{1} + \omega_{2}(v_{0}, u_{1}),\ \\
&& y_{0} \trr_{2} u_{1} + v_{0}\trl_{2} x_{1} + v_{0} \ast_{2} u_{1} \big)\\
&=&\big( x_{0}\cdot (y_{0}\cdot x_{1}) + x_{0}\cdot (y_{0} \ppl_{2} u_{1}) + x_{0}\cdot (v_{0}\ppr_{2} x_{1}) + x_{0}\cdot (\omega_{2}(v_{0}, u_{1})\\
&& + x_{0} \ppl_{2} (y_{0} \trr_{2} u_{1}) + x_{0} \ppl_{2} (v_{0}\trl_{2} x_{1}) + x_{0} \ppl_{2} (v_{0} \ast_{2} u_{1}) \\
&& + u_{0}\ppr_{2} (y_{0}\cdot x_{1}) + u_{0}\ppr_{2} (y_{0} \ppl_{2} u_{1}) + u_{0}\ppr_{2} (v_{0}\ppr_{2} x_{1}) + u_{0}\ppr_{2} \omega_{2}(v_{0}, u_{1})\\
&&+ \omega_{2}(u_{0}, y_{0} \trr_{2} u_{1}) + \omega_{2}(u_{0},v_{0}\trl_{2} x_{1}) + \omega_{2}(u_{0},v_{0} \ast_{2} u_{1}),\ \\
&& x_{0} \trr_{2} (y_{0} \trr_{2} u_{1}) + x_{0} \trr_{2} (v_{0}\trl_{2} x_{1}) + x_{0} \trr_{2} (v_{0} \ast_{2} u_{1})\\
&& + u_{0}\trl_{2} (y_{0}\cdot x_{1}) + u_{0}\trl_{2} (y_{0} \ppl_{2} u_{1}) + u_{0}\trl_{2} (v_{0}\ppr_{2} x_{1}) + u_{0}\trl_{2} \omega_{2}(v_{0}, u_{1})\\
&& + u_{0} \ast_{2} (y_{0} \trr_{2} u_{1}) + u_{0} \ast_{2} (v_{0}\trl_{2} x_{1}) + u_{0} \ast_{2} (v_{0} \ast_{2} u_{1}) \big),
\end{eqnarray*}

\begin{eqnarray*}
&&(x_{0},u_{0})\trr_{}((x_{1},u_{1})\trl_{}(y_{0},v_{0}))\\
&=&(x_{0},u_{0})\trr_{}\big( x_{1}\cdot y_{0} + x_{1} \ppl_{3} v_{0} + u_{1}\ppr_{3} y_{0} + \omega_{3}(u_{1}, v_{0}),\ \\
&& x_{1} \trr_{3} v_{0} + u_{1}\trl_{3} y_{0} + u_{1} \ast_{3} v_{0} \big)\\
&=&\big( x_{0}\cdot (x_{1}\cdot y_{0}) + x_{0}\cdot (x_{1} \ppl_{3} v_{0}) + x_{0}\cdot (u_{1}\ppr_{3} y_{0}) + x_{0}\cdot \omega_{3}(u_{1}, v_{0}) \\
&&+ x_{0} \ppl_{2} ( x_{1} \trr_{3} v_{0}) + x_{0} \ppl_{2} (u_{1}\trl_{3} y_{0}) + x_{0} \ppl_{2} (u_{1} \ast_{3} v_{0})\\
&& + u_{0}\ppr_{2} (x_{1}\cdot y_{0}) + u_{0}\ppr_{2} (x_{1} \ppl_{3} v_{0}) + u_{0}\ppr_{2} (u_{1}\ppr_{3} y_{0}) + u_{0}\ppr_{2} \omega_{3}(u_{1}, v_{0}) \\
&& + \omega_{2}(u_{0},  x_{1} \trr_{3} v_{0}) + \omega_{2}(u_{0},u_{1}\trl_{3} y_{0}) + \omega_{2}(u_{0},u_{1} \ast_{3} v_{0}),\ \\
&& x_{0} \trr_{2} ( x_{1} \trr_{3} v_{0}) + x_{0} \trr_{2} (u_{1}\trl_{3} y_{0}) + x_{0} \trr_{2} (u_{1} \ast_{3} v_{0})\\
&& + u_{0}\trl_{2} (x_{1}\cdot y_{0}) + u_{0}\trl_{2} (x_{1} \ppl_{3} v_{0}) + u_{0}\trl_{2} (u_{1}\ppr_{3} y_{0}) + u_{0}\trl_{2} \omega_{3}(u_{1}, v_{0}) \\
&& + u_{0} \ast_{2} ( x_{1} \trr_{3} v_{0}) + u_{0} \ast_{2} (u_{1}\trl_{3} y_{0}) + u_{0} \ast_{2} (u_{1} \ast_{3} v_{0}) \big).
\end{eqnarray*}
Equation (2) hold if and only if (Z27)--(Z40) hold.

For equation $(3)$, the left hand side is equal to
\begin{eqnarray*}
&&((x_{1},u_{1})\trl_{}(x_{0},u_{0}))\trl_{}(y_{0},v_{0})\\
&=&\big( x_{1}\cdot x_{0} + x_{1} \ppl_{3} u_{0} + u_{1}\ppr_{3} x_{0} + \omega_{3}(u_{1}, u_{0}),\  x_{1} \trr_{3} u_{0} + u_{1}\trl_{3} x_{0} + u_{1} \ast_{3} u_{0} \big)\trl_{}(y_{0},v_{0})\\
&=& \big( (x_{1}\cdot x_{0})\cdot y_{0} + (x_{1} \ppl_{3} u_{0})\cdot y_{0} + (u_{1}\ppr_{3} x_{0})\cdot y_{0} + \omega_{3}(u_{1}, u_{0})\cdot y_{0}\\
&&+ (x_{1}\cdot x_{0}) \ppl_{3} v_{0} + (x_{1} \ppl_{3} u_{0}) \ppl_{3} v_{0} + (u_{1}\ppr_{3} x_{0}) \ppl_{3} v_{0} + \omega_{3}(u_{1}, u_{0}) \ppl_{3} v_{0}\\
&&+ (x_{1} \trr_{3} u_{0})\ppr_{3} y_{0} + (u_{1}\trl_{3} x_{0})\ppr_{3} y_{0} + (u_{1} \ast_{3} u_{0})\ppr_{3} y_{0}\\
&&+ \omega_{3}(x_{1} \trr_{3} u_{0}, v_{0}) + \omega_{3}(u_{1}\trl_{3} x_{0}, v_{0}) + \omega_{3}(u_{1} \ast_{3} u_{0}, v_{0}),\ \\
&&(x_{1}\cdot x_{0}) \trr_{3} v_{0} + (x_{1} \ppl_{3} u_{0}) \trr_{3} v_{0} + (u_{1}\ppr_{3} x_{0}) \trr_{3} v_{0} + \omega_{3}(u_{1}, u_{0}) \trr_{3} v_{0}\\
&&+ (x_{1} \trr_{3} u_{0})\trl_{3} y_{0} + (u_{1}\trl_{3} x_{0})\trl_{3} y_{0} + (u_{1} \ast_{3} u_{0})\trl_{3} y_{0}\\
&&+ (x_{1} \trr_{3} u_{0}) \ast_{3} v_{0} + (u_{1}\trl_{3} x_{0}) \ast_{3} v_{0} + (u_{1} \ast_{3} u_{0}) \ast_{3} v_{0} \big),
\end{eqnarray*}

the right hand side is equal to
\begin{eqnarray*}
&&(x_{1},u_{1})\trl_{}((x_{0},u_{0})\circ_{0}(y_{0},v_{0}))\\
&=&(x_{1},u_{1})\trl_{}\big( x_{0}\cdot y_{0} + x_{0} \ppl_{0} v_{0} + u_{0}\ppr_{0} y_{0} + \omega_{0}(u_{0}, v_{0}),\  x_{0} \trr_{0} v_{0} + u_{0}\trl_{0} y_{0} + u_{0} \ast_{0} v_{0} \big)\\
&=&\big( x_{1}\cdot (x_{0}\cdot y_{0}) + x_{1}\cdot (x_{0} \ppl_{0} v_{0}) + x_{1}\cdot (u_{0}\ppr_{0} y_{0}) + x_{1}\cdot \omega_{0}(u_{0}, v_{0})\\
&&+ x_{1} \ppl_{3} (x_{0} \trr_{0} v_{0}) + x_{1} \ppl_{3} (u_{0}\trl_{0} y_{0}) + x_{1} \ppl_{3} (u_{0} \ast_{0} v_{0})\\
&&+ u_{1}\ppr_{3} (x_{0}\cdot y_{0}) + u_{1}\ppr_{3} (x_{0} \ppl_{0} v_{0}) + u_{1}\ppr_{3} (u_{0}\ppr_{0} y_{0}) + u_{1}\ppr_{3} \omega_{0}(u_{0}, v_{0})\\
&&+ \omega_{3}(u_{1}, x_{0} \trr_{0} v_{0}) + \omega_{3}(u_{1},u_{0}\trl_{0} y_{0}) + \omega_{3}(u_{1},u_{0} \ast_{0} v_{0}),\ \\
&&x_{1} \trr_{3} (x_{0} \trr_{0} v_{0}) + x_{1} \trr_{3} (u_{0}\trl_{0} y_{0}) + x_{1} \trr_{3} (u_{0} \ast_{0} v_{0})\\
&&+ u_{1}\trl_{3} (x_{0}\cdot y_{0}) + u_{1}\trl_{3} (x_{0} \ppl_{0} v_{0}) + u_{1}\trl_{3} (u_{0}\ppr_{0} y_{0}) + u_{1}\trl_{3} \omega_{0}(u_{0}, v_{0})\\
&&+ u_{1} \ast_{3} (x_{0} \trr_{0} v_{0}) + u_{1} \ast_{3} (u_{0}\trl_{0} y_{0}) + u_{1} \ast_{3} (u_{0} \ast_{0} v_{0}) \big),
\end{eqnarray*}

\begin{eqnarray*}
&&(x_{1},u_{1})\trl_{}((y_{0},v_{0})\circ_{0}(x_{0},u_{0}))\\
&=&(x_{1},u_{1})\trl_{}\big( y_{0}\cdot x_{0} + y_{0} \ppl_{0} u_{0} + v_{0}\ppr_{0} x_{0} + \omega_{0}(v_{0}, u_{0}),\  y_{0} \trr_{0} u_{0} + v_{0}\trl_{0} x_{0} + v_{0} \ast_{0} u_{0} \big)\\
&=&\big( x_{1}\cdot (y_{0}\cdot x_{0}) + x_{1}\cdot (y_{0} \ppl_{0} u_{0}) + x_{1}\cdot (v_{0}\ppr_{0} x_{0}) + x_{1}\cdot \omega_{0}(v_{0}, u_{0})\\
&&+ x_{1} \ppl_{3} (y_{0} \trr_{0} u_{0}) + x_{1} \ppl_{3} (v_{0}\trl_{0} x_{0}) + x_{1} \ppl_{3} (v_{0} \ast_{0} u_{0})\\
&&+ u_{1}\ppr_{3} (y_{0}\cdot x_{0}) + u_{1}\ppr_{3} (y_{0} \ppl_{0} u_{0}) + u_{1}\ppr_{3} (v_{0}\ppr_{0} x_{0}) + u_{1}\ppr_{3} \omega_{0}(v_{0}, u_{0})\\
&&+ \omega_{3}(u_{1}, y_{0} \trr_{0} u_{0}) + \omega_{3}(u_{1}, v_{0}\trl_{0} x_{0}) + \omega_{3}(u_{1},v_{0} \ast_{0} u_{0}),\\
&&x_{1} \trr_{3} (y_{0} \trr_{0} u_{0}) + x_{1} \trr_{3} (v_{0}\trl_{0} x_{0}) + x_{1} \trr_{3} (v_{0} \ast_{0} u_{0})\\
&&+ u_{1}\trl_{3} (y_{0}\cdot x_{0}) + u_{1}\trl_{3} (y_{0} \ppl_{0} u_{0}) + u_{1}\trl_{3} (v_{0}\ppr_{0} x_{0}) + u_{1}\trl_{3} \omega_{0}(v_{0}, u_{0})\\
&&+ u_{1} \ast_{3} (y_{0} \trr_{0} u_{0}) + u_{1} \ast_{3} (v_{0}\trl_{0} x_{0}) + u_{1} \ast_{3} (v_{0} \ast_{0} u_{0}) \big).
\end{eqnarray*}
Equation (3) hold if and only if (Z41)--(Z54) hold.

For equation $(4)$, the left hand side is equal to
\begin{eqnarray*}
&&((x_{0},u_{0})\trr_{}(x_{1},u_{1}))\trl_{}(y_{0},v_{0})\\
&=&\big( x_{0}\cdot x_{1} + x_{0} \ppl_{2} u_{1} + u_{0}\ppr_{2} x_{1} + \omega_{2}(u_{0}, u_{1}),\  x_{0} \trr_{2} u_{1} + u_{0}\trl_{2} x_{1} + u_{0} \ast_{2} u_{1} \big)\trl_{}(y_{0},v_{0})\\
&=&\big( (x_{0}\cdot x_{1})\cdot y_{0} + (x_{0} \ppl_{2} u_{1})\cdot y_{0} + (u_{0}\ppr_{2} x_{1})\cdot y_{0} + \omega_{2}(u_{0}, u_{1})\cdot y_{0}\\
&&+ (x_{0}\cdot x_{1}) \ppl_{3} v_{0} + (x_{0} \ppl_{2} u_{1}) \ppl_{3} v_{0} + (u_{0}\ppr_{2} x_{1}) \ppl_{3} v_{0} + \omega_{2}(u_{0}, u_{1}) \ppl_{3} v_{0}\\
&&+ (x_{0} \trr_{2} u_{1})\ppr_{3} y_{0} + (u_{0}\trl_{2} x_{1})\ppr_{3} y_{0} + (u_{0} \ast_{2} u_{1})\ppr_{3} y_{0}\\
&&+ \omega_{3}(x_{0} \trr_{2} u_{1}, v_{0}) + \omega_{3}(u_{0}\trl_{2} x_{1}, v_{0}) + \omega_{3}(u_{0} \ast_{2} u_{1}, v_{0}),\ \\
&&(x_{0}\cdot x_{1}) \trr_{3} v_{0} + (x_{0} \ppl_{2} u_{1}) \trr_{3} v_{0} + (u_{0}\ppr_{2} x_{1}) \trr_{3} v_{0} + \omega_{2}(u_{0}, u_{1}) \trr_{3} v_{0}\\
&&+ (x_{0} \trr_{2} u_{1})\trl_{3} y_{0} + (u_{0}\trl_{2} x_{1})\trl_{3} y_{0} + (u_{0} \ast_{2} u_{1})\trl_{3} y_{0}\\
&&+ (x_{0} \trr_{2} u_{1}) \ast_{3} v_{0} + (u_{0}\trl_{2} x_{1}) \ast_{3} v_{0} + (u_{0} \ast_{2} u_{1}) \ast_{3} v_{0} \big),
\end{eqnarray*}

the right hand side is equal to
\begin{eqnarray*}
&&(x_{0},u_{0})\trr_{}((x_{1},u_{1})\trl_{}(y_{0},v_{0}))\\
&=&(x_{0},u_{0})\trr_{}\big( x_{1}\cdot y_{0} + x_{1} \ppl_{3} v_{0} + u_{1}\ppr_{3} y_{0} + \omega_{3}(u_{1}, v_{0}),\  x_{1} \trr_{3} v_{0} + u_{1}\trl_{3} y_{0} + u_{1} \ast_{3} v_{0} \big)\\
&=&\big( x_{0}\cdot (x_{1}\cdot y_{0}) + x_{0}\cdot (x_{1} \ppl_{3} v_{0}) + x_{0}\cdot (u_{1}\ppr_{3} y_{0}) + x_{0}\cdot \omega_{3}(u_{1}, v_{0})\\
&&+ x_{0} \ppl_{2} (x_{1} \trr_{3} v_{0}) + x_{0} \ppl_{2} (u_{1}\trl_{3} y_{0}) + x_{0} \ppl_{2} (u_{1} \ast_{3} v_{0})\\
&&+ u_{0}\ppr_{2} (x_{1}\cdot y_{0}) + u_{0}\ppr_{2} (x_{1} \ppl_{3} v_{0}) + u_{0}\ppr_{2} (u_{1}\ppr_{3} y_{0}) + u_{0}\ppr_{2} \omega_{3}(u_{1}, v_{0})\\
&&+ \omega_{2}(u_{0}, x_{1} \trr_{3} v_{0}) + \omega_{2}(u_{0}, u_{1}\trl_{3} y_{0}) + \omega_{2}(u_{0}, u_{1} \ast_{3} v_{0}),\ \\
&&x_{0} \trr_{2} (x_{1} \trr_{3} v_{0}) + x_{0} \trr_{2} (u_{1}\trl_{3} y_{0}) + x_{0} \trr_{2} (u_{1} \ast_{3} v_{0})\\
&&+ u_{0}\trl_{2} (x_{1}\cdot y_{0}) + u_{0}\trl_{2} (x_{1} \ppl_{3} v_{0}) + u_{0}\trl_{2} (u_{1}\ppr_{3} y_{0}) + u_{0}\trl_{2} \omega_{3}(u_{1}, v_{0})\\
&&+ u_{0} \ast_{2} (x_{1} \trr_{3} v_{0}) + u_{0} \ast_{2} (u_{1}\trl_{3} y_{0}) + u_{0} \ast_{2} (u_{1} \ast_{3} v_{0}) \big),
\end{eqnarray*}

\begin{eqnarray*}
&&(x_{0},u_{0})\trr_{}((y_{0},v_{0})\trr_{}(x_{1},u_{1}))\\
&=&(x_{0},u_{0})\trr_{}\big( y_{0}\cdot x_{1} + y_{0} \ppl_{2} u_{1} + v_{0}\ppr_{2} x_{1} + \omega_{2}(v_{0}, u_{1}),\  y_{0} \trr_{2} u_{1} + v_{0}\trl_{2} x_{1} + v_{0} \ast_{2} u_{1} \big)\\
&=&\big( x_{0}\cdot (y_{0}\cdot x_{1}) + x_{0}\cdot (y_{0} \ppl_{2} u_{1}) + x_{0}\cdot (v_{0}\ppr_{2} x_{1}) + x_{0}\cdot \omega_{2}(v_{0}, u_{1})\\
&&+ x_{0} \ppl_{2} (y_{0} \trr_{2} u_{1}) + x_{0} \ppl_{2} (v_{0}\trl_{2} x_{1}) + x_{0} \ppl_{2} (v_{0} \ast_{2} u_{1})\\
&&+ u_{0}\ppr_{2} (y_{0}\cdot x_{1}) + u_{0}\ppr_{2} (y_{0} \ppl_{2} u_{1}) + u_{0}\ppr_{2} (v_{0}\ppr_{2} x_{1}) + u_{0}\ppr_{2} \omega_{2}(v_{0}, u_{1})\\
&&+ \omega_{2}(u_{0}, y_{0} \trr_{2} u_{1}) + \omega_{2}(u_{0}, v_{0}\trl_{2} x_{1}) + \omega_{2}(u_{0}, v_{0} \ast_{2} u_{1}),\ \\
&&x_{0} \trr_{2} (y_{0} \trr_{2} u_{1}) + x_{0} \trr_{2} (v_{0}\trl_{2} x_{1}) + x_{0} \trr_{2} (v_{0} \ast_{2} u_{1})\\
&&+ u_{0}\trl_{2} (y_{0}\cdot x_{1}) + u_{0}\trl_{2} (y_{0} \ppl_{2} u_{1}) + u_{0}\trl_{2} (v_{0}\ppr_{2} x_{1}) + u_{0}\trl_{2} \omega_{2}(v_{0}, u_{1})\\
&&+ u_{0} \ast_{2} (y_{0} \trr_{2} u_{1}) + u_{0} \ast_{2} (v_{0}\trl_{2} x_{1}) + u_{0} \ast_{2} (v_{0} \ast_{2} u_{1}) \big).
\end{eqnarray*}
Equation (4) hold if and only if (Z55)--(Z68) hold.

For equation $(5)$, the left hand side is equal to
\begin{eqnarray*}
&&((x_{1},u_{1})\trl_{}(x_{0},u_{0}))\circ_{1}(y_{1},v_{1})\\
&=&\big( x_{1}\cdot x_{0} + x_{1} \ppl_{3} u_{0} + u_{1}\ppr_{3} x_{0} + \omega_{3}(u_{1}, u_{0}),\, \, x_{1} \trr_{3} u_{0} + u_{1}\trl_{3} x_{0} + u_{1} \ast_{3} u_{0} \big)\circ_{1}(y_{1},v_{1})\\
&=&\big( (x_{1}\cdot x_{0})\cdot y_{1} + (x_{1} \ppl_{3} u_{0})\cdot y_{1} + (u_{1}\ppr_{3} x_{0})\cdot y_{1} + \omega_{3}(u_{1}, u_{0})\cdot y_{1}\\
&&+ (x_{1}\cdot x_{0}) \ppl_{1} v_{1} + (x_{1} \ppl_{3} u_{0}) \ppl_{1} v_{1} + (u_{1}\ppr_{3} x_{0}) \ppl_{1} v_{1} + \omega_{3}(u_{1}, u_{0}) \ppl_{1} v_{1}\\
&&+ (x_{1} \trr_{3} u_{0})\ppr_{1} y_{1} + (u_{1}\trl_{3} x_{0})\ppr_{1} y_{1} + (u_{1} \ast_{3} u_{0})\ppr_{1} y_{1} \\
&&+ \omega_{1}(x_{1} \trr_{3} u_{0}, v_{1}) + \omega_{1}(u_{1}\trl_{3} x_{0}, v_{1}) + \omega_{1}(u_{1} \ast_{3} u_{0}, v_{1}),\, \, \\
&&(x_{1}\cdot x_{0}) \trr_{1} v_{1} + (x_{1} \ppl_{3} u_{0}) \trr_{1} v_{1} + (u_{1}\ppr_{3} x_{0}) \trr_{1} v_{1} + \omega_{3}(u_{1}, u_{0}) \trr_{1} v_{1} \\
&&+ (x_{1} \trr_{3} u_{0})\trl_{1} y_{1} + (u_{1}\trl_{3} x_{0})\trl_{1} y_{1} + (u_{1} \ast_{3} u_{0})\trl_{1} y_{1}\\
&&+ (x_{1} \trr_{3} u_{0}) \ast_{1} v_{1} + (u_{1}\trl_{3} x_{0}) \ast_{1} v_{1} + (u_{1} \ast_{3} u_{0}) \ast_{1} v_{1} \big),
\end{eqnarray*}

the right hand side is equal to
\begin{eqnarray*}
&&(x_{1},u_{1})\circ_{1}((x_{0},u_{0})\trr_{}(y_{1},v_{1}))\\
&=&(x_{1},u_{1})\circ_{1}\big( x_{0}\cdot y_{1} + x_{0} \ppl_{2} v_{1} + u_{0}\ppr_{2} y_{1} + \omega_{2}(u_{0}, v_{1}),\, \, x_{0} \trr_{2} v_{1} + u_{0}\trl_{2} y_{1} + u_{0} \ast_{2} v_{1} \big)\\
&=&\big( x_{1}\cdot (x_{0}\cdot y_{1}) + x_{1}\cdot (x_{0} \ppl_{2} v_{1}) + x_{1}\cdot (u_{0}\ppr_{2} y_{1}) + x_{1}\cdot \omega_{2}(u_{0}, v_{1})\\
&&+ x_{1} \ppl_{1} (x_{0} \trr_{2} v_{1}) + x_{1} \ppl_{1} (u_{0}\trl_{2} y_{1}) + x_{1} \ppl_{1} (u_{0} \ast_{2} v_{1})\\
&&+ u_{1}\ppr_{1} (x_{0}\cdot y_{1}) + u_{1}\ppr_{1} (x_{0} \ppl_{2} v_{1}) + u_{1}\ppr_{1} (u_{0}\ppr_{2} y_{1}) + u_{1}\ppr_{1} \omega_{2}(u_{0}, v_{1})\\
&&+ \omega_{1}(u_{1}, x_{0} \trr_{2} v_{1}) + \omega_{1}(u_{1}, u_{0}\trl_{2} y_{1}) + \omega_{1}(u_{1}, u_{0} \ast_{2} v_{1}),\, \,\\
&&x_{1} \trr_{1} (x_{0} \trr_{2} v_{1}) + x_{1} \trr_{1} (u_{0}\trl_{2} y_{1}) + x_{1} \trr_{1} (u_{0} \ast_{2} v_{1})\\
&&+ u_{1}\trl_{1} (x_{0}\cdot y_{1}) + u_{1}\trl_{1} (x_{0} \ppl_{2} v_{1}) + u_{1}\trl_{1} (u_{0}\ppr_{2} y_{1}) + u_{1}\trl_{1} \omega_{2}(u_{0}, v_{1})\\
&&+ u_{1} \ast_{1} (x_{0} \trr_{2} v_{1}) + u_{1} \ast_{1} (u_{0}\trl_{2} y_{1}) + u_{1} \ast_{1} (u_{0} \ast_{2} v_{1}) \big),
\end{eqnarray*}
\begin{eqnarray*}
&&(x_{1},u_{1})\circ_{1}((y_{1},v_{1})\trl_{}(x_{0},u_{0}))\\
&=&(x_{1},u_{1})\circ_{1}\big( y_{1}\cdot x_{0} + y_{1} \ppl_{3} u_{0} + v_{1}\ppr_{3} x_{0} + \omega_{3}(v_{1}, u_{0}),\  y_{1} \trr_{3} u_{0} + v_{1}\trl_{3} x_{0} + v_{1} \ast_{3} u_{0} \big)\\
&=&\big( x_{1}\cdot (y_{1}\cdot x_{0}) + x_{1}\cdot (y_{1} \ppl_{3} u_{0}) + x_{1}\cdot (v_{1}\ppr_{3} x_{0}) + x_{1}\cdot \omega_{3}(v_{1}, u_{0}) \\
&&+ x_{1} \ppl_{1} (y_{1} \trr_{3} u_{0}) + x_{1} \ppl_{1} (v_{1}\trl_{3} x_{0}) + x_{1} \ppl_{1} (v_{1} \ast_{3} u_{0}) \\
&&+ u_{1}\ppr_{1} (y_{1}\cdot x_{0}) + u_{1}\ppr_{1} (y_{1} \ppl_{3} u_{0}) + u_{1}\ppr_{1} (v_{1}\ppr_{3} x_{0}) + u_{1}\ppr_{1} \omega_{3}(v_{1}, u_{0}) \\
&&+ \omega_{1}(u_{1}, y_{1} \trr_{3} u_{0}) + \omega_{1}(u_{1}, v_{1}\trl_{3} x_{0}) + \omega_{1}(u_{1}, v_{1} \ast_{3} u_{0}),\, \, \\
&&x_{1} \trr_{1} (y_{1} \trr_{3} u_{0}) + x_{1} \trr_{1} (v_{1}\trl_{3} x_{0}) + x_{1} \trr_{1} (v_{1} \ast_{3} u_{0}) \\
&&+ u_{1}\trl_{1} (y_{1}\cdot x_{0}) + u_{1}\trl_{1} (y_{1} \ppl_{3} u_{0}) + u_{1}\trl_{1} (v_{1}\ppr_{3} x_{0}) + u_{1}\trl_{1} \omega_{3}(v_{1}, u_{0})\\
&&+ u_{1} \ast_{1} (y_{1} \trr_{3} u_{0}) + u_{1} \ast_{1} (v_{1}\trl_{3} x_{0}) + u_{1} \ast_{1} (v_{1} \ast_{3} u_{0})\big).
\end{eqnarray*}
Equation (5) hold if and only if (Z69)--(Z82) hold.

For Equation $(6)$, the left hand side is equal to
\begin{eqnarray*}
&&((x_{1},u_{1})\circ_{1}(y_{1},v_{1}))\trl_{}(x_{0},u_{0})\\
&=&\big( x_{1}\cdot y_{1} + x_{1} \ppl_{1} v_{1} + u_{1}\ppr_{1} y_{1} + \omega_{1}(u_{1}, v_{1}),\, \, x_{1} \trr_{1} v_{1} + u_{1}\trl_{1} y_{1} + u_{1} \ast_{1} v_{1}\big)\trl_{}(x_{0},u_{0})\\
&=&\big( (x_{1}\cdot y_{1})\cdot x_{0} + (x_{1} \ppl_{1} v_{1})\cdot x_{0} + (u_{1}\ppr_{1} y_{1})\cdot x_{0} + \omega_{1}(u_{1}, v_{1})\cdot x_{0}\\
&&+ (x_{1}\cdot y_{1}) \ppl_{3} u_{0} + (x_{1} \ppl_{1} v_{1}) \ppl_{3} u_{0} + (u_{1}\ppr_{1} y_{1}) \ppl_{3} u_{0} + \omega_{1}(u_{1}, v_{1}) \ppl_{3} u_{0}\\
&&+ (x_{1} \trr_{1} v_{1})\ppr_{3} x_{0} + (u_{1}\trl_{1} y_{1})\ppr_{3} x_{0} + (u_{1} \ast_{1} v_{1})\ppr_{3} x_{0}\\
&&+ \omega_{3}(x_{1} \trr_{1} v_{1}, u_{0}) + \omega_{3}(u_{1}\trl_{1} y_{1}, u_{0}) + \omega_{3}(u_{1} \ast_{1} v_{1}, u_{0}),\, \,\\
&&(x_{1}\cdot y_{1}) \trr_{3} u_{0} + (x_{1} \ppl_{1} v_{1}) \trr_{3} u_{0} + (u_{1}\ppr_{1} y_{1}) \trr_{3} u_{0} + \omega_{1}(u_{1}, v_{1}) \trr_{3} u_{0}\\
&&+ (x_{1} \trr_{1} v_{1})\trl_{3} x_{0} + (u_{1}\trl_{1} y_{1})\trl_{3} x_{0} + (u_{1} \ast_{1} v_{1})\trl_{3} x_{0}\\
&&+ (x_{1} \trr_{1} v_{1}) \ast_{3} u_{0} + (u_{1}\trl_{1} y_{1}) \ast_{3} u_{0} + (u_{1} \ast_{1} v_{1}) \ast_{3} u_{0} \big),
\end{eqnarray*}

the right hand side is equal to
\begin{eqnarray*}
&&(x_{1},u_{1})\circ_{1}((y_{1},v_{1})\trl_{}(x_{0},u_{0}))\\
&=&(x_{1},u_{1})\circ_{1}\big( y_{1}\cdot x_{0} + y_{1} \ppl_{3} u_{0} + v_{1}\ppr_{3} x_{0} + \omega_{3}(v_{1}, u_{0}),\, \, y_{1} \trr_{3} u_{0} + v_{1}\trl_{3} x_{0} + v_{1} \ast_{3} u_{0} \big)\\
&=&\big( x_{1}\cdot (y_{1}\cdot x_{0}) + x_{1}\cdot (y_{1} \ppl_{3} u_{0}) + x_{1}\cdot (v_{1}\ppr_{3} x_{0}) + x_{1}\cdot \omega_{3}(v_{1}, u_{0})\\
&&+ x_{1} \ppl_{1} (y_{1} \trr_{3} u_{0}) + x_{1} \ppl_{1} (v_{1}\trl_{3} x_{0}) + x_{1} \ppl_{1} (v_{1} \ast_{3} u_{0})\\
&&+ u_{1}\ppr_{1} (y_{1}\cdot x_{0}) + u_{1}\ppr_{1} (y_{1} \ppl_{3} u_{0}) + u_{1}\ppr_{1} (v_{1}\ppr_{3} x_{0}) + u_{1}\ppr_{1} \omega_{3}(v_{1}, u_{0})\\
&&+ \omega_{1}(u_{1}, y_{1} \trr_{3} u_{0}) + \omega_{1}(u_{1},v_{1}\trl_{3} x_{0}) + \omega_{1}(u_{1},v_{1} \ast_{3} u_{0}),\, \,\\
&&x_{1} \trr_{1} (y_{1} \trr_{3} u_{0}) + x_{1} \trr_{1} (v_{1}\trl_{3} x_{0}) + x_{1} \trr_{1} (v_{1} \ast_{3} u_{0})\\
&&+ u_{1}\trl_{1} (y_{1}\cdot x_{0}) + u_{1}\trl_{1} (y_{1} \ppl_{3} u_{0}) + u_{1}\trl_{1} (v_{1}\ppr_{3} x_{0}) + u_{1}\trl_{1} \omega_{3}(v_{1}, u_{0})\\
&&+ u_{1} \ast_{1} (y_{1} \trr_{3} u_{0}) +  u_{1} \ast_{1} (v_{1}\trl_{3} x_{0}) +  u_{1} \ast_{1} (v_{1} \ast_{3} u_{0})\big),
\end{eqnarray*}

\begin{eqnarray*}
&&(x_{1},u_{1})\circ_{1}((x_{0},u_{0})\trr_{}(y_{1},v_{1}))\\
&=&(x_{1},u_{1})\circ_{1}\big( x_{0}\cdot y_{1} + x_{0} \ppl_{2} v_{1} + u_{0}\ppr_{2} y_{1} + \omega_{2}(u_{0}, v_{1}),\  x_{0} \trr_{2} v_{1} + u_{0}\trl_{2} y_{1} + u_{0} \ast_{2} v_{1} \big)\\
&=&\big( x_{1}\cdot (x_{0}\cdot y_{1}) + x_{1}\cdot (x_{0} \ppl_{2} v_{1}) + x_{1}\cdot (u_{0}\ppr_{2} y_{1}) + x_{1}\cdot \omega_{2}(u_{0}, v_{1}) \\
&&+ x_{1} \ppl_{1} (x_{0} \trr_{2} v_{1}) + x_{1} \ppl_{1} (u_{0}\trl_{2} y_{1}) + x_{1} \ppl_{1} (u_{0} \ast_{2} v_{1}) \\
&&+ u_{1}\ppr_{1} (x_{0}\cdot y_{1}) + u_{1}\ppr_{1} (x_{0} \ppl_{2} v_{1}) + u_{1}\ppr_{1} (u_{0}\ppr_{2} y_{1}) + u_{1}\ppr_{1} \omega_{2}(u_{0}, v_{1}) \\
&&+ \omega_{1}(u_{1}, x_{0} \trr_{2} v_{1}) + \omega_{1}(u_{1}, u_{0}\trl_{2} y_{1}) + \omega_{1}(u_{1}, u_{0} \ast_{2} v_{1}),\, \, \\
&&x_{1} \trr_{1} (x_{0} \trr_{2} v_{1}) + x_{1} \trr_{1} (u_{0}\trl_{2} y_{1}) + x_{1} \trr_{1} (u_{0} \ast_{2} v_{1}) \\
&&+ u_{1}\trl_{1} (x_{0}\cdot y_{1}) + u_{1}\trl_{1} (x_{0} \ppl_{2} v_{1}) + u_{1}\trl_{1} (u_{0}\ppr_{2} y_{1}) + u_{1}\trl_{1} \omega_{2}(u_{0}, v_{1}) \\
&&+ u_{1} \ast_{1} (x_{0} \trr_{2} v_{1}) + u_{1} \ast_{1} (u_{0}\trl_{2} y_{1}) + u_{1} \ast_{1} (u_{0} \ast_{2} v_{1})\big).
\end{eqnarray*}
Equation (6) hold if and only if (Z83)--(Z96) hold.

For equation $(7)$, the left hand side is equal to
\begin{eqnarray*}
&&\varphi_E((x_{0},u_{0})\trr_{}(x_{1},u_{1}))\\
&=&\varphi_E\big( x_{0}\cdot x_{1} + x_{0} \ppl_{2} u_{1} + u_{0}\ppr_{2} x_{1} + \omega_{2}(u_{0}, u_{1}),\
 x_{0} \trr_{2} u_{1} + u_{0}\trl_{2} x_{1} + u_{0} \ast_{2} u_{1} \big)\\
&=&\big(\varphi(x_{0}\cdot x_{1}) + \varphi(x_{0} \ppl_{2} u_{1}) + \varphi(u_{0}\ppr_{2} x_{1}) + \varphi\omega_{2}(u_{0}, u_{1})+\sigma(x_{0} \trr_{2} u_{1})\\
&&+ \sigma(u_{0}\trl_{2} x_{1}) + \sigma(u_{0} \ast_{2} u_{1}), d(x_{0} \trr_{2} u_{1}) + d(u_{0}\trl_{2} x_{1}) + d(u_{0} \ast_{2} u_{1})\big),
\end{eqnarray*}

the right hand side is equal to
\begin{eqnarray*}
&&(x_{0},u_{0})\circ_{0}\varphi_E(x_{1},u_{1})\\
&=&(x_{0},u_{0})\circ_{0}(\varphi(x_{1})+\sigma(u_{1}), d(u_{1}))\\
&=&\big( x_{0}\cdot \varphi(x_{1})+x_{0}\cdot \sigma(u_{1}) + x_{0} \ppl_{0} d(u_{1})
 + u_{0}\ppr_{0} \varphi(x_{1})+u_{0}\ppr_{0} \sigma(u_{1}) \\
&& + \omega_{0}(u_{0}, d(u_{1})),\  x_{0} \trr_{0} d(u_{1}) + u_{0}\trl_{0} \varphi(x_{1})+u_{0}\trl_{0}\sigma(u_{1}) + u_{0} \ast_{0} d(u_{1}) \big).
\end{eqnarray*}
Equation (7) hold if and only if (Z97)--(Z102) hold.

For equation $(8)$, the left hand side is equal to
\begin{eqnarray*}
&&\varphi_E((x_{1},u_{1})\trl_{}(x_{0},u_{0}))\\
&=&\varphi_E\big( x_{1}\cdot x_{0} + x_{1} \ppl_{3} u_{0} + u_{1}\ppr_{3} x_{0} + \omega_{3}(u_{1}, u_{0}),\  x_{1} \trr_{3} u_{0} + u_{1}\trl_{3} x_{0} + u_{1} \ast_{3} u_{0} \big)\\
&=&(\varphi(x_{1}\cdot x_{0}) + \varphi(x_{1} \ppl_{3} u_{0}) + \varphi(u_{1}\ppr_{3} x_{0}) + \varphi\omega_{3}(u_{1}, u_{0})+\sigma(x_{1} \trr_{3} u_{0})\\
&& + \sigma(u_{1}\trl_{3} x_{0}) + \sigma(u_{1} \ast_{3} u_{0}), d(x_{1} \trr_{3} u_{0}) + d(u_{1}\trl_{3} x_{0}) + d(u_{1} \ast_{3} u_{0})),
\end{eqnarray*}

the right hand side is equal to
\begin{eqnarray*}
&&\varphi_E(x_{1},u_{1})\circ_{0}(x_{0},u_{0})\\
&=&(\varphi(x_{1})+\sigma(u_{1}), d(u_{1}))\circ_{0}(x_{0},u_{0})\\
&=&\big( \varphi(x_{1})\cdot x_{0}+\sigma(u_{1})\cdot x_{0}+ \varphi(x_{1})\ppl_{0} u_{0}+\sigma(u_{1}) \ppl_{0} u_{0}+ d(u_{1})\ppr_{0} x_{0}\\
&&+ \omega_{0}(d(u_{1}), u_{0}),\ \varphi(x_{1})\trr_{0} u_{0}+\sigma(u_{1}) \trr_{0} u_{0}+ d(u_{1})\trl_{0} x_{0}+ d(u_{1}) \ast_{0} u_{0} \big).
\end{eqnarray*}
Equation (8) hold if and only if (Z103)--(Z108) hold.

For equation $(9)$, the left hand side is equal to
$$
\begin{aligned}
&\varphi_E(x_{1},u_{1})\trr_{}(y_{1},v_{1})\\
&=(\varphi(x_{1})+\sigma(u_{1}), d(u_{1}))\trr_{}(y_{1},v_{1})\\
&=\big( \varphi(x_{1})\cdot y_{1}+\sigma(u_{1})\cdot y_{1} + \varphi(x_{1})\ppl_{2} v_{1}+\sigma(u_{1}) \ppl_{2} v_{1}+ d(u_{1})\ppr_{2} y_{1} \\
&+ \omega_{2}(d(u_{1}), v_{1}),\  \varphi(x_{1})\trr_{2} v_{1}+\sigma(u_{1}) \trr_{2} v_{1}+ d(u_{1})\trl_{2} y_{1} + d(u_{1}) \ast_{2} v_{1} \big),
\end{aligned}
$$
the right hand side is equal to
$$
\begin{aligned}
&(x_{1},u_{1})\circ_{1}(y_{1},v_{1})\\
&=\big( x_{1}\cdot y_{1} + x_{1} \ppl_{1} v_{1} + u_{1}\ppr_{1} y_{1} + \omega_{1}(u_{1}, v_{1}),\
x_{1} \trr_{1} v_{1} + u_{1}\trl_{1} y_{1} + u_{1} \ast_{1} v_{1} \big),\\
&(x_{1},u_{1})\trl_{}\varphi_E(y_{1},v_{1})\\
&=(x_{1},u_{1})\trl_{}(\varphi(y_{1})+\sigma(v_{1}), d(v_{1}))\\
&=\big( x_{1}\cdot \varphi(y_{1})+x_{1}\cdot \sigma(v_{1}) + x_{1} \ppl_{3} d(v_{1})  + u_{1}\ppr_{3} \varphi(y_{1}) + u_{1}\ppr_{3}\sigma(v_{1})\\
& + \omega_{3}(u_{1}, d(v_{1})),\ x_{1} \trr_{3} d(v_{1})+ u_{1}\trl_{3} \varphi(y_{1})+u_{1}\trl_{3}\sigma(v_{1}) + u_{1} \ast_{3} d(v_{1}) \big).
\end{aligned}
$$
Equation (9) hold if and only if (Z109)--(Z120) hold.
Now the proof is completed.
\end{proof}

\begin{examples}
The conditions in the above Theorem \ref{thm:unifyprod} seems a little complicated. In fact, for a Zinbiel algebra, we can assume $(0, Z, 0)$ to be a trivial example of Zinbiel 2-algebra as in example \ref{exam01},  we also let $(V_{1}, V_{0}, d)$ be a 2-vector space.
The extending datum of $(0, Z_0,0)$ by $(V_{1}, V_{0}, d)$ is a system consist of the following maps:
\begin{eqnarray*}
\rightharpoonup_{0}: V_{0} \times Z_{0} \rightarrow Z_{0}, \quad\leftharpoonup_{0}: Z_{0} \times V_{0} \rightarrow Z_{0}, \quad \triangleright_{0}: Z_{0} \times V_{0} \rightarrow V_{0},
\quad \triangleleft_{0}: V_{0} \times Z_{0} \rightarrow V_{0},\\
\sigma: V_{1} \rightarrow Z_{0}, \quad
 \triangleright_{2}: Z_{0} \times V_{1} \rightarrow V_{1},
\quad \triangleleft_{3}: V_{1} \times Z_{0} \rightarrow V_{1},\quad \omega_{0}: V_{0} \times V_{0} \rightarrow Z_{0},\\
\quad *_{0}: V_{0} \times V_{0} \rightarrow V_{0},
\quad *_{1}: V_{1} \times V_{1} \rightarrow V_{1}\quad *_{2}: V_{0} \times V_{1} \rightarrow V_{1},
\quad *_{3}: V_{1} \times V_{0} \rightarrow V_{1}.
\end{eqnarray*}

As a Zinbiel 2-algebra, $(E_1,E_0,\varphi_E)$ where $E_1=0\oplus V_{1} $, $E_0=Z_{0}\oplus V_{0}$ and $\varphi_E:
0\oplus V_{1} \rightarrow Z_{0}\oplus V_{0}$ is defined as follows:
 \begin{eqnarray}
 \varphi_E(0, u_1)=(\sigma(u_1), d(u_1)),
 \end{eqnarray}
for all $u \in V_{1}$. The multiplication maps $\circ_{i}:\left(Z_{i} \oplus V_{i}\right) \times\left(Z_{i} \oplus V_{i}\right) \rightarrow Z_{i} \oplus V_{i}$ and the action maps $\trr: (Z_{0}\oplus V_{0}) \times (0\oplus V_{1})\to 0 \oplus V_{1},$ $\trl: (0 \oplus V_{1}) \times (Z_{0}\oplus V_{0})\to 0 \oplus V_{1} $ are given by
$$
\left(x_{0}, u_{0}\right) \circ_{0} \left(y_{0}, v_{0}\right)
 =\big( x_{0}\cdot y_{0} + x_{0} \ppl_{0} v_{0} + u_{0}\ppr_{0} y_{0} + \omega_{0}(u_{0}, v_{0}),\  x_{0} \trr_{0} v_{0} + u_{0}\trl_{0} y_{0} + u_{0} \ast_{0} v_{0} \big),
$$
$$
\left(0, u_{1}\right) \circ_{1} \left(0, v_{1}\right)
 =\big(0,\  u_{1} \ast_{1} v_{1} \big),
$$
and
$$
\left(x_{0}, u_{0}\right) \trr \left(0, u_{1}\right)
 = \big(0,\  x_{0} \trr_{2} u_{1} + u_{0} \ast_{2} u_{1} \big),
$$
$$
\left(0, u_{1}\right) \trl \left( x_{0}, u_{0}\right)
 = \big(0,\   u_{1}\trl_{3} x_{0} + u_{1} \ast_{3} u_{0} \big).
$$
Then one obtain a nontrivial  example of Zinbiel 2-algebra by choosing $\sigma$ to satisfying the following conditions:
\begin{enumerate}
\item[(ZZ1)] $( x_{0}\cdot y_{0} )\trr_{0} w_{0}=x_{0}\trr_{0}( y_{0}\trr_{0} w_{0}+w_{0}\trl_{0} y_{0}),$\\
           $(x_{0}\trr_{0} v_{0})\trl_{0} z_{0}=x_{0}\trr_{0}(v_{0}\trl_{0} z_{0}+ z_{0}\trr_{0} v_{0}),$\\
           $(u_{0}\trl_{0} y_{0})\trl_{0} z_{0}=u_{0}\trl_{0} (y_{0}\cdot z_{0}+ z_{0}\cdot y_{0}),$
\item[(ZZ2)]
$(x_{0}\ppl_{0} v_{0})\cdot y_{0}+(x_{0} \trr_{0} v_{0})\ppr_{0} y_{0}
=x_{0}\cdot (v_{0} \ppr_{0} y_{0}+y_{0} \ppl_{0} v_{0})+x_{0}\ppl_{0}( v_{0}\trl_{0} y_{0} + y_{0} \trr_{0} v_{0}),$
\item[(ZZ3)] $(u_{0} \ppr_{0} x_{0})\cdot y_{0} +( u_{0}\trl_{0} x_{0} )\ppr_{0} y_{0} = u_{0}\ppr_{0}( x_{0} \cdot  y_{0} + y_{0}\cdot  x_{0} ),$
\item[(ZZ4)]
$\omega_{0}(u_{0}, v_{0})\cdot x_{0}+ (u_{0} \ast_{0} v_{0} )\ppr_{0} x_{0}
= u_{0}\ppr_{0}( v_{0}\ppr_{0} x_{0} + x_{0} \ppl_{0} v_{0})+ \omega_{0}(u_{0},v_{0}\trl_{0} x_{0}+ x_{0} \trr_{0} v_{0}),$
\item[(ZZ5)] $(u_{0} \ast_{0} v_{0})\trl_{0} x_{0} = u_{0}\trl_{0}( v_{0}\ppr_{0} x_{0}+x_{0}\ppl_{0} v_{0})+u_{0} \ast_{0}( v_{0}\trl_{0} x_{0} + x_{0} \trr_{0} v_{0} ),$
\item[(ZZ6)]$( x_{0}\cdot y_{0} )\ppl_{0} w_{0}  = x_{0}\cdot(y_{0} \ppl_{0} w_{0}+w_{0}\ppr_{0} y_{0})+x_{0}\ppl_{0}(y_{0} \trr_{0} w_{0}+w_{0}\trl_{0} y_{0}),$
\item[(ZZ7)]
$( x_{0} \ppl_{0} v_{0})\ppl_{0} w_{0}+\omega_{0}(x_{0} \trr_{0} v_{0},w_{0})
=x_{0}\cdot\big(\omega_{0}(v_{0}, w_{0})+\omega_{0}(w_{0},v_{0})\big)+x_{0}\ppl_{0}(v_{0} \ast_{0} w_{0} +w_{0} \ast_{0} v_{0} ),$
\item[(ZZ8)] $(x_{0} \ppl_{0} v_{0}) \trr_{0} w_{0}+(x_{0}\trr_{0} v_{0})\ast_{0} w_{0} = x_{0}\trr_{0}( v_{0} \ast_{0} w_{0}+w_{0} \ast_{0} v_{0} ),$
\item[(ZZ9)]
$( u_{0}\ppr_{0} x_{0})\ppl_{0} w_{0}+\omega_{0}( u_{0}\trl_{0} x_{0},w_{0})
= u_{0}\ppr_{0}( x_{0} \ppl_{0} w_{0}+w_{0}\ppr_{0} x_{0} )+\omega_{0}(u_{0}, w_{0}\trl_{0} x_{0} + x_{0}\trr_{0} w_{0}),$
\item[(ZZ10)]
$(u_{0}\ppr_{0} x_{0}) \trr_{0} w_{0}+ (u_{0}\trl_{0} x_{0})\ast_{0} w_{0}
= u_{0}\trl_{0}(x_{0} \ppl_{0} w_{0} + w_{0}\ppr_{0} x_{0} )+u_{0} \ast_{0}(x_{0} \trr_{0} w_{0} + w_{0}\trl_{0} x_{0}),$
\item[(ZZ11)]
$\omega_{0}(u_{0}, v_{0})\ppl_{0} w_{0}+\omega_{0}( u_{0} \ast_{0} v_{0} ,w_{0} )
=u_{0}\ppr_{0}\big(\omega_{0}(v_{0}, w_{0})+\omega_{0}(w_{0},v_{0})\big)+ \omega_{0}(u_{0},v_{0} \ast_{0} w_{0}+ w_{0} \ast_{0} v_{0} ),$
\item[(ZZ12)]
$\omega_{0}(u_{0}, v_{0}) \trr_{0} w_{0}+(u_{0} \ast_{0} v_{0})\ast_{0} w_{0}
=u_{0}\trl_{0} \b0g(\omega_{0}(v_{0}, w_{0})+\omega_{0}(w_{0},v_{0})\b0g)+u_{0} \ast_{0}(v_{0} \ast_{0} w_{0}+w_{0} \ast_{0} v_{0}),$
\item[(ZZ13)] $( x_{0} \trr_{2} u_{1}) \ast_{1} v_{1}=x_{0} \trr_{2} (u_{1} \ast_{1} v_{1}+v_{1} \ast_{1} u_{1}),$
\item[(ZZ14)]
$(u_{0} \ast_{2} u_{1}) \ast_{1} v_{1}=u_{0} \ast_{2} (u_{1} \ast_{1} v_{1}+v_{1} \ast_{1} u_{1}) \big),$
\item[(ZZ15)]
$(x_{0}\cdot y_{0}) \trr_{2} u_{1}=x_{0} \trr_{2} (y_{0} \trr_{2} u_{1}+u_{1}\trl_{3} y_{0}),$
\item[(ZZ16)]
$(x_{0} \ppl_{0} u_{0}) \trr_{2} u_{1} + (x_{0} \trr_{0} u_{0}) \ast_{2} u_{1}=x_{0} \trr_{2} (u_{0} \ast_{2} u_{1}+u_{1} \ast_{3} u_{0}),$
\item[(ZZ17)]
$(u_{0}\ppr_{0} x_{0}) \trr_{2} u_{1}+ (u_{0}\trl_{0} x_{0}) \ast_{2} u_{1}=u_{0} \ast_{2} (x_{0} \trr_{2} u_{1}+u_{1}\trl_{3} x_{0}),$
\item[(ZZ18)]
$\omega_{0}(u_{0}, v_{0}) \trr_{2} u_{1}+ (u_{0} \ast_{0} v_{0}) \ast_{2} u_{1}=u_{0} \ast_{2} (v_{0} \ast_{2} u_{1}+u_{1} \ast_{3} v_{0}),$
\item[(ZZ19)]
$(u_{1}\trl_{3} x_{0}) \ast_{3} u_{0}=u_{1} \ast_{3} (x_{0} \trr_{0} u_{0}+u_{0}\trl_{0} x_{0}),$
\item[(ZZ20)]
$(u_{1} \ast_{3} u_{0}) \ast_{3} v_{0}=u_{1}\trl_{3} (\omega_{0}(u_{0}, v_{0}) + \omega_{0}(v_{0}, u_{0}))+ u_{1} \ast_{3} (u_{0} \ast_{0} v_{0}+v_{0} \ast_{0} u_{0}),$
\item[(ZZ21)]
$(u_{1}\trl_{3} x_{0})\trl_{3} y_{0}=u_{1}\trl_{3} (x_{0}\cdot y_{0}+y_{0}\cdot x_{0}),$
\item[(ZZ22)]
$(u_{1} \ast_{3} u_{0})\trl_{3} x_{0}=u_{1}\trl_{3} (u_{0}\ppr_{0} x_{0}+x_{0} \ppl_{0} u_{0})+ u_{1} \ast_{3} (u_{0}\trl_{0} x_{0}+x_{0} \trr_{0} u_{0}),$
\item[(ZZ23)]
$(x_{0} \trr_{2} u_{1}) \ast_{3} v_{0}=x_{0} \trr_{2} (u_{1} \ast_{3} v_{0}+v_{0} \ast_{2} u_{1}),$
\item[(ZZ24)]
$ (u_{0} \ast_{2} u_{1}) \ast_{3} v_{0}= u_{0} \ast_{2} (u_{1} \ast_{3} v_{0}+v_{0} \ast_{2} u_{1}),$
\item[(ZZ25)]
$(x_{0} \trr_{2} u_{1})\trl_{3} y_{0}=x_{0} \trr_{2} (u_{1}\trl_{3} y_{0}+y_{0} \trr_{2} u_{1}),$
\item[(ZZ26)]
$(u_{0} \ast_{2} u_{1})\trl_{3} y_{0}=u_{0} \ast_{2} (u_{1}\trl_{3} y_{0}+y_{0} \trr_{2} u_{1}),$

\item[(ZZ27)]
$(u_{1}\trl_{3} x_{0}) \ast_{1} v_{1}=u_{1} \ast_{1} (x_{0} \trr_{2} v_{1}+v_{1}\trl_{3} x_{0}),$
\item[(ZZ28)]
$(u_{1} \ast_{3} u_{0}) \ast_{1} v_{1}= u_{1} \ast_{1} (u_{0} \ast_{2} v_{1}+v_{1} \ast_{3} u_{0}),$
\item[(ZZ29)]
$(u_{1} \ast_{1} v_{1}) \ast_{3} u_{0}=u_{1} \ast_{1} (v_{1} \ast_{3} u_{0}+u_{0} \ast_{2} v_{1}),$
\item[(ZZ30)]
$(u_{1} \ast_{1} v_{1})\trl_{3} x_{0}=u_{1} \ast_{1} (v_{1}\trl_{3} x_{0}+x_{0} \trr_{2} v_{1}),$
\item[(ZZ31)]
$\sigma(x_{0} \trr_{2} u_{1})=x_{0}\cdot \sigma(u_{1}) + x_{0} \ppl_{0} d(u_{1}),$
\item[(ZZ32)]
$\sigma(u_{0} \ast_{2} u_{1})=u_{0}\ppr_{0} \sigma(u_{1}) + \omega_{0}(u_{0}, d(u_{1})),$
\item[(ZZ33)]
$d(x_{0} \trr_{2} u_{1})=x_{0} \trr_{0} d(u_{1}),$
\item[(ZZ34)]
$d(u_{0} \ast_{2} u_{1})= u_{0}\trl_{0}\sigma(u_{1}) + u_{0} \ast_{0} d(u_{1}),$
\item[(ZZ35)]$\sigma(u_{1}\trl_{3} x_{0})=\sigma(u_{1})\cdot x_{0}+ d(u_{1})\ppr_{0} x_{0},$
\item[(ZZ36)]$\sigma(u_{1} \ast_{3} u_{0})=\sigma(u_{1}) \ppl_{0} u_{0}+ \omega_{0}(d(u_{1}), u_{0}),$
\item[(ZZ37)]$d(u_{1}\trl_{3} x_{0}) =d(u_{1})\trl_{0} x_{0},$
\item[(ZZ38)]$d(u_{1} \ast_{3} u_{0}))=\sigma(u_{1}) \trr_{0} u_{0} + d(u_{1}) \ast_{0} u_{0},$
\item[(ZZ39)]$\sigma(u_{1}) \trr_{2} v_{1} + d(u_{1}) \ast_{2} v_{1}=u_{1} \ast_{1} v_{1},$
\item[(ZZ40)]$u_{1}\trl_{3}\sigma(v_{1}) + u_{1} \ast_{3} d(v_{1})=u_{1} \ast_{1} v_{1}.$
\end{enumerate}
This is the canonical way to obtain a  Zinbiel 2-algebra from a Zinbiel algebra.
\end{examples}

\begin{theorem}
 Let $(Z_{1},Z_{0}, \varphi) $ be a Zinbiel 2-algebra and $(E_{1},E_{0}, \varphi_E)$ a 2-vector space containing $(Z_{1},Z_{0}, \varphi)$ as a 2-vector subspace. Suppose that we have a Zinbiel 2-algebra structure on $(E_{1},E_{0}, \varphi_E)$ such that $(Z_{1},Z_{0}, \varphi)$ is a sub-Zinbiel 2-algebra. Then there is an isomorphism of Zinbiel 2-algebras $(E_{1},E_{0}, \varphi_E) \cong Z \natural V$ which stabilizes $(Z_{1},Z_{0}, \varphi)$ and co-stabilizes $(V_{1},V_{0},d)$.
\end{theorem}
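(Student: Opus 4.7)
The plan is to reconstruct an extending datum $\Omega(Z,V)$ directly from the Zinbiel 2-algebra structure on $(E_1, E_0, \varphi_E)$ by choosing vector space complements and reading off components along the splitting. Since $(Z_1, Z_0, \varphi)$ sits inside $(E_1, E_0, \varphi_E)$ as a 2-vector subspace, I fix linear retractions $p_i : E_i \to Z_i$ with $p_i \circ \iota_i = \id_{Z_i}$, set $V_i := \ker(p_i)$, and let $\pi_i : E_i \to V_i$ be the canonical projection. This yields direct sum decompositions $E_i = Z_i \oplus V_i$ as vector spaces. The map $\varphi_E$ then splits: its restriction to $V_1$ has components $\sigma := p_0 \circ \varphi_E|_{V_1} : V_1 \to Z_0$ and $d := \pi_0 \circ \varphi_E|_{V_1} : V_1 \to V_0$, giving the 2-vector space $(V_1, V_0, d)$.

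Next, I would define the twenty-four bilinear maps by decomposing the multiplications $\cdot_{E_i}$ and the actions $\trr, \trl$ on $E$ along the splitting $E_i = Z_i \oplus V_i$. Explicitly, for inputs with one entry in $Z$ and one in $V$, I set for instance $x_0 \ppl_0 v_0 := p_0(x_0 \cdot_{E_0} v_0)$ and $x_0 \trr_0 v_0 := \pi_0(x_0 \cdot_{E_0} v_0)$, and similarly for all cross terms arising from $\cdot_{E_1}$, from the action $\trr$ of $E_0$ on $E_1$, and from the action $\trl$. For inputs with both entries in $V$, the product in $E$ decomposes into a $Z$-component (defining $\omega_j$) and a $V$-component (defining $\ast_j$). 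Because $(Z_1, Z_0, \varphi)$ is a subalgebra, products and actions of pairs from $Z$ remain in $Z$ and equal the original operations, so feeding $\Omega(Z,V)$ back through Definition~\ref{def:01} recovers exactly the given Zinbiel 2-algebra structure on $E$ under the identification $E_i = Z_i \oplus V_i$.

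With these definitions, the list of axioms (Z1)--(Z120) of Theorem~\ref{thm:unifyprod} follows automatically: each identity is obtained by applying one of the nine defining Zinbiel 2-algebra identities in $E$ to arbitrary inputs written as $\iota_i(x_i) + v_i$ and then projecting onto $Z_i$ and $V_i$ via $p_i$ and $\pi_i$. Finally, the candidate isomorphism $\Phi = (\Phi_1, \Phi_0) : Z \,\natural\, V \to E$ is defined by $\Phi_i(x_i, v_i) := \iota_i(x_i) + v_i$, which is a linear bijection with inverse $e_i \mapsto (p_i(e_i), \pi_i(e_i))$. By the construction of the $\Omega(Z,V)$-operations, $\Phi$ intertwines $\circ_i$ with $\cdot_{E_i}$, the unified product actions with the $E$-actions, and the unified-product boundary with $\varphi_E$. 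Stabilization of $(Z_1, Z_0, \varphi)$ reduces to $\Phi_i(\iota_i(x_i), 0) = \iota_i(x_i)$, and co-stabilization of $(V_1, V_0, d)$ reduces to $\pi_i \circ \Phi_i = \mathrm{pr}_{V_i}$, both immediate.

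The main obstacle is not conceptual but bookkeeping: the sheer size of the system (Z1)--(Z120) means one must be systematic in matching the indices $j = 0,1,2,3$ of $\omega_j, \ast_j, \ppl_j, \ppr_j, \trl_j, \trr_j$ with the correct projection of the correct axiom in $E$. The key observation that makes this manageable is that each axiom cleanly corresponds to projecting one of the nine Zinbiel 2-algebra identities in $E$ onto a specific direct summand, so the verification factorizes through the decomposition already made, and no new identities need to be introduced.
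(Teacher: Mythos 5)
Your proposal is correct and follows essentially the same route as the paper: choose retractions $p_i:E_i\to Z_i$, set $V_i=\Ker (p_i)$, define the extending datum by taking the $Z_i$-component via $p_i$ and the $V_i$-component via $id-p_i$ of each product and action (and $\sigma=p_0\circ\varphi_E|_{V_1}$), and check that $\psi_i(x_i,u_i)=x_i+u_i$ is an isomorphism onto $E$ stabilizing $Z$ and co-stabilizing $V$. The only cosmetic difference is that you verify the axioms (Z1)--(Z120) by projecting the nine defining identities of $E$ directly, whereas the paper transports the structure through $\psi^{-1}$ and matches it against Definition~\ref{def:01}; these are equivalent.
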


\begin{proof} Let $p_{i}: E_{i} \rightarrow Z_{i}$ be linear maps such that $p_{i}\left(x_{i}\right)=x_{i}$ for $i=0,1$ and $x_{i} \in Z_{i} .$ Then $V_{i}:=\operatorname{Ker}\left(p_{i}\right)$ is a subspace of $E_{i}$ and a complement of $Z_{i}$ in $E_{i}$. Define the extending datum of $(Z_{1}, Z_{0}, \varphi)$ by $(V_{1}, V_{0}, d)$  by the following formulas:
$$
\begin{aligned}
\rightharpoonup_{i}: V_{i} \oplus Z_{i} \rightarrow Z_{i}, \quad u_{i} \rightharpoonup_{i} x_{i} &:=p_{i}\left(u_{i}\circ_{i} x_{i}\right), \\
\leftharpoonup_{i}: Z_{i} \oplus V_{i} \rightarrow Z_{i},\quad x_{i} \leftharpoonup_{i} u_{i} &:= p_{i}\bigl(x_{i}\circ_{i} u_{i}\bigl),\\
\triangleright_{i}: Z_{i} \oplus V_{i} \rightarrow V_{i},\quad x_{i}\triangleright_{i} u_{i} &:= x_{i}\circ_{i} u_{i} - p_{i}\bigl(x_{i}\circ_{i} u_{i}\bigl),\\
\triangleleft_{i}: V_{i} \oplus Z_{i} \rightarrow V_{i},u_{i} \triangleleft_{i} x_{i} &:= u_{i}\circ_{i} x_{i} - p_{i} \bigl(u_{i}\circ_{i} x_{i}\bigl),\\
\omega_{i}: V_{i} \oplus V_{i} \rightarrow Z_{i},\quad\omega_{i}(u_{i}, v_{i}) &:= p_{i} \bigl(u_{i}\circ_{i} v_{i}\bigl),\\
*_{i}: V_{i} \oplus V_{i} \rightarrow V_{i},\quad u_{i}\ast_{i} v_{i} &:= u_{i}\circ_{i} v_{i} - p_{i} \bigl(u_{i}\circ_{i} v_{i}\bigl)\\
\end{aligned}
$$
$$
\begin{aligned}
\rightharpoonup_{2}: V_{0} \oplus Z_{1} \rightarrow Z_{1},  \quad u_{0} \rightharpoonup_{2} x_{1} &:=p_{1}\left(u_{0} \trr_{} x_{1}\right), \\
\leftharpoonup_{2}: Z_{0} \oplus V_{1} \rightarrow Z_{1},  \quad x_{0} \leftharpoonup_{2} u_{1} &:=p_{1}\left(x_{0}\trr_{} u_{1}\right), \\
\triangleright_{2}: Z_{0} \oplus V_{1} \rightarrow V_{1},\quad x_{0} \triangleright_{2} u_{1} &:=x_{0}\trr_{} u_{1}-p_{1}\left(x_{0}\trr_{} u_{1}\right), \\
\triangleleft_{2}: V_{0} \oplus Z_{1} \rightarrow V_{1},\quad u_{0} \triangleleft_{2} x_{1} &:=u_{0}\trr_{} x_{1}-p_{1}\left(u_{0}\trr_{} x_{1}\right), \\
\omega_{2}: V_{0} \oplus V_{1} \rightarrow  Z_{1},\quad \omega_{2}\left(u_{0}, u_{1}\right) &:=p_{1}\left(u_{0}\trr_{} u_{1}\right), \\
*_{2}: V_{0} \oplus V_{1} \rightarrow V_{1},\quad u_{0} *_{2} u_{1} &:=u_{0}\trr_{} u_{1}-p_{1}\left(u_{0}\trr_{}u_{1}\right), \\
\end{aligned}
$$
$$
\begin{aligned}
\rightharpoonup_{3}: V_{1} \oplus Z_{0} \rightarrow Z_{1},\quad u_{1} \rightharpoonup_{3} x_{0} &:=-p_{1}\left(x_{0} \trl_{} u_{1}\right), \\
 \leftharpoonup_{3}: Z_{1} \oplus V_{0} \rightarrow Z_{1},\quad x_{1} \leftharpoonup_{3} u_{0} &:=-u_{0} \trl_{} x_{1}+p_{1}\left(u_{0} \trl_{} x_{1}\right), \\
 \triangleright_{3}: Z_{1} \oplus V_{0} \rightarrow V_{1},\quad x_{1} \triangleright_{3} u_{0} &:=-u_{0}\trl_{}x_{1}+p_{1}\left(u_{0}\trl_{} x_{1}\right), \\
\triangleleft_{3}: V_{1} \oplus Z_{0} \rightarrow V_{1},\quad u_{1} \triangleleft_{3} x_{0} &:=-x_{0}\trl_{}u_{1}+p_{1}\left(x_{0}\trl_{} u_{1}\right), \\
\omega_{3}: V_{1} \oplus V_{0} \rightarrow Z_{1}, \quad \omega_{3}\left(u_{1}, u_{0}\right) &:=p_{1}\left(u_{1}\trl_{} u_{0}\right), \\
 *_{3}: V_{1} \oplus V_{0} \rightarrow V_{1},\quad u_{1} *_{2} u_{0} &:=u_{1}\trl_{} u_{0}-p_{1}\left(u_{1}\trl_{}u_{0}\right), \\
\sigma: V_{1} \rightarrow Z_{0}, \quad \sigma\left(u_{1}\right) &:=p_{0}\left(\varphi_E\left(u_{1}\right)\right),
\end{aligned}
$$
for any $i=0,1, x_{i} \in Z_{i}$ and $u_{i}, v_{i} \in V_{i}$. First, the above maps are all well defined. We shall prove that $\Omega(Z, V)=\left(\leftharpoonup_{j}, \rightharpoonup_{j}, \triangleleft_{j}, \triangleright_{j}, \omega_{j}, *_{j},\sigma ; j=0,1,2,3\right)$ is a Zinbiel 2-extending structure of $(Z_{1}, Z_{0}, \varphi)$ by $(V_{1}, V_{0}, d)$  and
$$
\psi=\left(\psi_{0}, \psi_{1}\right): Z \natural V \rightarrow (E_{1},E_{0}, \varphi_E), \quad \psi_{i}\left(x_{i}, u_{i}\right):=x_{i}+u_{i}
$$
is an isomorphism of Zinbiel 2-algebras that stabilizes $(Z_{1},Z_{0}, \varphi)$ and co-stabilizes $(V_{1},V_{0}, d)$. It is easy to verify that for $i=0,1$ and $x_{i} \in E_{i}, \psi^{-1}=\left(\psi_{0}^{-1}, \psi_{1}^{-1}\right): (E_{1},E_{0}, \varphi_E) \rightarrow Z \natural V, \quad \psi_{i}^{-1}\left(x_{i}\right):=\left(p_{i}\left(x_{i}\right), x_{i}-p_{i}\left(x_{i}\right)\right)$ is an inverse of $\psi: Z \natural V \rightarrow (E_{1},E_{0}, \varphi_E)$ as 2-vector space. Therefore, there is a unique strict Zinbiel 2-algebra structure on $Z \natural V$ such that $\psi$ is an isomorphism of strict Zinbiel 2-algebras and this unique Zinbiel 2-algebra structure is given by
$$
\begin{aligned}
(x_{i}, u_{i})\circ_{i} (y_{i}, v_{i})&:=\psi_{i}^{-1}(\psi_{i}(x_{i}, u_{i})\circ_{i} \psi_{i}(y_{i}, v_{i})), \\
(x_{0}, u_{0})\trr_{}(x_{1}, u_{1})&:=\psi_{1}^{-1}\left(\psi_{0}\left(x_{0}, u_{0}\right)\trr_{} \psi_{1}\left(x_{1}, u_{1}\right)\right),\\
(x_{1}, u_{1})\trl_{}(x_{0}, u_{0})&:=\psi_{1}^{-1}\left(\psi_{1}\left(x_{1}, u_{1}\right)\trl_{} \psi_{0}\left(x_{0}, u_{0}\right)\right),
\end{aligned}
$$
for all $x_{i}, y_{i} \in Z_{i}$ and $u_{i}, v_{i} \in V_{i} .$ Then it is sufficient to prove that this Zinbiel 2-algebra structure coincides with the one defined by Definition \ref{def:01} associated with the system $\left(\leftharpoonup_{j}, \rightharpoonup_{j}, \triangleleft_{j}, \triangleright_{j}, \omega_{j}, *_{j}, \sigma ; j=0,1,2,3\right)$. Indeed, for any $x_{i}, y_{i} \in Z_{i}$ and $u_{i}, v_{i} \in V_{i}$
\begin{eqnarray*}
&&(x_{i}, u_{i})\circ_{i}(y_{i}, v_{i}) \\
&=& \psi_{i}^{-1}(\psi_{i}(x_{i}, u_{i})\circ_{i} \psi_{i}(y_{i}, v_{i}))\\
&=& \psi_{i}^{-1}(x_{i}y_{i}+x_{i}\circ_{i} v_{i}+u_{i}\circ_{i} y_{i}+u_{i}\circ_{i} v_{i}) \\
&=&( x_{i}y_{i}+p_{i}(x_{i}\circ_{i} v_{i})+p_{i}(u_{i}\circ_{i} y_{i})+p_{i}\left(u_{i}\circ_{i} v_{i}\right),\\
&&x_{i}\circ_{i} v_{i}-p_{i}(x_{i}\circ_{i} v_{i})+u_{i}\circ_{i} y_{i}-p_{i}(u_{i}\circ_{i} y_{i})+u_{i}\circ_{i} v_{i}-p_{i}\left(u_{i}\circ_{i} v_{i}\right))\\
&=&( x_{i}\cdot y_{i} + x_{i} \ppl_{i} v_{i} + u_{i}\ppr_{i} y_{i} + \omega_{i}(u_{i}, v_{i}),\  x_{i} \trr_{i} v_{i} + u_{i}\trl_{i} y_{i} + u_{i} \ast_{i} v_{i} ),
\end{eqnarray*}
\begin{eqnarray*}
&&(x_{0}, u_{0})\circ_{1}(x_{1}, u_{1})\\
&=&\psi_{1}^{-1}\left(x_{0}\trr_{} x_{1}+x_{0}\trr_{}u_{1}+u_{0}\trr_{} x_{1}+u_{0}\trr_{} u_{1}\right)\\
&=&(x_{0} x_{1}+p_{1}(x_{0}\trr_{}u_{1})+p_{1}(u_{0}\trr_{} x_{1})+p_{1}(u_{0}\trr_{} u_{1}), \\
&&x_{0}\trr_{}u_{1}-p_{1}(x_{0}\trr_{}u_{1})+u_{0}\trr_{} x_{1}-p_{1}(u_{0}\trr_{} x_{1})+u_{0}\trr_{} u_{1}-p_{1}(u_{0}\trr_{} u_{1}))\\
&=&\big( x_{0}\cdot x_{1} + x_{0} \ppl_{2} u_{1} + u_{0}\ppr_{2} x_{1} + \omega_{2}(u_{0}, u_{1}),\  x_{0} \trr_{2} u_{1} + u_{0}\trl_{2} x_{1} + u_{0} \ast_{2} u_{1} \big),
\end{eqnarray*}
\begin{eqnarray*}
&&(x_{1}, u_{1})\trl_{}(x_{0}, u_{0})\\
&=&\psi_{1}^{-1}\left(x_{1}\trl_{} x_{0}+x_{1}\trl_{}u_{0}+u_{1}\trl_{} x_{0}+u_{1}\trl_{} u_{0}\right)\\
&=&(x_{1} x_{0}+p_{1}(x_{1}\trl_{}u_{0})+p_{1}(u_{1}\trl_{} x_{0})+p_{1}(u_{1}\trl_{} u_{0}), \\
&&x_{1}\trl_{}u_{0}-p_{1}(x_{1}\trl_{}u_{0})+u_{1}\trl_{} x_{0}-p_{1}(u_{1}\trl_{} x_{0})+u_{1}\trl_{} u_{0}-p_{1}(u_{1}\trl_{} u_{0}))\\
&=&\big( x_{1}\cdot x_{0} + x_{1} \ppl_{3} u_{0} + u_{1}\ppr_{3} x_{0} + \omega_{3}(u_{1}, u_{0}),\  x_{1} \trr_{3} u_{0} + u_{1}\trl_{3} x_{0} + u_{1} \ast_{3} u_{0} \big).
\end{eqnarray*}
The proof is completed.
\end{proof}

By the above theorem, the classification of all crossed modules of Zinbiel 2-algebra structure on $(E_{1},E_{0}, \varphi_E)$ that containing $(Z_{1},Z_{0}, \varphi)$ as a sub-Zinbiel 2-algebra reduces to the classification of all unified products $(E_1,E_0,\varphi_E)$ associated with all Zinbiel 2-extending structures $\Omega(Z, V)=\left(\leftharpoonup_{j}, \rightharpoonup_{j}, \triangleleft_{j}, \triangleright_{j}, \omega_{j}, *_{j}, \sigma ; j=0,1,2,3\right)$ for a given 2-vector space $(V_{1},V_{0}, d)$ such that $V_{i}$ is a complement of $Z_{i}$ in $E_{i}$.

\begin{lemma}
Suppose that $\Omega(Z, V)=\left(\leftharpoonup_{j}, \rightharpoonup_{j}, \triangleleft_{j}, \triangleright_{j}, \omega_{j}, *_{j}, \sigma ; j=0,1,2,3\right)$ and $\Omega^{\prime}(Z, V)=\left(\leftharpoonup^{\prime}_{j}, \rightharpoonup^{\prime}_{j}, \triangleleft^{\prime}_{j}, \triangleright^{\prime}_{j}, \omega^{\prime}_{j}, *^{\prime}_{j}, \sigma^{\prime} ; j=0,1,2,3\right)$ are two Zinbiel 2-extending structures of $(Z_{1}, Z_{0}, \varphi)$ by $(V_{1}, V_{0}, d)$  and $Z\natural V, Z\natural^{\prime} V$ are the associated unified products.
Then there exists a bijection between the set of all morphisms of
Zinbiel 2-algebras $\phi: Z\natural V \rightarrow Z\natural^{\prime} V$ which stabilizes $Z$ and the set of $\left(r_{i}, s_{i} ; i=0,1\right)$, where
$r_{i}: V_{i} \rightarrow Z_{i}$ and $s_{i}: V_{i} \rightarrow V_{i}$
are linear maps satisfying the following compatibility conditions
for $i=0,1$ and any $x_{i} \in Z_{i}, u_{i}, v_{i} \in V_{i}$ :

\begin{enumerate}
\item[(H1)] $x_{i} \ppl_{i} v_{i}+r_{i}(x_{i} \trr_{i} v_{i})=x_{i}\cdot^{\prime} r_{i}(v_{i})+ x_{i}\ppl_{i}^{\prime} s_{i}(v_{i}),$
\item[(H2)] $u_{i}\ppr_{i} y_{i}+ r_{i}(u_{i}\trl_{i} y_{i})=r_{i}(u_{i})\cdot^{\prime} y_{i}+ s_{i}(u_{i})\ppr_{i}^{\prime} y_{i},$
\item[(H3)] $\omega_{i}(u_{i}, v_{i})+ r_{i}(u_{i} \ast_{i} v_{i})=r_{i}(u_{i})\cdot^{\prime} r_{i}(v_{i})+r_{i}(u_{i}) \ppl_{i}^{\prime} s_{i}(v_{i})+s_{i}(u_{i})\ppr_{i}^{\prime}r_{i}(v_{i})+ \omega_{i}^{\prime}(s_{i}(u_{i}), s_{i}(v_{i})),$
\item[(H4)]$s_{i}(x_{i} \trr_{i} v_{i})=x_{i}\trr_{i}^{\prime} s_{i}(v_{i})$,
\item[(H5)]$s_{i}(u_{i}\trl_{i} y_{i})=s_{i}(u_{i})\trl_{i}^{\prime} y_{i},$
\item[(H6)]$s_{i}(u_{i} \ast_{i} v_{i}))=r_{i}(u_{i}) \trr_{i}^{\prime} s_{i}(v_{i})+s_{i}(u_{i})\trl_{i}^{\prime}r_{i}(v_{i})+ s_{i}(u_{i}) \ast_{i}^{\prime} s_{i}(v_{i}),$
\item[(H7)] $\varphi(r_{1}(u_{1}))+\sigma^{\prime}(x_{1})+\sigma^{\prime}r_{1}(u_{1})=\sigma(u_{1})+r_{0}d(u_{1}),$
\item[(H8)]
$ds_{1}(u_{1})=s_{0}d(u_{1}),$
\item[(H9)] $x_{0} \ppl_{2} u_{1}+r_{1}(x_{0} \trr_{2} u_{1})-x_{0}\cdot^{\prime}r_{1}(u_{1})-x_{0}\ppl^{\prime}_{2} s_{1}(u_{1})=0,$
\item[(H10)] $u_{0}\ppr_{2} x_{1}+ r_{1}(u_{0}\trl_{2} x_{1})-r_{0}(u_{0})\cdot^{\prime} x_{1}-s_{0}(u_{0})\ppr^{\prime}_{2} x_{1}=0,$
 \item[(H11)]$\omega_{2}(u_{0}, u_{1})+ r_{1}(u_{0} \ast_{2} u_{1})-r_{0}(u_{0})\cdot^{\prime} r_{1}(u_{1})-r_{0}(u_{0}) \ppl^{\prime}_{2} s_{1}(u_{1})-s_{0}(u_{0})\ppr^{\prime}_{2}r_{1}(u_{1})-\omega^{\prime}_{2}(s_{0}(u_{0}), s_{1}(u_{1}))=0,$
\item[(H12)] $s_{1}(x_{0} \trr_{2} u_{1})-x_{0}\trr^{\prime}_{2} s_{1}(u_{1})=0,$
\item[(H13)] $s_{1}(u_{0}\trl_{2} x_{1})-s_{0}(u_{0})\trl^{\prime}_{2} x_{1}=0,$
\item[(H14)] $s_{1}(u_{0} \ast_{2} u_{1}))-r_{0}(u_{0}) \trr^{\prime}_{2} s_{1}(u_{1})-s_{0}(u_{0})\trl^{\prime}_{2}r_{1}(u_{1})
-s_{0}(u_{0}) \ast^{\prime}_{2} s_{1}(u_{1})=0,$
\item[(H15)] $x_{1} \ppl_{3} u_{0}+r_{1}(x_{1} \trr_{3} u_{0})=x_{1}\cdot^{\prime}r_{0}(u_{0}) + x_{1}\ppl^{\prime}_{3} s_{0}(u_{0}),$
\item[(H16)] $u_{1}\ppr_{3} x_{0} + r_{1}(u_{1}\trl_{3} x_{0})=r_{1}(u_{1})\cdot^{\prime} x_{0}+ s_{1}(u_{1})\ppr^{\prime}_{3} x_{0},$
\item[(H17)] $\omega_{3}(u_{1}, u_{0}) + r_{1}(u_{1} \ast_{3} u_{0})=r_{1}(u_{1})\cdot^{\prime}r_{0}(u_{0})+r_{1}(u_{1}) \ppl^{\prime}_{3} s_{0}(u_{0})+s_{1}(u_{1})\ppr^{\prime}_{3}r_{0}(u_{0})
 + \omega^{\prime}_{3}(s_{1}(u_{1}), s_{0}(u_{0})),$
\item[(H18)] $ s_{1}(x_{1} \trr_{3} u_{0})=x_{1}\trr^{\prime}_{3} s_{0}(u_{0}),$
\item[(H19)] $ s_{1}(u_{1}\trl_{3} x_{0})=s_{1}(u_{1})\trl^{\prime}_{3} x_{0},$
\item[(H20)] $s_{1}(u_{1} \ast_{3} u_{0}))=r_{1}(u_{1}) \trr^{\prime}_{3} s_{0}(u_{0})+s_{1}(u_{1})\trl^{\prime}_{3}r_{0}(u_{0})
 + s_{1}(u_{1}) \ast^{\prime}_{3} s_{0}(u_{0}).$
\end{enumerate}
\end{lemma}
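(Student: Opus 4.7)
The plan is the standard one for ``extending-structure'' type bijections: first parametrize stabilizing maps, then translate each axiom of a Zinbiel $2$-algebra morphism into equations in the unknowns $(r_i,s_i)$.

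First, I claim that any linear map $\phi_i:Z_i\oplus V_i\to Z_i\oplus V_i$ stabilizing $(Z_1,Z_0,\varphi)$ must satisfy $\phi_i(x_i,0)=(x_i,0)$, so it is uniquely determined by its restriction to the $V_i$-component. Writing
\[
\phi_i(0,u_i)=\bigl(r_i(u_i),\,s_i(u_i)\bigr)
\]
with $r_i:V_i\to Z_i$ and $s_i:V_i\to V_i$ linear, I obtain
\[
\phi_i(x_i,u_i)=\bigl(x_i+r_i(u_i),\,s_i(u_i)\bigr).
\]
This defines a bijection (on the level of linear maps stabilizing $Z$) between such $\phi=(\phi_0,\phi_1)$ and quadruples $(r_0,s_0,r_1,s_1)$.

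Next I would show that $\phi$ is a morphism of Zinbiel $2$-algebras $Z\natural V\to Z\natural' V$ if and only if (H1)--(H20) hold. By definition this morphism property splits into six axioms for the $\circ_i$, $\trr$, $\trl$ operations and one axiom for the compatibility $\varphi'_E\circ\phi_1=\phi_0\circ\varphi_E$. For each of the six multiplicative axioms I expand both sides using Definition~\ref{def:01} and the form of $\phi_i$; every side is a pair in $Z_i\oplus V_i$, so equality is equivalent to equality of the two components separately. The key simplification is linearity: every bilinear expression appearing splits into four pieces according to whether each input lies in $Z_i$ or $V_i$, and I compare the pieces one at a time by specializing inputs. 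Concretely, starting from $\phi_i\bigl((x_i,u_i)\circ_i(y_i,v_i)\bigr)=\phi_i(x_i,u_i)\circ'_i\phi_i(y_i,v_i)$:
\begin{itemize}
\item putting $u_i=v_i=0$ gives a tautology ($Z_i$ is a common subalgebra);
\item putting $u_i=0=y_i$ and comparing the two components yields (H1) and (H4);
\item putting $x_i=0=v_i$ yields (H2) and (H5);
\item putting $x_i=y_i=0$ yields (H3) and (H6).
\end{itemize}
Repeating the same decomposition for $\phi_1\bigl((x_0,u_0)\trr(x_1,u_1)\bigr)=\phi_0(x_0,u_0)\trr'\phi_1(x_1,u_1)$ produces (H9)--(H14), and for $\phi_1\bigl((x_1,u_1)\trl(x_0,u_0)\bigr)=\phi_1(x_1,u_1)\trl'\phi_0(x_0,u_0)$ it produces (H15)--(H20). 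Finally, the compatibility
\[
\varphi'_E\bigl(x_1+r_1(u_1),\,s_1(u_1)\bigr)=\phi_0\bigl(\varphi(x_1)+\sigma(u_1),\,d(u_1)\bigr)
\]
immediately unfolds into (H7) on the $Z_0$-component and (H8) on the $V_0$-component, the $x_1$-part being automatic since $\phi$ stabilizes $Z$ and $\varphi'|_{Z_1}=\varphi$.

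The conceptual content is short; the only real obstacle is bookkeeping: each of the three action/multiplication axioms produces six scalar identities (three in each component), so one must be careful to match each resulting identity with its labelled counterpart (H$k$). Once the splitting described above is performed systematically, verification of each (H$k$) reduces to reading off the coefficient of a particular monomial in $x_i,y_i,u_i,v_i$, and the equivalence in both directions follows without further input.
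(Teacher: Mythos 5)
Your proposal is correct and follows essentially the same route as the paper: parametrize a stabilizing morphism as $\phi_i(x_i,u_i)=(x_i+r_i(u_i),s_i(u_i))$, then expand the homomorphism conditions for $\circ_0,\circ_1,\trr,\trl$ and the compatibility with $\varphi_E$ componentwise and read off (H1)--(H20); your device of specializing arguments to zero is just a repackaging, via bilinearity, of the paper's term-by-term matching of the full expansions. Incidentally, your unfolding of the $\varphi_E$-compatibility into $\varphi(r_1(u_1))+\sigma'(s_1(u_1))=\sigma(u_1)+r_0 d(u_1)$ is the type-correct form of (H7), whose printed statement in the paper contains a typographical slip.
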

Under the above bijection the morphism of Zinbiel 2-algebras $\phi=\phi_{\left(r_{i}, s_{i} ; i=0,1\right)}=\left(\phi_{0}, \phi_{1}\right): Z \natural V \rightarrow Z \natural^{\prime} V$ corresponding to $\left(r_{i}, s_{i} ; i=0,1\right)$ is given by:
$$
\phi_{i}\left(x_{i}, u_{i}\right)=\left(x_{i}+r_{i}\left(u_{i}\right), s_{i}\left(u_{i}\right)\right),
$$
for any $x_{i} \in Z_{i}, u_{i} \in V_{i}$ and $i=0,1 .$ Moreover, $\phi=\phi_{\left(r_{i}, s_{i} ; i=0,1\right)}$ is an isomorphism if and only if $s_{i}: V_{i} \rightarrow V_{i}$ is an isomorphism and ${\phi}={\phi}_{\left(r_{i}, s_{i} ; i=0,1\right)}$ co-stabilizes $(V_{1},V_{0}, d)$ if and only if $s_{i}=\id_{V_{i}}$.

\begin{proof}
Suppose that $\phi=\left(\phi_{0}, \phi_{1}\right): Z \natural V \rightarrow Z \natural^{\prime} V$ is a linear functor.
Then it is uniquely determined by linear map $r_{i}: V_{i} \rightarrow Z_{i}$ and $s_{i}: V_{i} \rightarrow V_{i}$ such that $\phi_{i}\left(x_{i}, u_{i}\right)=\left(x_{i}+r_{i}\left(u_{i}\right), s_{i}\left(u_{i}\right)\right)$ for $i=0,1$ and all $x_{i} \in Z_{i}, v_{i} \in V_{i}.$ In fact, let $\phi_{i}\left(0, u_{i}\right)=\left(r_{i}\left(u_{i}\right), s_{i}\left(u_{i}\right)\right) \in Z_{i} \oplus V_{i}$ for all $u_{i} \in V_{i}$. Then $\phi_{i}\left(x_{i}, u_{i}\right)=(x_{i}+r_{i}(u_{i}), s_{i}(u_{i})) \in Z_{i} \oplus V_{i}$. Next, the paper proves that $\phi=\phi_{\left(r_{i}, s_{i} i=0,1\right)}$ is a morphism of crossed modules of Zinbiel 2-algebras if and only if the compatibility conditions $(H1)-(H12)$ hold. It is sufficient to prove the equations
\begin{eqnarray}
&&\phi_{i}((x_{i}, u_{i})\circ_{i}(y_{i}, v_{i})) =\phi_{i}\left(x_{i}, u_{i}\right)\circ_{i} \phi_{i}\left(y_{i}, v_{i}\right),  i=0,1,\\
&&\left(\varphi+\sigma^{\prime}+d\right) \phi_{1}\left(x_{1}, u_{1}\right)=\phi_{0}\left(\varphi\left(x_{1}\right)+\sigma\left(u_{1}\right), d\left(u_{1}\right)\right), \\
&&\phi_{1}((x_{0}, u_{0})\trr_{}(x_{1}, u_{1})) =\phi_{0}(x_{0},u_{0})\circ^{\prime}_{2} \phi_{1}(x_{1},u_{1}),\\
&&\phi_{1}((x_{1}, u_{1})\trl_{}(x_{0}, u_{0})) =\phi_{1}(x_{1},u_{1})\circ^{\prime}_{3} \phi_{0}(x_{0},u_{0}).
\end{eqnarray}

First, consider Equation $(1)$, by direct computations,
\begin{eqnarray*}
&&\phi_{i}((x_{i}, u_{i})\circ_{i}(y_{i}, v_{i})) \\
&=&\phi_{i}\big( x_{i}\cdot y_{i} + x_{i} \ppl_{i} v_{i} + u_{i}\ppr_{i} y_{i} + \omega_{i}(u_{i}, v_{i}),\  x_{i} \trr_{i} v_{i} + u_{i}\trl_{i} y_{i} + u_{i} \ast_{i} v_{i} \big)\\
&=&(x_{i}\cdot y_{i} + x_{i} \ppl_{i} v_{i} + u_{i}\ppr_{i} y_{i} + \omega_{i}(u_{i}, v_{i})
+r_{i}(x_{i} \trr_{i} v_{i}) + r_{i}(u_{i}\trl_{i} y_{i}) + r_{i}(u_{i} \ast_{i} v_{i}),\\
&& s_{i}(x_{i} \trr_{i} v_{i}) + s_{i}(u_{i}\trl_{i} y_{i}) + s_{i}(u_{i} \ast_{i} v_{i})),
\end{eqnarray*}

\begin{eqnarray*}
&&\big(x_{i}\cdot y_{i} + x_{i}\cdot^{\prime} r_{i}(v_{i})+ r_{i}(u_{i})\cdot^{\prime} y_{i}+r_{i}(u_{i})\cdot^{\prime} r_{i}(v_{i})
+ x_{i}\ppl_{i}^{\prime} s_{i}(v_{i})+r_{i}(u_{i}) \ppl_{i}^{\prime} s_{i}(v_{i})
\\
&&+ s_{i}(u_{i})\ppr_{i}^{\prime} y_{i}+s_{i}(u_{i})\ppr_{i}^{\prime}r_{i}(v_{i})+ \omega_{i}^{\prime}(s_{i}(u_{i}), s_{i}(v_{i})),\ x_{i}\trr_{i}^{\prime} s_{i}(v_{i})+r_{i}(u_{i}) \trr_{i}^{\prime} s_{i}(v_{i})\\
&&+ s_{i}(u_{i})\trl_{i}^{\prime} y_{i}+s_{i}(u_{i})\trl_{i}^{\prime}r_{i}(v_{i})
+ s_{i}(u_{i}) \ast_{i}^{\prime} s_{i}(v_{i}) \big),
\end{eqnarray*}
Equation $(1)$ holds if and only if conditions (H1)--(H6) hold.

\begin{eqnarray*}
&&(\varphi+\sigma^{\prime}+d) \phi_{1}(x_{1}, u_{1})-\phi_{0}(\varphi(x_{1})+\sigma(u_{1}), d(u_{1})) \\
&=&(\varphi+\sigma^{\prime}+d)(x_{1}+r_{1}(u_{1}), s_{1}(u_{1}))-(\varphi(x_{1})+\sigma(u_{1})+r_{0}(d(u_{1})), s_{0}(d(u_{1})))\\
&=&(\varphi(r_{1}(u_{1}))+\sigma^{\prime}(x_{1})+\sigma^{\prime}r_{1}(u_{1})
-\sigma(u_{1})-r_{0}d(u_{1}),ds_{1}(u_{1})-s_{0}d(u_{1})),
\end{eqnarray*}
Equation (2) holds if and only if (H7)--(H8) hold.

Now consider Equation (3),
\begin{eqnarray*}
&&\phi_{1}((x_{0}, u_{0})\trr_{}(x_{1}, u_{1})) -\phi_{0}(x_{0},u_{0})\circ^{\prime}_{2} \phi_{1}(x_{1},u_{1})\\
&=&\phi_{1}\big( x_{0}\cdot x_{1} + x_{0} \ppl_{2} u_{1} + u_{0}\ppr_{2} x_{1} + \omega_{2}(u_{0}, u_{1}),\  x_{0} \trr_{2} u_{1} + u_{0}\trl_{2} x_{1} + u_{0} \ast_{2} u_{1} \big)\\
&&-(x_{0}+r_{0}(u_{0}), s_{0}(u_{0}))\circ^{\prime}_{2}(x_{1}+r_{1}(u_{1}), s_{1}(u_{1}))\\
 &=&(x_{0}\cdot x_{1} + x_{0} \ppl_{2} u_{1} + u_{0}\ppr_{2} x_{1} + \omega_{2}(u_{0}, u_{1})+r_{1}(x_{0} \trr_{2} u_{1}) + r_{1}(u_{0}\trl_{2} x_{1})\\
 &&+ r_{1}(u_{0} \ast_{2} u_{1}),s_{1}(x_{0} \trr_{2} u_{1}) + s_{1}(u_{0}\trl_{2} x_{1}) + s_{1}(u_{0} \ast_{2} u_{1}))\\
&& -\big(x_{0}\cdot^{\prime}x_{1}+ x_{0}\cdot^{\prime}r_{1}(u_{1})+r_{0}(u_{0})\cdot^{\prime} x_{1}+r_{0}(u_{0})\cdot^{\prime} r_{1}(u_{1})
 + x_{0}\ppl^{\prime}_{2} s_{1}(u_{1}) \\
&&+r_{0}(u_{0}) \ppl^{\prime}_{2} s_{1}(u_{1}) + s_{0}(u_{0})\ppr^{\prime}_{2} x_{1}+s_{0}(u_{0})\ppr^{\prime}_{2}r_{1}(u_{1})
 + \omega^{\prime}_{2}(s_{0}(u_{0}), s_{1}(u_{1})),\ \\
&& x_{0}\trr^{\prime}_{2} s_{1}(u_{1})+r_{0}(u_{0}) \trr^{\prime}_{2} s_{1}(u_{1})
 + s_{0}(u_{0})\trl^{\prime}_{2} x_{1}+s_{0}(u_{0})\trl^{\prime}_{2}r_{1}(u_{1})
 + s_{0}(u_{0}) \ast^{\prime}_{2} s_{1}(u_{1}) \big),
\end{eqnarray*}
Equation (3) holds if and only if (H9)--(H14) hold.

 \begin{eqnarray*}
&&\phi_{1}((x_{1}, u_{1})\trl_{}(x_{0}, u_{0})) - \phi_{1}(x_{1},u_{1})\circ^{\prime}_{3} \phi_{0}(x_{0},u_{0})\\
&=&\phi_{1}\big( x_{1}\cdot x_{0} + x_{1} \ppl_{3} u_{0} + u_{1}\ppr_{3} x_{0} + \omega_{3}(u_{1}, u_{0}),\  x_{1} \trr_{3} u_{0} + u_{1}\trl_{3} x_{0} + u_{1} \ast_{3} u_{0} \big)\\
&&-(x_{1}+r_{1}(u_{1}), s_{1}(u_{1})) \circ^{\prime}_{3}(x_{0}+r_{0}(u_{0}), s_{0}(u_{0}))\\
&=&( x_{1}\cdot x_{0} + x_{1} \ppl_{3} u_{0} + u_{1}\ppr_{3} x_{0} + \omega_{3}(u_{1}, u_{0})+r_{1}(x_{1} \trr_{3} u_{0}) + r_{1}(u_{1}\trl_{3} x_{0})\\
 && + r_{1}(u_{1} \ast_{3} u_{0}),s_{1}(x_{1} \trr_{3} u_{0}) + s_{1}(u_{1}\trl_{3} x_{0}) + s_{1}(u_{1} \ast_{3} u_{0}))\\
 &&-\big(x_{1}\cdot^{\prime}x_{0}+ x_{1}\cdot^{\prime}r_{0}(u_{0})+r_{1}(u_{1})\cdot^{\prime} x_{0}+r_{1}(u_{1})\cdot^{\prime}r_{0}(u_{0})
 + x_{1}\ppl^{\prime}_{3} s_{0}(u_{0}) \\
 &&+r_{1}(u_{1}) \ppl^{\prime}_{3} s_{0}(u_{0})+ s_{1}(u_{1})\ppr^{\prime}_{3} x_{0}+s_{1}(u_{1})\ppr^{\prime}_{3}r_{0}(u_{0})
 + \omega^{\prime}_{3}(s_{1}(u_{1}), s_{0}(u_{0})),\ \\
 &&x_{1}\trr^{\prime}_{3} s_{0}(u_{0})+r_{1}(u_{1}) \trr^{\prime}_{3} s_{0}(u_{0})
 + s_{1}(u_{1})\trl^{\prime}_{3} x_{0}+s_{1}(u_{1})\trl^{\prime}_{3}r_{0}(u_{0})
 + s_{1}(u_{1}) \ast^{\prime}_{3} s_{0}(u_{0}) \big).
\end{eqnarray*}
Equation (4) holds if and only if (H15)--(H20) hold.

Assume that $s_{i}: V_{i} \rightarrow V_{i}$ is bijective. Then ${\phi}_{\left(r_{i}, s_{i} ; i=0,1\right)}$ is an isomorphism of Zinbiel 2-algebras with the inverse given by ${\phi}_{\left(r_{i}, s_{i} i=0,1\right)}^{-1}=\left({\phi}_{0}^{-1}, {\phi}_{1}^{-1}\right)$, where ${\phi}_{i}^{-1}\left(x_{i}, v_{i}\right)=\left(x_{i}-r_{i}\left(s^{-1}\left(v_{i}\right)\right), s^{-1}\left(v_{i}\right)\right)$ for $x_{i} \in Z_{i}$ and $v_{i} \in V_{i}$. Conversely, assume that ${\phi}_{\left(r_{i}, s_{i} i=0,1\right)}=\left({\phi}_{0}, {\phi}_{1}\right)$ is isomorphic. Then ${\phi}_{i}$ is an isomorphism of Zinbiel algebras for $i=0,1$. The proof is completed now.
\end{proof}

We denote by ${\mathcal Z} ({Z}, V)$ the set of extending structures $\Omega({Z}, V)$.
It is easy to see that  $\equiv$  and $\approx$ are equivalence relations on the set   ${\mathcal Z} ({Z}, V)$.
We obtain the following theorem, which provides an answer to the
extending structures problem of strict Zinbiel 2-algebras.

\begin{theorem}\thlabel{main1}
Let $(Z_{1},Z_{0}, \varphi)$ be a Zinbiel 2-algebra, $(E_{1},E_{0}, \varphi_E)$ a 2-vector space that
contains $(Z_{1},Z_{0}, \varphi)$ as a subspace and $(V_{1},V_{0}, d)$ a complement of
$(Z_{1},Z_{0}, \varphi)$ in $(E_{1},E_{0}, \varphi_E)$. Then we get:

$(1)$ Denote ${\mathcal
H}{\mathcal E}^{2}_{} \, (V, \, {Z} ) :=
{\mathcal Z}  ({Z}, V)/ \equiv $, then the
map
$$
{\mathcal H}{\mathcal E}^{2}_{} \, (V, \, {Z}) \to {\rm Extd} \, (E, {Z}), \qquad
\overline{(\leftharpoonup_{j}, \rightharpoonup_{j}, \triangleleft_{j}, \triangleright_{j}, \omega_{j}, *_{j}; j=0,1,2,3)} \mapsto {Z}\natural V
$$
is bijective, where $\overline{(\trl, \trr,
\ppl, \, \ppr, \omega,\, \ast)}$ is the
equivalence class of $(\trl, \trr,
\ppl, \, \ppr, \omega,\, \ast)$ via $\equiv$.

$(2)$ Denote ${\mathcal
H}{\mathcal C}^{2} \, (V, \, {Z} ) := {\mathcal Z} ({Z}, V)/ \approx $, then the map
$$
{\mathcal H}{\mathcal C}^{2} \, (V, \, {Z} ) \to {\rm Extd}' \, (E, {Z}), \qquad
\overline{\overline{(\leftharpoonup_{j}, \rightharpoonup_{j}, \triangleleft_{j}, \triangleright_{j}, \omega_{j}, *_{j}; j=0,1,2,3)}} \mapsto{Z} \natural V
$$
is bijective, where $\overline{\overline{(\trl,
\trr, \ppl, \, \ppr, \omega,\, \ast)}}$ is the equivalence class of $(\trl,
\trr, \ppl, \, \ppr,  \omega,\, \ast)$ via $\approx$.
\end{theorem}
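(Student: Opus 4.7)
The plan is to derive the theorem as a direct consequence of the realization theorem proved just above (every Zinbiel 2-algebra structure on $(E_1,E_0,\varphi_E)$ that contains $(Z_1,Z_0,\varphi)$ as a sub-Zinbiel 2-algebra is isomorphic, via a map that stabilizes $Z$ and co-stabilizes $V$, to a unified product $Z\natural V$) together with the morphism lemma classifying maps $Z\natural V\to Z\natural' V$ that stabilize $Z$.

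First I would make the two equivalence relations on ${\mathcal Z}(Z,V)$ explicit. Declare $\Omega(Z,V)\equiv\Omega'(Z,V)$ if there exists a pair of linear maps $(r_i,s_i)_{i=0,1}$ satisfying the compatibility conditions (H1)--(H20) of the morphism lemma with each $s_i:V_i\to V_i$ a bijection; declare $\Omega(Z,V)\approx\Omega'(Z,V)$ if, in addition, $s_i=\id_{V_i}$ for $i=0,1$. The last statement of the morphism lemma guarantees that these two relations are exactly the relations induced on ${\mathcal Z}(Z,V)$ by ``isomorphism of unified products stabilizing $Z$'' and ``isomorphism of unified products stabilizing $Z$ and co-stabilizing $V$'', respectively. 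In particular both $\equiv$ and $\approx$ are genuine equivalence relations, since isomorphisms compose and invert on the $(r_i,s_i)$-side via the explicit inverse $\phi^{-1}_i(x_i,v_i)=(x_i-r_i(s_i^{-1}(v_i)),\,s_i^{-1}(v_i))$ written down in the proof of the lemma.

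Next I would define the maps of the theorem by
$$\overline{\Omega(Z,V)}\;\longmapsto\;\text{isomorphism class of }Z\natural V,\qquad \overline{\overline{\Omega(Z,V)}}\;\longmapsto\;\text{class of }Z\natural V,$$
and check well-definedness and injectivity simultaneously: by the morphism lemma, $Z\natural V$ and $Z\natural' V$ are isomorphic through a map that stabilizes $(Z_1,Z_0,\varphi)$ if and only if some $(r_i,s_i)$ with $s_i$ bijective satisfies (H1)--(H20), which is precisely $\Omega\equiv\Omega'$; and the isomorphism additionally co-stabilizes $(V_1,V_0,d)$ if and only if $s_i=\id_{V_i}$, i.e.\ $\Omega\approx\Omega'$. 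Surjectivity is exactly the content of the realization theorem: any Zinbiel 2-algebra structure on $(E_1,E_0,\varphi_E)$ containing $(Z_1,Z_0,\varphi)$ as a subalgebra is isomorphic to some unified product $Z\natural V$ via an isomorphism that stabilizes $Z$ and co-stabilizes $V$, so its class in $\mathrm{Extd}(E,Z)$ (resp.\ $\mathrm{Extd}'(E,Z)$) is hit.

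The hard part is bookkeeping rather than genuine mathematics: one must be careful that the compatibility system (H1)--(H20) of the morphism lemma really translates the single assertion ``$(\phi_0,\phi_1)$ is a morphism of Zinbiel 2-algebras stabilizing $Z$'' under the direct-sum decomposition $E_i=Z_i\oplus V_i$, and that the specialization $s_i=\id$ corresponds exactly to co-stabilization of $V$. Both facts are already embedded in the morphism lemma via the explicit form $\phi_i(x_i,u_i)=(x_i+r_i(u_i),s_i(u_i))$, so the proof of the present theorem reduces to assembling these pieces: define the map, observe that the realization theorem gives surjectivity, and observe that the morphism lemma gives the precise ``iff'' characterization of the equivalence needed for well-definedness and injectivity. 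No further computation involving the lengthy list (Z1)--(Z120) is required.
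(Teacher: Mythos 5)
Your proposal is correct and follows exactly the route the paper intends: the paper states this theorem without a separate proof, presenting it as an immediate consequence of the realization theorem (surjectivity) and the morphism lemma with its characterization of isomorphisms via $(r_i,s_i)$ with $s_i$ bijective, resp.\ $s_i=\id_{V_i}$ (well-definedness and injectivity). Your explicit description of $\equiv$ and $\approx$ on ${\mathcal Z}(Z,V)$ through conditions (H1)--(H20) is the right way to make the implicit definitions precise, and no further use of (Z1)--(Z120) is needed.
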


\section{Nonabelian extension and matched pair of the Zinbiel 2-algebras}\selabel{cazurispeciale}

In this section, we show some special cases of unified products and extending structures.

\subsection{Crossed products and nonabelian extension problem}

Now we give a first special case of the unified product, namely the crossed product of Zinbiel 2-algebras
which is related to the study of the extension problem.

Let $(Z_1, Z_0,\varphi)$ and $(V_1, V_0,d)$ be two given Zinbiel 2-algebras. The extension problem asks for the classification of all
extensions of $(V_1, V_0,d)$ by $(Z_{1},Z_{0}, \varphi)$, i.e. all
Zinbiel 2-algebras in $(E_1, E_0,\varphi_E)$ that fit into an exact sequence
\begin{eqnarray} \label{diag03}
\xymatrix {0 \ar[r]^{}  & {Z_1} \ar[r]^{i_1} \ar[d]_{\varphi_Z} & {E_1}  \ar[r]^{\pi_1}\ar[d]^{\varphi_E}
& {V_1}\ar[d]_{}\ar[r]^{} \ar[d]^{\varphi_V} & 0 \\
0 \ar[r]^{}& {Z_0} \ar[r]^{i_0} & {E_0}\ar[r]^{\pi_0} & V_0  \ar[r]^{} & 0}
\end{eqnarray}
The classification is up to an isomorphism of Zinbiel 2-algebras
that stabilizes $(Z_1, Z_0,\varphi)$ and co-stabilizes $(V_1, V_0,d)$
and we denote by ${\mathcal E} {\mathcal P} ({V}, \,
{Z})$ the isomorphism classes of all extensions of
$(V_1, V_0,d)$ by $(Z_1, Z_0,\varphi)$ up to this equivalence relation.

\begin{definition}\label{cpLei}
A system $\Omega({Z}, V)=\left(\leftharpoonup_{j}, \rightharpoonup_{j}, \omega_{j}, \sigma ; j=0,1,2,3\right)$ consisting of the following bilinear maps
 \begin{eqnarray*}
\rightharpoonup_{0}: V_{0} \times  Z_{0} \rightarrow Z_{0}, \quad\leftharpoonup_{0}: Z_{0} \times  V_{0} \rightarrow Z_{0},
\quad\rightharpoonup_{1}: V_{1} \times  Z_{1} \rightarrow Z_{1}, \quad\leftharpoonup_{0}: Z_{1} \times  V_{1} \rightarrow Z_{1}, \\
\quad\rightharpoonup_{2}: V_{0} \times  Z_{1} \rightarrow Z_{1},\quad \leftharpoonup_{2}: Z_{0} \times  V_{1} \rightarrow Z_{1},
\quad\rightharpoonup_{3}: V_{1} \times  Z_{0} \rightarrow Z_{1},\quad \leftharpoonup_{3}: Z_{1} \times  V_{0} \rightarrow Z_{1},\\
\quad\omega_{0}: V_{0} \times  V_{0} \rightarrow Z_{0},\quad \omega_{2}: V_{0} \times  V_{1} \rightarrow  Z_{1},
\quad *_{0}: V_{0} \times  V_{0} \rightarrow V_{0},\quad *_{2}: V_{0} \times  V_{1} \rightarrow V_{1},\\
\quad \omega_{1}: V_{1} \times  V_{1} \rightarrow Z_{1},\quad \omega_{3}: V_{0} \times  V_{0} \rightarrow Z_{1},
\quad *_{1}: V_{1} \times  V_{1} \rightarrow V_{1},\quad *_{3}: V_{1} \times  V_{0} \rightarrow V_{1}
\end{eqnarray*}
and one linear map $\sigma: V_{1} \rightarrow Z_{0}$ will be called a crossed system of Zinbiel 2-algebras. The crossed product associated with the crossed system $\left(Z, V, \leftharpoonup_{j}, \rightharpoonup_{j}, \omega_{j}, \sigma ; j=0,1,2,3\right)$ is the Zinbiel 2-algebra  with the multiplications given by:
$$
\left(x_{i}, u_{i}\right) \circ_{i} \left(y_{i}, v_{i}\right)
 =\big( x_{i}\cdot y_{i} + x_{i} \ppl_{i} v_{i} + u_{i}\ppr_{i} y_{i} + \omega_{i}(u_{i}, v_{i}),\  u_{i} \ast_{i} v_{i} \big), i = 0,1,
$$
and
$$
\left(x_{0}, u_{0}\right) \trr_{} \left( x_{1}, u_{1}\right)
 = \big( x_{0}\cdot x_{1} + x_{0} \ppl_{2} u_{1} + u_{0}\ppr_{2} x_{1} + \omega_{2}(u_{0}, u_{1}),\ u_{0} \ast_{2} u_{1} \big),
$$
$$
\left(x_{1}, u_{1}\right) \trl_{} \left( x_{0}, u_{o}\right)
 = \big( x_{1}\cdot x_{0} + x_{1} \ppl_{3} u_{0} + u_{1}\ppr_{3} x_{0} + \omega_{3}(u_{1}, u_{0}),\ u_{1} \ast_{3} u_{0} \big),
$$
for all $x_{i}, y_{i} \in Z_{i}.$ Then $Z \cong Z \times\{0\}$ is an ideal of the Zinbiel 2-algebra $Z\#_{ \leftharpoonup,\rightharpoonup}^{\omega} V$ since $\left(x_{i}, 0\right)\circ_{i}\left(y_{i}, u_{i}\right):=(x_{i}\cdot y_{i} + x_{i} \ppl_{i} u_{i},\  0), (x_{0}, 0)\trr_{}\left(x_{1}, u_{1}\right):=( x_{0}\cdot x_{1} + x_{0} \ppl_{2} u_{1},\ 0 )$, and $\left(x_{0}, u_{0}\right)\trl_{}\left(x_{1}, 0\right):= ( x_{1}\cdot x_{0} + x_{1}\ppl_{3}u_{0},\ 0 \big).$ This type of Zinbiel 2-algebra $(Z_1, Z_0,\varphi)$ is called the cross product of $(Z_1, Z_0,\varphi)$ and $(V_1, V_0,d)$. We will denote it by $Z \#_{}^{\omega} V$.
\end{definition}

\begin{theorem}
The crossed system $\Omega({Z}, V)=\left(\leftharpoonup_{j}, \rightharpoonup_{j}, \omega_{j}, \sigma ; j=0,1,2,3\right)$ is a Zinbiel 2-algebra if and only if $\left(V, d, *_{j} ; j=0,1,2,3\right)$ is a strict Zinbiel 2-algebra and the following compatibilities hold for $i=0,1$ and any $x_{i}, y_{i} \in Z_{i}, u_{i}, v_{i} \in V_{i}$ :
\begin{enumerate}
\item[(CZ1)]
$(x_{i}\ppl_{i} v_{i})\cdot y_{i}=x_{i}\cdot (v_{i} \ppr_{i} y_{i}+y_{i} \ppl_{i} v_{i}),$
\item[(CZ2)] $(u_{i} \ppr_{i} x_{i})\cdot y_{i}= u_{i}\ppr_{i}( x_{i} \cdot  y_{i} + y_{i}\cdot  x_{i} ),$
\item[(CZ3)]
$\omega_{i}(u_{i}, v_{i})\cdot x_{i}+ (u_{i} \ast_{i} v_{i} )\ppr_{i} x_{i}
= u_{i}\ppr_{i}( v_{i}\ppr_{i} x_{i} + x_{i} \ppl_{i} v_{i}),$
\item[(CZ4)]$( x_{i}\cdot y_{i} )\ppl_{i} w_{i}  = x_{i}\cdot(y_{i} \ppl_{i} w_{i}+w_{i}\ppr_{i} y_{i}),$
\item[(CZ5)]
$( x_{i} \ppl_{i} v_{i})\ppl_{i} w_{i}
=x_{i}\cdot\big(\omega_{i}(v_{i}, w_{i})+\omega_{i}(w_{i},v_{i})\big)+x_{i}\ppl_{i}(v_{i} \ast_{i} w_{i} +w_{i} \ast_{i} v_{i} ),$
\item[(CZ6)]
$( u_{i}\ppr_{i} x_{i})\ppl_{i} w_{i}
= u_{i}\ppr_{i}( x_{i} \ppl_{i} w_{i}+w_{i}\ppr_{i} x_{i} ),$
\item[(CZ7)]
$\omega_{i}(u_{i}, v_{i})\ppl_{i} w_{i}+\omega_{i}( u_{i} \ast_{i} v_{i} ,w_{i} )
=u_{i}\ppr_{i}\big(\omega_{i}(v_{i}, w_{i})+\omega_{i}(w_{i},v_{i})\big)+ \omega_{i}(u_{i},v_{i} \ast_{i} w_{i}+ w_{i} \ast_{i} v_{i} ),$
\item[(CZ8)]
$(x_{0}\cdot x_{1}) \ppl_{1} u_{1}=x_{0}\cdot (x_{1} \ppl_{1}u_{1}+u_{1}\ppr_{1} x_{1}),$
\item[(CZ9)]
$(x_{0} \ppl_{2} u_{1})\cdot x_{1} =x_{0}\cdot (u_{1}\ppr_{1}x_{1}+x_{1} \ppl_{1}u_{1}),$
\item[(CZ10)]
$(x_{0} \ppl_{2} u_{1}) \ppl_{1} v_{1}=x_{0}\cdot (\omega_{1}(u_{1}, v_{1})+\omega_{1}(v_{1}, u_{1}))+x_{0} \ppl_{2} (u_{1} \ast_{1} v_{1}+v_{1} \ast_{1} u_{1}),$
\item[(CZ11)] $(u_{0}\ppr_{2} x_{1})\cdot y_{1}=u_{0}\ppr_{2} ( x_{1}\cdot y_{1}+y_{1}\cdot x_{1}),$
\item[(CZ12)]
$(u_{0}\ppr_{2} x_{1}) \ppl_{1}u_{1}=u_{0}\ppr_{2} (x_{1} \ppl_{1}u_{1}+u_{1}\ppr_{1} x_{1}),$
\item[(CZ13)]
$\omega_{2}(u_{0}, u_{1})\cdot x_{1} + (u_{0} \ast_{2} u_{1})\ppr_{1} x_{1}=u_{0}\ppr_{2} (u_{1}\ppr_{1} x_{1}+x_{1} \ppl_{1} u_{1}),$
\item[(CZ14)]
$\omega_{2}(u_{0}, u_{1}) \ppl_{1} v_{1} + \omega_{1}(u_{0} \ast_{2} u_{1}, v_{1})=u_{0}\ppr_{2} (\omega_{1}(u_{1}, v_{1})+\omega_{1}(v_{1}, u_{1}))+ \omega_{2}(u_{0},u_{1} \ast_{1} v_{1}+ v_{1} \ast_{1} u_{1}),$
\item[(CZ15)]
$(x_{0} \ppl_{0} u_{0})\cdot x_{1}=x_{0}\cdot (u_{0}\ppr_{2} x_{1}+x_{1} \ppl_{3} u_{0}),$
\item[(CZ16)]
$(u_{0}\ppr_{0} x_{0})\cdot x_{1}=u_{0}\ppr_{2} (x_{0}\cdot x_{1}+x_{1}\cdot x_{0}),$
\item[(CZ17)]
$\omega_{0}(u_{0}, v_{0})\cdot x_{1}+ (u_{0} \ast_{0} v_{0})\ppr_{2} x_{1}= u_{0}\ppr_{2} (v_{0}\ppr_{2} x_{1}+x_{1} \ppl_{3} v_{0}),$
\item[(CZ18)]
$(x_{0}\cdot y_{0}) \ppl_{2} u_{1}=x_{0}\cdot (y_{0} \ppl_{2} u_{1}+u_{1}\ppr_{3} y_{0}),$
\item[(CZ19)]
$(x_{0} \ppl_{0} u_{0}) \ppl_{2} u_{1}=x_{0}\cdot (\omega_{2}(u_{0}, u_{1})+ \omega_{3}(u_{1}, u_{0})) + x_{0} \ppl_{2} (u_{0} \ast_{2} u_{1}+u_{1} \ast_{3} u_{0}),$
\item[(CZ20)]
$(u_{0}\ppr_{0} x_{0}) \ppl_{2} u_{1}= u_{0}\ppr_{2} (x_{0} \ppl_{2} u_{1}+u_{1}\ppr_{3} x_{0}),$
\item[(CZ21)]
$\omega_{0}(u_{0}, v_{0}) \ppl_{2} u_{1}+ \omega_{2}(u_{0} \ast_{0} v_{0}, u_{1})= u_{0}\ppr_{2} (\omega_{2}(v_{0}, u_{1}) + \omega_{3}(u_{1}, v_{0}))+ \omega_{2}(u_{0},v_{0} \ast_{2} u_{1}+u_{1} \ast_{3} v_{0}),$
\item[(CZ22)]
$(x_{1} \ppl_{3} u_{0})\cdot x_{0}=x_{1}\cdot (u_{0}\ppr_{0} x_{0}+x_{0} \ppl_{0} u_{0}) ,$
\item[(CZ23)]
$(u_{1}\ppr_{3} x_{0})\cdot y_{0} =u_{1}\ppr_{3} (x_{0}\cdot y_{0}+y_{0}\cdot x_{0}),$
\item[(CZ24)]
$\omega_{3}(u_{1}, u_{0})\cdot x_{0} + (u_{1} \ast_{3} u_{0})\ppr_{3} x_{0}=u_{1}\ppr_{3} (u_{0}\ppr_{0} x_{0}+x_{0} \ppl_{0} u_{0}),$
\item[(CZ25)]
$(x_{1}\cdot x_{0}) \ppl_{3} u_{0}=x_{1}\cdot (x_{0} \ppl_{0} u_{0}+u_{0}\ppr_{0} x_{0}),$
\item[(CZ26)]
$(x_{1} \ppl_{3} u_{0}) \ppl_{3} v_{0}=x_{1}\cdot (\omega_{0}(u_{0}, v_{0})+\omega_{0}(v_{0}, u_{0}))+ x_{1} \ppl_{3} (u_{0} \ast_{0} v_{0}+v_{0} \ast_{0} u_{0}),$
\item[(CZ27)]
$(u_{1}\ppr_{3} x_{0}) \ppl_{3} u_{0} =u_{1}\ppr_{3} (x_{0} \ppl_{0} u_{0}+u_{0}\ppr_{0} x_{0}),$
\item[(CZ28)]
$\omega_{3}(u_{1}, u_{0}) \ppl_{3} v_{0} + \omega_{3}(u_{1} \ast_{3} u_{0}, v_{0})=u_{1}\ppr_{3} (\omega_{0}(u_{0}, v_{0})+ \omega_{0}(v_{0}, u_{0})) + \omega_{3}(u_{1},u_{0} \ast_{0} v_{0}+v_{0} \ast_{0} u_{0}),$
\item[(CZ29)]
$(x_{0} \ppl_{2} u_{1})\cdot y_{0}=x_{0}\cdot (u_{1}\ppr_{3} y_{0}+y_{0} \ppl_{2} u_{1}),$
\item[(CZ30)]
$(u_{0}\ppr_{2} x_{1})\cdot y_{0}= u_{0}\ppr_{2} (x_{1}\cdot y_{0}+y_{0}\cdot x_{1}),$
\item[(CZ31)]
$\omega_{2}(u_{0}, u_{1})\cdot x_{0}+ (u_{0} \ast_{2} u_{1})\ppr_{3} x_{0}= u_{0}\ppr_{2} (u_{1}\ppr_{3} x_{0}+x_{0} \ppl_{2} u_{1}),$
\item[(CZ32)]
$(x_{0}\cdot x_{1}) \ppl_{3} u_{0}=x_{0}\cdot (x_{1} \ppl_{3} u_{0}+u_{0}\ppr_{2} x_{1}),$
\item[(CZ33)]
$(x_{0} \ppl_{2} u_{1}) \ppl_{3} v_{0}=x_{0}\cdot (\omega_{3}(u_{1}, v_{0})+ \omega_{2}(v_{0}, u_{1}))+ x_{0} \ppl_{2} (u_{1} \ast_{3} v_{0}+v_{0} \ast_{2} u_{1}),$
\item[(CZ34)]
$(u_{0}\ppr_{2} x_{1}) \ppl_{3} v_{0}= u_{0}\ppr_{2} (x_{1} \ppl_{3} v_{0}+v_{0}\ppr_{2} x_{1}) ,$
\item[(CZ35)]
$\omega_{2}(u_{0}, u_{1}) \ppl_{3} v_{0}+ \omega_{3}(u_{0} \ast_{2} u_{1}, v_{0})=u_{0}\ppr_{2} (\omega_{3}(u_{1}, v_{0})+\omega_{2}(v_{0}, u_{1}))+ \omega_{2}(u_{0}, u_{1} \ast_{3} v_{0}+ v_{0} \ast_{2} u_{1}),$
\item[(CZ36)]
$(x_{1} \ppl_{3} u_{0})\cdot y_{1}=x_{1}\cdot (u_{0}\ppr_{2} y_{1}+y_{1} \ppl_{3} u_{0}),$
\item[(CZ37)]
$(u_{1}\ppr_{3} x_{0})\cdot x_{1}=u_{1}\ppr_{1} (x_{0}\cdot x_{1}+x_{1}\cdot x_{0}),$
\item[(CZ38)]
$\omega_{3}(u_{1}, u_{0})\cdot x_{1}+ (u_{1} \ast_{3} u_{0})\ppr_{1} x_{1}=u_{1}\ppr_{1} (u_{0}\ppr_{2} x_{1}+x_{1} \ppl_{3} u_{0}),$
\item[(CZ39)]
$(x_{1}\cdot x_{0}) \ppl_{1} u_{1}=x_{1}\cdot (x_{0} \ppl_{2} u_{1}+u_{1}\ppr_{3} x_{0}),$
\item[(CZ40)]
$(x_{1} \ppl_{3} u_{0}) \ppl_{1} u_{1}=x_{1}\cdot (\omega_{2}(u_{0}, u_{1})+ \omega_{3}(u_{1}, u_{0})) + x_{1} \ppl_{1} (u_{0} \ast_{2} u_{1}+u_{1} \ast_{3} u_{0}) ,$
\item[(CZ41)]
$(u_{1}\ppr_{3} x_{0}) \ppl_{1} v_{1}=u_{1}\ppr_{1} (x_{0} \ppl_{2} v_{1}+v_{1}\ppr_{3} x_{0}) ,$
\item[(CZ42)]
$\omega_{3}(u_{1}, u_{0}) \ppl_{1} v_{1}+ \omega_{1}(u_{1} \ast_{3} u_{0}, v_{1})=u_{1}\ppr_{1} (\omega_{2}(u_{0}, v_{1}) + \omega_{3}(v_{1}, u_{0})) + \omega_{1}(u_{1}, u_{0} \ast_{2} v_{1}+v_{1} \ast_{3} u_{0}),$
\item[(CZ43)]
$(x_{1} \ppl_{1} u_{1})\cdot x_{0}=x_{1}\cdot (u_{1}\ppr_{3} x_{0}+x_{0} \ppl_{2} u_{1}),$
\item[(CZ44)]
$(u_{1}\ppr_{1} x_{1})\cdot x_{0} = u_{1}\ppr_{1} (x_{1}\cdot x_{0}+x_{0}\cdot x_{1}),$
\item[(CZ45)]
$\omega_{1}(u_{1}, v_{1})\cdot x_{0} + (u_{1} \ast_{1} v_{1})\ppr_{3} x_{0}=u_{1}\ppr_{1} (v_{1}\ppr_{3} x_{0}+x_{0} \ppl_{2} v_{1}),$
\item[(CZ46)]
$(x_{1}\cdot y_{1}) \ppl_{3} u_{0}=x_{1}\cdot (y_{1} \ppl_{3} u_{0}+u_{0}\ppr_{2} y_{1}),$
\item[(CZ47)]
$(x_{1} \ppl_{1} u_{1}) \ppl_{3} u_{0}=x_{1}\cdot (\omega_{3}(u_{1}, u_{0})+\omega_{2}(u_{0}, u_{1}))+ x_{1} \ppl_{1} (u_{1} \ast_{3} u_{0}+u_{0} \ast_{2} u_{1}),$
\item[(CZ48)]
$(u_{1}\ppr_{1} x_{1}) \ppl_{3} u_{0} = u_{1}\ppr_{1} (x_{1} \ppl_{3} u_{0}+u_{0}\ppr_{2} x_{1}),$
\item[(CZ49)]
$\omega_{1}(u_{1}, v_{1}) \ppl_{3} u_{0}+ \omega_{3}(u_{1} \ast_{1} v_{1}, u_{0})=u_{1}\ppr_{1} (\omega_{3}(v_{1}, u_{0})+\omega_{2}(u_{0}, v_{1})) + \omega_{1}(u_{1},v_{1} \ast_{3} u_{0}+ u_{0} \ast_{2} v_{1}),$
\item[(CZ50)]
$\varphi(x_{0} \ppl_{2} u_{1})=x_{0}\cdot \sigma(u_{1}) + x_{0} \ppl_{0} d(u_{1}),$
\item[(CZ51)]
$\varphi(u_{0}\ppr_{2} x_{1})=u_{0}\ppr_{0} \varphi(x_{1}),$
\item[(CZ52)]
$\varphi\omega_{2}(u_{0}, u_{1}) + \sigma(u_{0} \ast_{2} u_{1})=u_{0}\ppr_{0} \sigma(u_{1}) + \omega_{0}(u_{0}, d(u_{1})),$
\item[(CZ53)]$\varphi(x_{1} \ppl_{3} u_{0})=\varphi(x_{1})\ppl_{0} u_{0},$
\item[(CZ54)]$\varphi(u_{1}\ppr_{3} x_{0})=\sigma(u_{1})\cdot x_{0}+ d(u_{1})\ppr_{0} x_{0},$
\item[(CZ55)]$\varphi\omega_{3}(u_{1}, u_{0}) + \sigma(u_{1} \ast_{3} u_{0})=\sigma(u_{1}) \ppl_{0} u_{0}+ \omega_{0}(d(u_{1}), u_{0}),$
\item[(CZ56)]$\sigma(u_{1})\cdot x_{1}+ d(u_{1})\ppr_{2} x_{1}=u_{1}\ppr_{1} x_{1},$
\item[(CZ57)]$\varphi(x_{1})\ppl_{2} u_{1}=x_{1} \ppl_{1} u_{1},$
\item[(CZ58)]$\sigma(u_{1}) \ppl_{2} v_{1} + \omega_{2}(d(u_{1}), v_{1})=\omega_{1}(u_{1}, v_{1}),$
\item[(CZ59)]$x_{1}\cdot \sigma(u_{1})+x_{1} \ppl_{3} d(u_{1})=x_{1} \ppl_{1} u_{1},$
\item[(CZ60)]$u_{1}\ppr_{3} \varphi(x_{1})=u_{1}\ppr_{1} x_{1},$
\item[(CZ61)]$u_{1}\ppr_{3}\sigma(v_{1}) + \omega_{3}(u_{1}, d(v_{1}))=\omega_{1}(u_{1}, v_{1}).$
\end{enumerate}
\end{theorem}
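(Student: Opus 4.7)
The plan is to recognize the crossed product of Definition \ref{cpLei} as the special case of the unified product ${Z}\natural V$ obtained by setting the ``action-type'' maps between $Z$ and $V$ identically equal to zero, namely $x_i\trr_i v_i = u_i\trl_i y_i = 0$ for $i=0,1$, together with $x_0\trr_2 u_1 = u_0\trl_2 x_1 = 0$ and $x_1\trr_3 u_0 = u_1\trl_3 x_0 = 0$. Under these substitutions the multiplication formulas of Definition \ref{def:01} collapse exactly to those of Definition \ref{cpLei}, so the statement will reduce to checking what Theorem \ref{thm:unifyprod} says in this restricted situation.

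First I would make the identification above explicit and confirm that every formula in Definition \ref{cpLei} (the two $\circ_i$ and the two $\trr,\trl$ between $E_0$ and $E_1$) arises from the corresponding formula of Definition \ref{def:01} by dropping all $\trl_\bullet,\trr_\bullet$ contributions; the map $\varphi_E$ is unchanged. This allows direct application of the ``if and only if'' in Theorem \ref{thm:unifyprod}.

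Next I would sort the 120 axioms (Z1)--(Z120) into three groups. Group A consists of the axioms that, after the substitution, involve only the operations $\ast_j$ on $V$ and the map $d$; these together are equivalent to $(V_1,V_0,d,\ast_j; j=0,1,2,3)$ being a strict Zinbiel 2-algebra. For instance, (Z12) reduces to the Zinbiel identity for $\ast_i$, and the appropriate subsets of (Z20)--(Z26), (Z48)--(Z54), (Z76)--(Z82), (Z90)--(Z96), together with (Z100)--(Z102), (Z106)--(Z108), (Z112)--(Z114), (Z118)--(Z120), give precisely the bimodule, action and crossed-module axioms for $V$. Group B consists of axioms that become tautological $0=0$ once all the vanishing substitutions are made (this accounts for most of (Z1), parts of (Z5), (Z10), (Z22), (Z36), (Z40), (Z63)--(Z68) and so on). Group C consists of the genuinely surviving compatibilities, which match (CZ1)--(CZ61) one by one: for example (Z2) becomes (CZ1), (Z4) becomes (CZ3), (Z15) becomes (CZ10), the relevant $\varphi_E$-conditions (Z97)--(Z120) trim down to (CZ50)--(CZ61), etc.

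The main obstacle is purely bookkeeping: setting up a bijection between the surviving instances of (Z1)--(Z120) and the list (CZ1)--(CZ61), and verifying for each axiom that exactly one of the three alternatives above occurs. This is mechanical once a table is drawn up, but it must be done with care because some (Z)-identities split into two independent pieces (the $Z$-component and the $V$-component) which land in different groups. After the table is compiled, the ``if and only if'' of the statement is a direct corollary of Theorem \ref{thm:unifyprod}.
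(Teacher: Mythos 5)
Your proposal is correct and is exactly the argument the paper intends: the crossed product is the unified product with all of $\trr_{j},\trl_{j}$ ($j=0,1,2,3$) set to zero, and the paper (which states this theorem without an explicit proof) clearly relies on precisely this specialization of Theorem~\ref{thm:unifyprod}. Your sample reductions ((Z2)$\to$(CZ1), (Z4)$\to$(CZ3), (Z15)$\to$(CZ10), (Z12)$\to$ the Zinbiel identity for $\ast_{i}$, and (Z97)--(Z120) trimming to (CZ50)--(CZ61) with the crossed-module axioms for $(V_{1},V_{0},d,\ast_{j})$ coming from (Z102), (Z108), (Z113), (Z120) and their kin) all check out, so the remaining work is indeed only the bookkeeping you describe.
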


\begin{corollary}
Let  $(Z_1, Z_0,\varphi_Z) $ be a Zinbiel 2-algebra and $(E_1, E_0,\varphi_E) $  2-vector space.
Then any Zinbiel 2-algebra structure on  $(E_1, E_0,\varphi_E) $ such that $(E_1, E_0,\varphi_E) $  contains $(Z_1, Z_0,\varphi_Z) $ as an ideal is isomorphic to a crossed product $Z\#_{}^{\omega} V$.
\end{corollary}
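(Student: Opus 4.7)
The plan is to invoke the reconstruction theorem for unified products proved earlier and then use the stronger hypothesis that $(Z_1,Z_0,\varphi_Z)$ is an \emph{ideal} of $(E_1,E_0,\varphi_E)$, rather than merely a subalgebra, to force the ``action'' components $\triangleright_j$ and $\triangleleft_j$ for $j=0,1,2,3$ of the recovered extending datum to vanish. First I would choose linear retractions $p_i:E_i\to Z_i$ with $p_i|_{Z_i}=\mathrm{id}_{Z_i}$, set $V_i:=\ker p_i$, and construct the extending datum $(\leftharpoonup_j,\rightharpoonup_j,\triangleleft_j,\triangleright_j,\omega_j,\ast_j,\sigma)$ precisely by the formulas supplied in the proof of the reconstruction theorem. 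This already yields an isomorphism of Zinbiel 2-algebras $(E_1,E_0,\varphi_E)\cong Z\natural V$ that stabilizes $(Z_1,Z_0,\varphi)$ and co-stabilizes $(V_1,V_0,d)$.

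Next I would translate the ideal condition into the vanishing of the eight ``cross'' operations. Because $Z_i$ is an ideal of $E_i$ as a Zinbiel algebra, for each $x_i\in Z_i$ and $u_i\in V_i$ both $x_i\circ_i u_i$ and $u_i\circ_i x_i$ lie in $Z_i$, hence are fixed by $p_i$. The defining formulas
\[
x_i\triangleright_i u_i=x_i\circ_i u_i-p_i(x_i\circ_i u_i),\qquad u_i\triangleleft_i x_i=u_i\circ_i x_i-p_i(u_i\circ_i x_i)
\]
then give $\triangleright_i=0$ and $\triangleleft_i=0$ for $i=0,1$. Similarly, since $Z$ is an ideal, the mixed action maps $\trr:E_0\times E_1\to E_1$ and $\trl:E_1\times E_0\to E_1$ send $(x_0,u_1),(u_0,x_1),(x_1,u_0),(u_1,x_0)$ into $Z_1$; exactly the same computation with $p_1$ forces $\triangleright_2=\triangleleft_2=0$ and $\triangleright_3=\triangleleft_3=0$.

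Finally, with all of $\triangleright_j$ and $\triangleleft_j$ equal to zero, the unified product multiplications $\circ_i,\trr,\trl$ of Definition~\ref{def:01} reduce term by term to the crossed product multiplications of Definition~\ref{cpLei}: in each output the $V$-component collapses to the $\ast_j$-term alone and the $Z$-component keeps only the $\cdot,\leftharpoonup_j,\rightharpoonup_j,\omega_j,\sigma$ contributions. Consequently the reduced datum $(\leftharpoonup_j,\rightharpoonup_j,\omega_j,\ast_j,\sigma)$ is a crossed system, the compatibilities (Z1)--(Z120) specialize to (CZ1)--(CZ61), and $Z\natural V$ coincides with $Z\#^{\omega}V$, proving the claim. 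The only real obstacle is the bookkeeping: systematically checking that each of the eight mixed operations is killed on the correct pair of arguments by the ideal hypothesis, and verifying that no residual structure survives when passing from the unified product axioms to the crossed product ones; this is essentially mechanical once the vanishing step above is in place.
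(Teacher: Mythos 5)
Your proof is correct and is essentially the argument the paper intends (the corollary is stated without an explicit proof, but it is the standard consequence of the reconstruction theorem): the ideal hypothesis forces $p_i$ to fix all mixed products, so the $V$-valued components $\triangleright_j,\triangleleft_j$ of the recovered extending datum vanish and the unified product $Z\natural V$ collapses to the crossed product $Z\#^{\omega}V$. The only point worth making explicit is that ``ideal of a Zinbiel 2-algebra'' must be read as requiring not only that $Z_i$ be an ideal of $E_i$ but also that the action maps $\trr$ and $\trl$ send the mixed pairs into $Z_1$, which is exactly the property you use and which is consistent with the paper's description of $Z\times\{0\}$ inside $Z\#^{\omega}V$.
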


\subsection{Matched pair of Zinbiel 2-algebras and factorization problem}

The {factorization problem} is the following:
Let $(Z_1, Z_0,\varphi)$ and $(V_1, V_0,d)$ be two given Zinbiel 2-algebras. Describe and classify all Zinbiel 2-algebras $(E_1, E_0,\varphi_E)$ that contains $(Z_1, Z_0,\varphi)$ and $(V_1, V_0,d)$ as a subalgebras such that ${E}  = {Z} \oplus {V}$ as 2-vector spaces.

\begin{definition}\label{mpLei}
Let $(Z_{1}, Z_{0}, \varphi)$ and $(V_{1}, V_{0}, d)$  be two  Zinbiel 2-algebras. They form a matched pair if there exist
 bilinear maps:
 \begin{eqnarray*}
\rightharpoonup_{0}: V_{0} \times  Z_{0} \rightarrow Z_{0}, \quad\leftharpoonup_{0}: Z_{0} \times  V_{0} \rightarrow Z_{0}, \quad \triangleright_{0}: Z_{0} \times  V_{0} \rightarrow V_{0},
\quad \triangleleft_{0}: V_{0} \times  Z_{0} \rightarrow V_{0},\\
\rightharpoonup_{1}: V_{1} \times  Z_{1} \rightarrow Z_{1},\quad \leftharpoonup_{1}: Z_{1} \times  V_{1} \rightarrow Z_{1}, \quad\triangleright_{1}: Z_{1} \times  V_{1} \rightarrow V_{1},
\quad \triangleleft_{1}: V_{1} \times  Z_{1} \rightarrow V_{1}, \\
\rightharpoonup_{2}: V_{0} \times  Z_{1} \rightarrow Z_{1},\quad \leftharpoonup_{2}: Z_{0} \times  V_{1} \rightarrow Z_{1},\quad \triangleright_{2}: Z_{0} \times  V_{1} \rightarrow V_{1},
\quad \triangleleft_{2}: V_{0} \times  Z_{1} \rightarrow V_{1},\\
\rightharpoonup_{3}: V_{1} \times  Z_{0} \rightarrow Z_{1}, \quad\leftharpoonup_{3}: Z_{1} \times  V_{0} \rightarrow Z_{1},\quad\triangleright_{3}: Z_{1} \times  V_{0} \rightarrow V_{1},
\quad \triangleleft_{3}: V_{1} \times  Z_{0} \rightarrow V_{1},
\end{eqnarray*}
such that the direct sum space ${Z}\oplus {V}$ form a Zinbiel 2-algebra
under the following multiplication:
$$
\left(x_{i}, u_{i}\right) \circ_{i} \left(y_{i}, v_{i}\right)
 =\big( x_{i}\cdot y_{i} + x_{i} \ppl_{i} v_{i} + u_{i}\ppr_{i} y_{i},\  x_{i} \trr_{i} v_{i} + u_{i}\trl_{i} y_{i} + u_{i} \ast_{i} v_{i} \big),
$$
and
$$
\left(x_{0}, u_{0}\right) \trr_{} \left( x_{1}, u_{1}\right)
 = \big( x_{0}\cdot x_{1} + x_{0} \ppl_{2} u_{1} + u_{0}\ppr_{2} x_{1},\  x_{0} \trr_{2} u_{1} + u_{0}\trl_{2} x_{1} + u_{0} \ast_{2} u_{1} \big),
$$
$$
\left(x_{1}, u_{1}\right) \trl_{} \left( x_{0}, u_{o}\right)
 = \big( x_{1}\cdot x_{0} + x_{1} \ppl_{3} u_{0} + u_{1}\ppr_{3} x_{0},\  x_{1} \trr_{3} u_{0} + u_{1}\trl_{3} x_{0} + u_{1} \ast_{3} u_{0} \big),
$$
for $i = 0,1$ and all $x_{i}, y_{i} \in Z_{i},$ $u_{i}, v_{i} \in V_{i},$. This Zinbiel 2-algebra is
 called the \emph{bicrossed product}  of $(Z_1, Z_0,\varphi)$ and
$(V_1, V_0, d)$.  We will denoted it by  ${Z} \, \bowtie {V}$.
\end{definition}

\begin{theorem}
Let $\Omega(Z, V)=\left(\leftharpoonup_{j}, \rightharpoonup_{j}, \triangleleft_{j}, \triangleright_{j}, \omega_{j}, *_{j},\sigma ; j=0,1,2,3\right)$ be the extending datum of $(Z_{1}, Z_{0}, \varphi)$ by $(V_{1}, V_{0}, d)$  such that $\sigma$ and $\omega_{j}$ are trivial maps for $j=0,1,2,3.$ Then $\Omega(Z, V)=\left(\leftharpoonup_{j}, \rightharpoonup_{j}, \triangleleft_{j}, \triangleright_{j}, \omega_{j}, *_{j},\sigma ; j=0,1,2,3\right)=\left(\leftharpoonup_{j}, \rightharpoonup_{j}, \triangleleft_{j}, \triangleright_{j}, *_{j}; j=0,1,2,3\right)$ is a Zinbiel 2-extending structure of $(Z_{1}, Z_{0}, \varphi)$ by $(V_{1}, V_{0}, d)$  if and only if $\left(V, d, *_{j} ; j=0,1,2,3\right)$ is a strict Zinbiel 2-algebra, the following compatibilities hold for $i=0,1$ and any $x_{i}, y_{i} \in Z_{i}, u_{i}, v_{i} \in V_{i}$ :
\begin{enumerate}
\item[(BZ1)] $(V_{i}, \trl_{i}, \trr_{i})$ is an ${Z_{i}}$-bimodule:
 \begin{eqnarray*}
            &&( x_{i}\cdot y_{i} )\trr_{i} w_{i}=x_{i}\trr_{i}( y_{i}\trr_{i} w_{i}+w_{i}\trl_{i} y_{i}),\\
           &&(x_{i}\trr_{i} v_{i})\trl_{i} z_{i}=x_{i}\trr_{i}(v_{i}\trl_{i} z_{i}+ z_{i}\trr_{i} v_{i}),\\
            &&(u_{i}\trl_{i} y_{i})\trl_{i} z_{i}=u_{i}\trl_{i} (y_{i}\cdot z_{i}+ z_{i}\cdot y_{i}),
            \end{eqnarray*}
\item[(BZ2)]
$(x_{i}\ppl_{i} v_{i})\cdot y_{i}+(x_{i} \trr_{i} v_{i})\ppr_{i} y_{i}
=x_{i}\cdot (v_{i} \ppr_{i} y_{i}+y_{i} \ppl_{i} v_{i})+x_{i}\ppl_{i}( v_{i}\trl_{i} y_{i} + y_{i} \trr_{i} v_{i}),$
\item[(BZ3)] $(u_{i} \ppr_{i} x_{i})\cdot y_{i} +( u_{i}\trl_{i} x_{i} )\ppr_{i} y_{i} = u_{i}\ppr_{i}( x_{i} \cdot  y_{i} + y_{i}\cdot  x_{i} ),$
\item[(BZ4)]
$(u_{i} \ast_{i} v_{i} )\ppr_{i} x_{i}
= u_{i}\ppr_{i}( v_{i}\ppr_{i} x_{i} + x_{i} \ppl_{i} v_{i}),$
\item[(BZ5)] $(u_{i} \ast_{i} v_{i})\trl_{i} x_{i} = u_{i}\trl_{i}( v_{i}\ppr_{i} x_{i}+x_{i}\ppl_{i} v_{i})+u_{i} \ast_{i}( v_{i}\trl_{i} x_{i} + x_{i} \trr_{i} v_{i} ),$
\item[(BZ6)]$( x_{i}\cdot y_{i} )\ppl_{i} w_{i}  = x_{i}\cdot(y_{i} \ppl_{i} w_{i}+w_{i}\ppr_{i} y_{i})+x_{i}\ppl_{i}(y_{i} \trr_{i} w_{i}+w_{i}\trl_{i} y_{i}),$
\item[(BZ7)]
$( x_{i} \ppl_{i} v_{i})\ppl_{i} w_{i}
=x_{i}\ppl_{i}(v_{i} \ast_{i} w_{i} +w_{i} \ast_{i} v_{i} ),$
\item[(BZ8)] $(x_{i} \ppl_{i} v_{i}) \trr_{i} w_{i}+(x_{i}\trr_{i} v_{i})\ast_{i} w_{i} = x_{i}\trr_{i}( v_{i} \ast_{i} w_{i}+w_{i} \ast_{i} v_{i} ),$
\item[(BZ9)]
$( u_{i}\ppr_{i} x_{i})\ppl_{i} w_{i}
= u_{i}\ppr_{i}( x_{i} \ppl_{i} w_{i}+w_{i}\ppr_{i} x_{i} ),$
\item[(BZ10)]
$(u_{i}\ppr_{i} x_{i}) \trr_{i} w_{i}+ (u_{i}\trl_{i} x_{i})\ast_{i} w_{i}
= u_{i}\trl_{i}(x_{i} \ppl_{i} w_{i} + w_{i}\ppr_{i} x_{i} )+u_{i} \ast_{i}(x_{i} \trr_{i} w_{i} + w_{i}\trl_{i} x_{i}),$
\item[(BZ11)]
$(x_{0}\cdot x_{1}) \ppl_{1} u_{1}=x_{0}\cdot (x_{1} \ppl_{1}u_{1}+u_{1}\ppr_{1} x_{1})+ x_{0} \ppl_{2} (x_{1} \trr_{1} u_{1}+u_{1}\trl_{1} x_{1}),$
\item[(BZ12)]
$(x_{0} \ppl_{2} u_{1})\cdot x_{1} + ( x_{0} \trr_{2}u_{1})\ppr_{1}x_{1}=x_{0}\cdot (u_{1}\ppr_{1}x_{1}+x_{1} \ppl_{1}u_{1})+ x_{0}\ppl_{2}(u_{1}\trl_{1} x_{1}+x_{1} \trr_{1} u_{1}),$
\item[(BZ13)]
$(x_{0} \ppl_{2} u_{1}) \ppl_{1} v_{1} =x_{0} \ppl_{2} (u_{1} \ast_{1} v_{1}+v_{1} \ast_{1} u_{1}),$
\item[(BZ14)] $(u_{0}\ppr_{2} x_{1})\cdot y_{1} + (u_{0}\trl_{2} x_{1})\ppr_{1} y_{1}=u_{0}\ppr_{2} ( x_{1}\cdot y_{1}+y_{1}\cdot x_{1}),$
\item[(BZ15)]
$(u_{0}\ppr_{2} x_{1}) \ppl_{1}u_{1}=u_{0}\ppr_{2} (x_{1} \ppl_{1}u_{1}+u_{1}\ppr_{1} x_{1}),$
\item[(BZ16)]
$(u_{0} \ast_{2} u_{1})\ppr_{1} x_{1}=u_{0}\ppr_{2} (u_{1}\ppr_{1} x_{1}+x_{1} \ppl_{1} u_{1}),$
\item[(BZ17)] $(x_{0}\cdot x_{1}) \trr_{1} u_{1}=x_{0} \trr_{2} (x_{1} \trr_{1} u_{1}+u_{1}\trl_{1} x_{1}),$
\item[(BZ18)] $( x_{0} \trr_{2} u_{1})\trl_{1} y_{1} =x_{0} \trr_{2} (u_{1}\trl_{1} y_{1}+y_{1} \trr_{1} u_{1}),$
\item[(BZ19)] $(x_{0} \ppl_{2} u_{1}) \trr_{1} v_{1}+( x_{0} \trr_{2} u_{1}) \ast_{1} v_{1}=x_{0} \trr_{2} (u_{1} \ast_{1} v_{1}+v_{1} \ast_{1} u_{1}),$
\item[(BZ20)] $(u_{0}\trl_{2} x_{1})\trl_{1} y_{1}=u_{0}\trl_{2} (x_{1}\cdot y_{1}+y_{1}\cdot x_{1}),$
\item[(BZ21)]
$(u_{0}\ppr_{2} x_{1}) \trr_{1} u_{1} + (u_{0}\trl_{2} x_{1}) \ast_{1} u_{1}=u_{0}\trl_{2} (x_{1} \ppl_{1} u_{1}+u_{1}\ppr_{1} x_{1}) + u_{0} \ast_{2} (x_{1} \trr_{1} u_{1}+u_{1}\trl_{1} x_{1}),$
\item[(BZ22)]
$(u_{0} \ast_{2} u_{1})\trl_{1} x_{1}=u_{0}\trl_{2} (u_{1}\ppr_{1} x_{1}+x_{1} \ppl_{1} u_{1}) + u_{0} \ast_{2} (u_{1}\trl_{1} x_{1}+x_{1} \trr_{1} u_{1}),$
\item[(BZ23)]
$(x_{0} \ppl_{0} u_{0})\cdot x_{1}+ (x_{0} \trr_{0} u_{0})\ppr_{2} x_{1}=x_{0}\cdot (u_{0}\ppr_{2} x_{1}+x_{1} \ppl_{3} u_{0})+ x_{0} \ppl_{2} (u_{0}\trl_{2} x_{1}+x_{1} \trr_{3} u_{0}),$
\item[(BZ24)]
$(u_{0}\ppr_{0} x_{0})\cdot x_{1}+ (u_{0}\trl_{0} x_{0})\ppr_{2} x_{1}=u_{0}\ppr_{2} (x_{0}\cdot x_{1}+x_{1}\cdot x_{0}),$
\item[(BZ25)]
$(u_{0} \ast_{0} v_{0})\ppr_{2} x_{1}= u_{0}\ppr_{2} (v_{0}\ppr_{2} x_{1}+x_{1} \ppl_{3} v_{0}),$
\item[(BZ26)]
$(x_{0}\cdot y_{0}) \ppl_{2} u_{1}=x_{0}\cdot (y_{0} \ppl_{2} u_{1}+u_{1}\ppr_{3} y_{0})+x_{0} \ppl_{2} (y_{0} \trr_{2} u_{1}+u_{1}\trl_{3} y_{0}),$
\item[(BZ27)]
$(x_{0} \ppl_{0} u_{0}) \ppl_{2} u_{1}=x_{0} \ppl_{2} (u_{0} \ast_{2} u_{1}+u_{1} \ast_{3} u_{0}),$
\item[(BZ28)]
$(u_{0}\ppr_{0} x_{0}) \ppl_{2} u_{1}= u_{0}\ppr_{2} (x_{0} \ppl_{2} u_{1}+u_{1}\ppr_{3} x_{0}),$
\item[(BZ29)]
$(x_{0}\cdot y_{0}) \trr_{2} u_{1}=x_{0} \trr_{2} (y_{0} \trr_{2} u_{1}+u_{1}\trl_{3} y_{0}),$
\item[(BZ30)]
$(x_{0} \ppl_{0} u_{0}) \trr_{2} u_{1} + (x_{0} \trr_{0} u_{0}) \ast_{2} u_{1}=x_{0} \trr_{2} (u_{0} \ast_{2} u_{1}+u_{1} \ast_{3} u_{0}),$
\item[(BZ31)]
$(u_{0}\ppr_{0} x_{0}) \trr_{2} u_{1}+ (u_{0}\trl_{0} x_{0}) \ast_{2} u_{1}=u_{0}\trl_{2} (x_{0} \ppl_{2} u_{1}+u_{1}\ppr_{3} x_{0})+ u_{0} \ast_{2} (x_{0} \trr_{2} u_{1}+u_{1}\trl_{3} x_{0}),$
\item[(BZ32)]
$(x_{0} \trr_{0} u_{0})\trl_{2} x_{1}=x_{0} \trr_{2} (u_{0}\trl_{2} x_{1}+ x_{1} \trr_{3} u_{0}),$
\item[(BZ33)]
$(u_{0}\trl_{0} x_{0})\trl_{2} x_{1}=u_{0}\trl_{2} (x_{0}\cdot x_{1}+x_{1}\cdot x_{0}),$
\item[(BZ34)]
$(u_{0} \ast_{0} v_{0})\trl_{2} x_{1}=u_{0}\trl_{2} (v_{0}\ppr_{2} x_{1}+x_{1} \ppl_{3} v_{0})+ u_{0} \ast_{2} (v_{0}\trl_{2} x_{1}+x_{1} \trr_{3} v_{0}),$
\item[(BZ35)]
$(x_{1} \ppl_{3} u_{0})\cdot x_{0}+ (x_{1} \trr_{3} u_{0})\ppr_{3} x_{0}=x_{1}\cdot (u_{0}\ppr_{0} x_{0}+x_{0} \ppl_{0} u_{0}) + x_{1} \ppl_{3} (u_{0}\trl_{0} x_{0}+x_{0} \trr_{0} u_{0}),$
\item[(BZ36)]
$(u_{1}\ppr_{3} x_{0})\cdot y_{0} + (u_{1}\trl_{3} x_{0})\ppr_{3} y_{0}=u_{1}\ppr_{3} (x_{0}\cdot y_{0}+y_{0}\cdot x_{0}),$
\item[(BZ37)]
$(u_{1} \ast_{3} u_{0})\ppr_{3} x_{0}=u_{1}\ppr_{3} (u_{0}\ppr_{0} x_{0}+x_{0} \ppl_{0} u_{0}),$
\item[(BZ38)]
$(x_{1}\cdot x_{0}) \ppl_{3} u_{0}=x_{1}\cdot (x_{0} \ppl_{0} u_{0}+u_{0}\ppr_{0} x_{0}) + x_{1} \ppl_{3} (x_{0} \trr_{0} u_{0}+u_{0}\trl_{0} x_{0}),$
\item[(BZ39)]
$(x_{1} \ppl_{3} u_{0}) \ppl_{3} v_{0}=x_{1} \ppl_{3} (u_{0} \ast_{0} v_{0}+v_{0} \ast_{0} u_{0}),$
\item[(BZ40)]
$(u_{1}\ppr_{3} x_{0}) \ppl_{3} u_{0} =u_{1}\ppr_{3} (x_{0} \ppl_{0} u_{0}+u_{0}\ppr_{0} x_{0}),$
\item[(BZ41)]
$(x_{1}\cdot x_{0}) \trr_{3} u_{0}=x_{1} \trr_{3} (x_{0} \trr_{0} u_{0}+u_{0}\trl_{0} x_{0}),$
\item[(BZ42)]
$(x_{1} \ppl_{3} u_{0}) \trr_{3} v_{0}+ (x_{1} \trr_{3} u_{0}) \ast_{3} v_{0}=x_{1} \trr_{3} (u_{0} \ast_{0} v_{0}+v_{0} \ast_{0} u_{0}),$
\item[(BZ43)]
$(u_{1}\ppr_{3} x_{0}) \trr_{3} u_{0}+ (u_{1}\trl_{3} x_{0}) \ast_{3} u_{0}=u_{1}\trl_{3} (x_{0} \ppl_{0} u_{0}+u_{0}\ppr_{0} x_{0})+ u_{1} \ast_{3} (x_{0} \trr_{0} u_{0}+u_{0}\trl_{0} x_{0}),$
\item[(BZ44)]
$(x_{1} \trr_{3} u_{0})\trl_{3} x_{0}=x_{1} \trr_{3} (u_{0}\trl_{0} x_{0}+x_{0} \trr_{0} u_{0}),$
\item[(BZ45)]
$(u_{1}\trl_{3} x_{0})\trl_{3} y_{0}=u_{1}\trl_{3} (x_{0}\cdot y_{0}+y_{0}\cdot x_{0}),$
\item[(BZ46)]
$(u_{1} \ast_{3} u_{0})\trl_{3} x_{0}=u_{1}\trl_{3} (u_{0}\ppr_{0} x_{0}+x_{0} \ppl_{0} u_{0})+ u_{1} \ast_{3} (u_{0}\trl_{0} x_{0}+x_{0} \trr_{0} u_{0}),$
\item[(BZ47)]
$(x_{0} \ppl_{2} u_{1})\cdot y_{0}+ (x_{0} \trr_{2} u_{1})\ppr_{3} y_{0}=x_{0}\cdot (u_{1}\ppr_{3} y_{0}+y_{0} \ppl_{2} u_{1})+ x_{0} \ppl_{2} (u_{1}\trl_{3} y_{0}+y_{0} \trr_{2} u_{1}),$
\item[(BZ48)]
$(u_{0}\ppr_{2} x_{1})\cdot y_{0}+ (u_{0}\trl_{2} x_{1})\ppr_{3} y_{0}= u_{0}\ppr_{2} (x_{1}\cdot y_{0}+y_{0}\cdot x_{1}),$
\item[(BZ49)]
$(u_{0} \ast_{2} u_{1})\ppr_{3} x_{0}= u_{0}\ppr_{2} (u_{1}\ppr_{3} x_{0}+x_{0} \ppl_{2} u_{1}),$
\item[(BZ50)]
$(x_{0}\cdot x_{1}) \ppl_{3} u_{0}=x_{0}\cdot (x_{1} \ppl_{3} u_{0}+u_{0}\ppr_{2} x_{1})+ x_{0} \ppl_{2} (x_{1} \trr_{3} u_{0}+u_{0}\trl_{2} x_{1}),$
\item[(BZ51)]
$(x_{0} \ppl_{2} u_{1}) \ppl_{3} v_{0}=x_{0} \ppl_{2} (u_{1} \ast_{3} v_{0}+v_{0} \ast_{2} u_{1}),$
\item[(BZ52)]
$(u_{0}\ppr_{2} x_{1}) \ppl_{3} v_{0}= u_{0}\ppr_{2} (x_{1} \ppl_{3} v_{0}+v_{0}\ppr_{2} x_{1}) ,$
\item[(BZ53)]
$(x_{0}\cdot x_{1}) \trr_{3} u_{0}=x_{0} \trr_{2} (x_{1} \trr_{3} u_{0}+u_{0}\trl_{2} x_{1}),$
\item[(BZ54)]
$(x_{0} \ppl_{2} u_{1}) \trr_{3} v_{0}+ (x_{0} \trr_{2} u_{1}) \ast_{3} v_{0}=x_{0} \trr_{2} (u_{1} \ast_{3} v_{0}+v_{0} \ast_{2} u_{1}),$
\item[(BZ55)]
$(u_{0}\ppr_{2} x_{1}) \trr_{3} v_{0}+ (u_{0}\trl_{2} x_{1}) \ast_{3} v_{0}=u_{0}\trl_{2} (x_{1} \ppl_{3} v_{0}+v_{0}\ppr_{2} x_{1})+ u_{0} \ast_{2} (x_{1} \trr_{3} v_{0}+v_{0}\trl_{2} x_{1}),$
\item[(BZ56)]
$(x_{0} \trr_{2} u_{1})\trl_{3} y_{0}=x_{0} \trr_{2} (u_{1}\trl_{3} y_{0}+y_{0} \trr_{2} u_{1}),$
\item[(BZ57)]
$(u_{0}\trl_{2} x_{1})\trl_{3} y_{0}=u_{0}\trl_{2} (x_{1}\cdot y_{0}+y_{0}\cdot x_{1}),$
\item[(BZ58)]
$(u_{0} \ast_{2} u_{1})\trl_{3} y_{0}=u_{0}\trl_{2} (u_{1}\ppr_{3} y_{0}+y_{0} \ppl_{2} u_{1})+ u_{0} \ast_{2} (u_{1}\trl_{3} y_{0}+y_{0} \trr_{2} u_{1}),$
\item[(BZ59)]
$(x_{1} \ppl_{3} u_{0})\cdot y_{1}+ (x_{1} \trr_{3} u_{0})\ppr_{1} y_{1}=x_{1}\cdot (u_{0}\ppr_{2} y_{1}+y_{1} \ppl_{3} u_{0})+ x_{1} \ppl_{1} (u_{0}\trl_{2} y_{1}+y_{1} \trr_{3} u_{0}),$
\item[(BZ60)]
$(u_{1}\ppr_{3} x_{0})\cdot x_{1} + (u_{1}\trl_{3} x_{0})\ppr_{1} x_{1}=u_{1}\ppr_{1} (x_{0}\cdot x_{1}+x_{1}\cdot x_{0}),$
\item[(BZ61)]
$(u_{1} \ast_{3} u_{0})\ppr_{1} x_{1}=u_{1}\ppr_{1} (u_{0}\ppr_{2} x_{1}+x_{1} \ppl_{3} u_{0}),$
\item[(BZ62)]
$(x_{1}\cdot x_{0}) \ppl_{1} u_{1}=x_{1}\cdot (x_{0} \ppl_{2} u_{1}+u_{1}\ppr_{3} x_{0})+x_{1} \ppl_{1} (x_{0} \trr_{2} u_{1}+u_{1}\trl_{3} x_{0}),$
\item[(BZ63)]
$(x_{1} \ppl_{3} u_{0}) \ppl_{1} u_{1}=x_{1} \ppl_{1} (u_{0} \ast_{2} u_{1}+u_{1} \ast_{3} u_{0}) ,$
\item[(BZ64)]
$(u_{1}\ppr_{3} x_{0}) \ppl_{1} v_{1} =u_{1}\ppr_{1} (x_{0} \ppl_{2} v_{1}+v_{1}\ppr_{3} x_{0}),$
\item[(BZ65)]
$(x_{1}\cdot x_{0}) \trr_{1} u_{1}=x_{1} \trr_{1} (x_{0} \trr_{2} u_{1}+u_{1}\trl_{3} x_{0}),$
\item[(BZ66)]
$(x_{1} \ppl_{3} u_{0}) \trr_{1} u_{1}+ (x_{1} \trr_{3} u_{0}) \ast_{1} u_{1}=x_{1} \trr_{1} (u_{0} \ast_{2} u_{1}+u_{1} \ast_{3} u_{0}),$
\item[(BZ67)]
$(u_{1}\ppr_{3} x_{0}) \trr_{1} v_{1} + (u_{1}\trl_{3} x_{0}) \ast_{1} v_{1}=u_{1}\trl_{1} (x_{0} \ppl_{2} v_{1}+v_{1}\ppr_{3} x_{0})+ u_{1} \ast_{1} (x_{0} \trr_{2} v_{1}+v_{1}\trl_{3} x_{0}),$
\item[(BZ68)]
$(x_{1} \trr_{3} u_{0})\trl_{1} y_{1}=x_{1} \trr_{1} (u_{0}\trl_{2} y_{1}+y_{1} \trr_{3} u_{0}),$
\item[(BZ69)]
$(u_{1}\trl_{3} x_{0})\trl_{1} x_{1}=u_{1}\trl_{1} (x_{0}\cdot x_{1}+x_{1}\cdot x_{0}),$
\item[(BZ70)]
$(u_{1} \ast_{3} u_{0})\trl_{1} x_{1}=u_{1}\trl_{1} (u_{0}\ppr_{2} x_{1}+x_{1} \ppl_{3} u_{0})+ u_{1} \ast_{1} (u_{0}\trl_{2} x_{1}+x_{1} \trr_{3} u_{0}) ,$
\item[(BZ71)]
$(x_{1} \ppl_{1} u_{1})\cdot x_{0}+ (x_{1} \trr_{1} u_{1})\ppr_{3} x_{0}=x_{1}\cdot (u_{1}\ppr_{3} x_{0}+x_{0} \ppl_{2} u_{1})+ x_{1} \ppl_{1} (u_{1}\trl_{3} x_{0}+x_{0} \trr_{2} u_{1}),$
\item[(BZ72)]
$(u_{1}\ppr_{1} x_{1})\cdot x_{0} + (u_{1}\trl_{1} x_{1})\ppr_{3} x_{0}= u_{1}\ppr_{1} (x_{1}\cdot x_{0}+x_{0}\cdot x_{1}),$
\item[(BZ73)]
$(u_{1} \ast_{1} v_{1})\ppr_{3} x_{0}=u_{1}\ppr_{1} (v_{1}\ppr_{3} x_{0}+x_{0} \ppl_{2} v_{1}),$
\item[(BZ74)]
$(x_{1}\cdot y_{1}) \ppl_{3} u_{0}=x_{1}\cdot (y_{1} \ppl_{3} u_{0}+u_{0}\ppr_{2} y_{1})+ x_{1} \ppl_{1} (y_{1} \trr_{3} u_{0}+u_{0}\trl_{2} y_{1}),$
\item[(BZ75)]
$(x_{1} \ppl_{1} u_{1}) \ppl_{3} u_{0}=x_{1} \ppl_{1} (u_{1} \ast_{3} u_{0}+u_{0} \ast_{2} u_{1}),$
\item[(BZ76)]
$(u_{1}\ppr_{1} x_{1}) \ppl_{3} u_{0} = u_{1}\ppr_{1} (x_{1} \ppl_{3} u_{0}+u_{0}\ppr_{2} x_{1}) ,$
\item[(BZ77)]
$(x_{1}\cdot y_{1}) \trr_{3} u_{0}=x_{1} \trr_{1} (y_{1} \trr_{3} u_{0}+u_{0}\trl_{2} y_{1}),$
\item[(BZ78)]
$(x_{1} \ppl_{1} u_{1}) \trr_{3} u_{0}+ (x_{1} \trr_{1} u_{1}) \ast_{3} u_{0}=x_{1} \trr_{1} (u_{1} \ast_{3} u_{0}+u_{0} \ast_{2} u_{1}),$
\item[(BZ79)]
$(u_{1}\ppr_{1} x_{1}) \trr_{3} u_{0} + (u_{1}\trl_{1} x_{1}) \ast_{3} u_{0}=u_{1}\trl_{1} (x_{1} \ppl_{3} u_{0}+u_{0}\ppr_{2} x_{1})+ u_{1} \ast_{1} (x_{1} \trr_{3} u_{0}+u_{0}\trl_{2} x_{1}),$
\item[(BZ80)]
$(x_{1} \trr_{1} u_{1})\trl_{3} x_{0}=x_{1} \trr_{1} (u_{1}\trl_{3} x_{0}+x_{0} \trr_{2} u_{1}),$
\item[(BZ81)]
$(u_{1}\trl_{1} x_{1})\trl_{3} x_{0}=u_{1}\trl_{1} (x_{1}\cdot x_{0}+x_{0}\cdot x_{1}),$
\item[(BZ82)]
$(u_{1} \ast_{1} v_{1})\trl_{3} x_{0}=u_{1}\trl_{1} (v_{1}\ppr_{3} x_{0}+x_{0} \ppl_{2} v_{1})+ u_{1} \ast_{1} (v_{1}\trl_{3} x_{0}+x_{0} \trr_{2} v_{1}),$
\item[(BZ83)]
$\varphi(x_{0} \ppl_{2} u_{1})+\sigma(x_{0} \trr_{2} u_{1})=x_{0}\cdot \sigma(u_{1}) + x_{0} \ppl_{0} d(u_{1}),$
\item[(BZ84)]
$\varphi(u_{0}\ppr_{2} x_{1})+ \sigma(u_{0}\trl_{2} x_{1})=u_{0}\ppr_{0} \varphi(x_{1}),$
\item[(BZ85)]
$\sigma(u_{0} \ast_{2} u_{1})=u_{0}\ppr_{0} \sigma(u_{1}) ,$
\item[(BZ86)]
$d(x_{0} \trr_{2} u_{1})=x_{0} \trr_{0} d(u_{1}),$
\item[(BZ87)]
$d(u_{0}\trl_{2} x_{1})= u_{0}\trl_{0} \varphi(x_{1}),$
\item[(BZ88)]
$d(u_{0} \ast_{2} u_{1})= u_{0}\trl_{0}\sigma(u_{1}) + u_{0} \ast_{0} d(u_{1}),$
\item[(BZ89)]$\varphi(x_{1} \ppl_{3} u_{0})+\sigma(x_{1} \trr_{3} u_{0})=\varphi(x_{1})\ppl_{0} u_{0},$
\item[(BZ90)]$\varphi(u_{1}\ppr_{3} x_{0}) + \sigma(u_{1}\trl_{3} x_{0})=\sigma(u_{1})\cdot x_{0}+ d(u_{1})\ppr_{0} x_{0},$
\item[(BZ91)]$\sigma(u_{1} \ast_{3} u_{0})=\sigma(u_{1}) \ppl_{0} u_{0},$
\item[(BZ92)]$d(x_{1} \trr_{3} u_{0})=\varphi(x_{1})\trr_{0} u_{0},$
\item[(BZ93)]$d(u_{1}\trl_{3} x_{0}) =d(u_{1})\trl_{0} x_{0},$
\item[(BZ94)]$d(u_{1} \ast_{3} u_{0}))=\sigma(u_{1}) \trr_{0} u_{0} + d(u_{1}) \ast_{0} u_{0},$
\item[(BZ95)]$\sigma(u_{1})\cdot x_{1}+ d(u_{1})\ppr_{2} x_{1}=u_{1}\ppr_{1} x_{1},$
\item[(BZ96)]$\varphi(x_{1})\ppl_{2} u_{1}=x_{1} \ppl_{1} u_{1},$
\item[(BZ97)]$\sigma(u_{1}) \ppl_{2} v_{1}=0,$
\item[(BZ98)]$\varphi(x_{1})\trr_{2} u_{1}=x_{1} \trr_{1} u_{1},$
\item[(BZ99)]$\sigma(u_{1}) \trr_{2} v_{1} + d(u_{1}) \ast_{2} v_{1}=u_{1} \ast_{1} v_{1},$
\item[(BZ100)]$d(u_{1})\trl_{2} x_{1} =u_{1}\trl_{1} x_{1},$
\item[(BZ101)]$x_{1}\cdot \sigma(u_{1})+x_{1} \ppl_{3} d(u_{1})=x_{1} \ppl_{1} u_{1},$
\item[(BZ102)]$u_{1}\ppr_{3} \varphi(x_{1})=u_{1}\ppr_{1} x_{1},$
\item[(BZ103)]$u_{1}\ppr_{3}\sigma(v_{1}) = 0,$
\item[(BZ104)]$x_{1} \trr_{3} d(u_{1})=x_{1} \trr_{1} u_{1},$
\item[(BZ105)]$u_{1}\trl_{3} \varphi(x_{1})=u_{1}\trl_{1} x_{1},$
\item[(BZ106)]$u_{1}\trl_{3}\sigma(v_{1}) + u_{1} \ast_{3} d(v_{1})=u_{1} \ast_{1} v_{1}.$
\end{enumerate}
\end{theorem}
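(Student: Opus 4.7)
The plan is to derive this as a direct specialization of \thref{thm:unifyprod}. By hypothesis we have an extending datum $\Omega(Z,V)$ in which $\sigma \equiv 0$ and every $\omega_j \equiv 0$ ($j=0,1,2,3$). Under this assumption, the formulas of Definition~\ref{def:01} collapse to the matched-pair formulas of Definition~\ref{mpLei}, and $\varphi_E(x,u) = (\varphi(x), d(u))$. Thus $\Omega(Z,V)$ yields a Zinbiel 2-algebra on $(E_1,E_0,\varphi_E)$ precisely when conditions (Z1)--(Z120) hold, and the task reduces to showing that these conditions are equivalent to: $(V_1,V_0,d)$ being a strict Zinbiel 2-algebra together with (BZ1)--(BZ106).

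First I would group the (Z)-conditions by their provenance: those coming from the $\circ_i$ multiplications (Z1)--(Z12), those coming from the action axioms (Z13)--(Z96), and those coming from the crossed-module condition on $\varphi_E$ (Z97)--(Z120). Within each group I would substitute $\omega_j = 0, \sigma = 0$ term by term. Conditions (Z1), (Z2), (Z3), (Z6), (Z8), (Z10) do not involve $\omega_i$ or $\sigma$ and immediately become (BZ1)--(BZ3), (BZ6), (BZ8), (BZ10). Conditions (Z4), (Z5), (Z9) lose their $\omega_i$-summand and reduce to (BZ4), (BZ5), (BZ9). Conditions (Z7), (Z11) become trivial identities $0 = 0$, while (Z12) reduces to the Zinbiel identity for $\ast_i$ on $V_i$. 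Continuing, conditions (Z13)--(Z96) reduce in the same way to (BZ11)--(BZ82): the $\omega$-summands drop out, leaving in particular (Z26), (Z37), (Z51), (Z79), (Z93) giving the remaining compatibilities that together with (Z12) show $(V_1,V_0,d)$ with the operations $\ast_0, \ast_1, \ast_2, \ast_3$ is a strict Zinbiel 2-algebra (i.e., the Zinbiel identities plus the action and crossed-module axioms hold on the $V$-side). Finally (Z97)--(Z120) become (BZ83)--(BZ106) by killing the $\omega$ and $\sigma$ occurrences.

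Conversely, assuming $(V_1,V_0,d)$ is a strict Zinbiel 2-algebra and that (BZ1)--(BZ106) hold, each (Z$k$) condition is obtained from the corresponding (BZ) condition by re-inserting the terms $\omega_j(\cdot,\cdot) \equiv 0$ and $\sigma \equiv 0$, which add nothing. The remaining (Z)-conditions that have no (BZ) analogue (namely those that become $0=0$ under the substitution, such as (Z7), (Z11), or the $\omega$-only parts of (Z15), (Z19), (Z31), (Z33), (Z45), (Z47), (Z59), (Z61), (Z73), (Z75), (Z87), (Z89), (Z99), (Z105)) are automatically satisfied. Hence \thref{thm:unifyprod} applies and $E = Z \bowtie V$ is a Zinbiel 2-algebra.

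The main obstacle will be the sheer bookkeeping: matching 120 conditions against 106 and confirming that the collapses under $\omega_j = 0, \sigma = 0$ account exactly for the reduction. I would organize this as a table, pairing (Z$k$) with (BZ$m$) whenever a non-trivial condition remains, and explicitly listing the (Z)-conditions that trivialize. The one substantive point (beyond bookkeeping) is verifying that the induced $\ast$-operations, together with $d \colon V_1 \to V_0$, really constitute a strict Zinbiel 2-algebra structure on $V$ --- this follows by reading off the Zinbiel identity for $\ast_i$ from (Z12), the bimodule/action axioms from (Z26), (Z37), (Z51), (Z79), (Z93) (with $\omega = 0$), and the crossed-module identities for $d$ from (Z100), (Z101), (Z102), (Z106), (Z107), (Z108) together with (Z112)--(Z120). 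Once this is in hand, the equivalence with the list (BZ1)--(BZ106) is immediate.
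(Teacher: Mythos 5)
Your strategy is exactly the one the paper relies on: the theorem is stated without a separate proof precisely because it is the specialization of \thref{thm:unifyprod} obtained by setting $\sigma=0$ and $\omega_j=0$, and your reduction of (Z1)--(Z120) to the strict Zinbiel 2-algebra structure on $(V_1,V_0,d)$ plus (BZ1)--(BZ106) is the intended argument, including your correct reading of (Z12), (Z26), (Z37), (Z51), (Z79), (Z93) and (Z100)--(Z120) as giving the $V$-side structure. One bookkeeping slip: (Z7) does \emph{not} trivialize to $0=0$ under $\omega_i=0$ --- only its two $\omega_i$-terms vanish, leaving $(x_i\ppl_i v_i)\ppl_i w_i = x_i\ppl_i(v_i\ast_i w_i+w_i\ast_i v_i)$, which is exactly (BZ7); among the pair you cite, only (Z11) becomes vacuous, so your table should pair (Z7) with (BZ7) rather than list it among the trivialized conditions (otherwise (BZ7) has no source in your correspondence).
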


By the above theorem we obtain the following result.

\begin{corollary}\colabel{bicrfactor}
A Zinbiel 2-algebra $(E_1, E_0,\varphi_E)$ factorizes through $(Z_1, Z_0,\varphi)$ and
$(V_1, V_0, d)$ if and only if there exists a matched pair of Zinbiel 2-algebras $(Z_1, Z_0,\varphi)$ and $(V_1, V_0, d)$ such
that $(E_1, E_0,\varphi_E)  \cong {Z} \bowtie {V}$.
\end{corollary}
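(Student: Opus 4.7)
The plan is to deduce this corollary from the unified product classification proved earlier, by specializing to the case when both $(Z_1,Z_0,\varphi)$ and $(V_1,V_0,d)$ are themselves Zinbiel 2-subalgebras of $(E_1,E_0,\varphi_E)$. The two directions require quite different amounts of work: the \emph{if} direction is essentially a reading off of Definition \ref{mpLei}, while the \emph{only if} direction uses the isomorphism $E\cong Z\natural V$ together with the observation that, when $V$ is also a subalgebra, the $\sigma$ and $\omega_j$ pieces of the extending datum must vanish.

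For the \emph{if} direction, assume a matched pair structure is given on $(Z_1,Z_0,\varphi)$ and $(V_1,V_0,d)$. By Definition \ref{mpLei} the bicrossed product $Z\bowtie V$ is a Zinbiel 2-algebra on the 2-vector space $Z\oplus V$, with $\varphi_E(x,u)=(\varphi(x),d(u))$. The canonical injections $x_i\mapsto(x_i,0)$ and $u_i\mapsto(0,u_i)$ are easily seen to be morphisms of Zinbiel 2-algebras onto subalgebras (because the formulas in Definition \ref{mpLei} reduce to the original multiplications when one of the arguments vanishes), and $Z\bowtie V= Z\oplus V$ as 2-vector spaces. Hence $E:=Z\bowtie V$ factorizes through $Z$ and $V$.

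For the \emph{only if} direction, suppose $(E_1,E_0,\varphi_E)$ factorizes through $(Z_1,Z_0,\varphi)$ and $(V_1,V_0,d)$, so $E_i = Z_i\oplus V_i$ as vector spaces and both pieces are subalgebras. Use the complement $V_i=\Ker(p_i)$ as in the proof of the previous theorem: this yields an extending datum $\Omega(Z,V)=(\leftharpoonup_j,\rightharpoonup_j,\triangleleft_j,\triangleright_j,\omega_j,\ast_j,\sigma;j=0,1,2,3)$ together with an isomorphism $E\cong Z\natural V$ stabilizing $Z$ and co-stabilizing $V$. The key observation is that since $V_i$ is now a subalgebra of $E_i$, the products $u_i\circ_i v_i$, $u_0\trr u_1$ and $u_1\trl u_0$ lie entirely in $V_i$, hence applying the projection $p_i$ onto $Z_i$ gives $0$. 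By the formulas defining $\omega_i$, $\omega_2$ and $\omega_3$ in the previous theorem, this shows $\omega_j=0$ for $j=0,1,2,3$. Similarly, since $V$ is a sub-2-algebra, $\varphi_E(V_1)\subseteq V_0$, so $\sigma(u_1)=p_0(\varphi_E(u_1))=0$. Plugging $\sigma=0$ and $\omega_j=0$ into Definition \ref{def:01} collapses $Z\natural V$ precisely into the bicrossed product $Z\bowtie V$ of Definition \ref{mpLei}, and the remaining axioms (Z1)--(Z120) of \thref{unifyprod} specialize, exactly, to the matched pair conditions (BZ1)--(BZ106) already verified in the preceding theorem.

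The main obstacle is really bookkeeping: one must check that every axiom (Z$k$) with a nontrivial $\sigma$ or $\omega_j$ contribution either becomes vacuous or yields the corresponding (BZ$k'$) condition after these terms are set to zero, so that Theorem of the bicrossed product precisely recovers the matched pair axioms. Since the previous theorem has done the case-by-case check once, here one only needs to note the inclusions $V_i\circ V_i\subseteq V_i$, $Z_0\trr V_1\subseteq E_1$, $V_1\trl Z_0\subseteq E_1$ and $\varphi_E(V_1)\subseteq V_0$, and conclude by uniqueness of the induced Zinbiel 2-algebra structure on $Z\natural V\cong E$ that this structure coincides with $Z\bowtie V$ for the matched pair so obtained. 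This establishes the stated isomorphism $(E_1,E_0,\varphi_E)\cong Z\bowtie V$ and completes the proof.
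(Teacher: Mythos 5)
Your proposal is correct and follows essentially the same route the paper intends: the paper derives the corollary directly from the preceding matched-pair theorem, and your argument simply fills in the details (the \emph{if} direction from Definition \ref{mpLei}, and the \emph{only if} direction by running the $E\cong Z\natural V$ construction and observing that $\omega_j=0$ and $\sigma=0$ because $V$ is a subalgebra, so the unified product collapses to the bicrossed product). No gaps.
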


\vskip7pt
\footnotesize{
\noindent Ling Zhang\\
College of Mathematics and Information Science,\\
Henan Normal University, Xinxiang 453007, P. R. China;\\
 E-mail address: \texttt{{zhanglingny@163.com}}

\vskip7pt
\footnotesize{
\noindent Tao Zhang\\
College of Mathematics and Information Science,\\
Henan Normal University, Xinxiang 453007, P. R. China;\\
 E-mail address: \texttt{{zhangtao@htu.edu.cn}}


\begin{thebibliography}{9}
\bibitem{AOK}
J. Q. Adashev, B. A. Omirov, A. Khudoyberdiyev, Classification of some classes of Zinbiel algebras, J. Gen. Lie Theory Appl. 4: 1--10 (2010).

\bibitem{ALO}
J. K. Adashev, M. Ladra, B. A. Omirov,  The classification of naturally graded Zinbiel algebras with characteristic sequence equal to $(n-p, p)$, Ukrainian Math. J. 71(2019), 985--1005.

\bibitem{AM01}
A. L. Agore, G. Militaru, Extending structures I: the level of groups, Algebr. Represent. Theory  17(2013), 831--848. arXiv:1011.1633.

\bibitem{AM02}
A. L. Agore, G. Militaru, Extending structures II: the quantum version, J.Algebra 336 (2011), 321--341.


\bibitem{AM1}
A. L. Agore, G. Militaru, Extending structures for Lie algebras, Monatsh. Math. {174}(2014), 169--193. arXiv:1301.5442.

\bibitem{AM2}
A. L. Agore, G. Militaru, Extending structures, Galois groups and supersolvable associative algebras, Monatsh. Math. {181} (2016), 1--33. arXiv:1305.6022.

\bibitem{AM3}
A. L. Agore, G. Militaru, Unified products for Leibniz algebras. Applications, Linear Algebra Appl. {439} (2013), 2609--2633. arXiv:1307.2540.

\bibitem{AM4}
A. L. Agore, G. Militaru, Bicrossed products, matched pair deformations and the factorization index for Lie algebras, Symmetry Integrability Geom. Methods Appl. {\bf 10} (2014), 065, 16 pages.

\bibitem{AM5}
A.L. Agore, G. Militaru, Ito's theorem and metabelian Leibniz algebras, Linear and Multilinear Algebra {\bf 63} (2015), 2187--2199.


\bibitem{AM6}
A.L. Agore, G. Militaru, The global extension problem, crossed uroducts and co-flag noncommutative Poisson algebras, J. Algebra 426(2015), 1--31.



\bibitem{BC}
J. C. Baez and A. S. Crans, Higher- Dimensional Algebra VI: Lie 2-Algebras, The-ory Appl. Categ. 12(2004), 492- -528.


\bibitem{Bl1}
A.M. Bloh,  On a generalization of the concept of Lie algebra, {Dokl. Akad. Nauk SSSR} {165}(1965), 471--473.



\bibitem{C1}
F. Chapoton , Zinbiel algebras and multiple zeta values, arXiv:2109.00241.

\bibitem{C2}
L. Camacho, I. Karimjanov, I. Kaygorodov,  A. Khudoyberdiyev, Central extensions of filiform Zinbiel algebras,
Linear and Multilinear Algebra, 2020, DOI: 10.1080/03081087.2020.1764903





\bibitem{Covez}
S. Covez, M. Farinati, V. Lebed, D. Manchon, Bialgebraic approach to rack cohomology, arXiv:1905.02754



\bibitem{DT}
A.S. Dzhumadildaev, K.M. Tulenbaev, Nilpotency of Zinbiel algebras, J. Dynam. Control. Syst. 11 (2) (2005), 195--213.

\bibitem{DIM}
A. Dzhumadildaev,  N. Ismailov, F. Mashurov, On the speciality of Tortkara algebras, Journal of Algebra, 540 (2019),1--19.

\bibitem{GGZ}
X. Gao, L. Guo, Y. Zhang, Commutative matching Rota-Baxter operators, shuffle products with decorations and matching Zinbiel algebras, Journal of Algebra, 586 (2021), 402--432.

 \bibitem{Hong1}
Y. Hong, Extending structures and classifying complements for left-symmetric algebras, Results Math.74(2019), 32. arXiv:1511.08571.

\bibitem{Hong2}
Y. Hong, Extending structures for associative conformal algebras, Linear Multilinear Algebra 67(2019),196--212. arXiv:1705.02827.

\bibitem{Hong3}
Y. Hong and Y. Su, Extending structures for Lie conformal algebras, Algebr. Represent. Theor. {20} (2017), 209--230.

\bibitem{IP}
S. Ikonicoff, J-S P. Lemay, Cartesian Differential Comonads and New Models of Cartesian Differential Categories, arXiv:2108.04304


\bibitem{GK}
V. Ginzburg, M. Kapranov, Koszul duality for operads, Duke Math. J. 76 (1) (1994), 203--272.


\bibitem{Kaw}
M. Kawski, Chronological algebras: combinatorics and control, Journal of Mathematical Sciences, 103(6)(2001), 725--744.



\bibitem{KPPV}
I. Kaygorodov, Y. Popov, A. Pozhidaev, Y. Volkov,
Degenerations of Zinbiel and nilpotent Leibniz algebras, Linear and Multilinear Algebra {\bf 66}(4) (2018), 704--716.

\bibitem{K21}
I. Kaygorodov, M. Alejandra Alvarez, T. Castilho de Mello,
Central extensions of 3-dimensional Zinbiel algebras,  to appear in Ricerche Matematica (2021),
DOI: 10.1007/s11587-021-00604-1



\bibitem{K16}
P.S. Kolesnikov, Commutator algebras of pre-commutative algebras, Matematicheskii Zhurnal, 16(2)(2016), 145--157.



\bibitem{Lod}
J.-L. Loday, Cup product for Leibniz cohomology and dual Leibniz algebras, Math. Scand. 77 (1995), 189--196.


\bibitem{MS} G. Mukherjee, R. Saha, Cup-product for equivariant Leibniz cohomology and Zinbiel algebras,  Algebra Colloquium 26(2)(2019), 271--284.

\bibitem{N1} A. Naurazbekova, On the structure of free dual Leibniz algebras,  Eurasian Math. J. 10(3)(2019), 40--47.

\bibitem{N2} A. Naurazbekova, U. Umirbaev, Identities of dual Leibniz algebras, TWMS J. Pure Appl.Math. 1(1)(2010), 86--91.

\bibitem{Ni} J. Ni, Centroids of Zinbiel Algebras, Comm. Algebra 42(4)2014, 1844--1853.

\bibitem{S20} R. Saha, Cup-product in Hom-Leibniz cohomology and Hom-Zinbiel algebras, Commun. Algebra 48(10) (2020), 4224--4234.

\bibitem{TW}
Y. Tan, Z. X. Wu, Extending structures for Lie 2-algebras, Mathematics 2019, 7(6), 556.

\bibitem{Yau} D. Yau, Deformation of dual Leibniz algebra morphisms, Comm. Algebra 35(4)(2007), 1369--1378.



\bibitem{Zhang07}T. Zhang, Double cross biproduct and bi-cycle bicross product Lie bialgebras, J. Gen. Lie Theory Appl. (4) (2010), S090602. arXiv:0706.3282.


\bibitem{Zhang2101} T. Zhang,  Zinbiel 2-algebras,  arXiv:2104.12551v1.



\bibitem{Zhang2201}T. Zhang, {Extending structures for 3-Lie algebras},  to appear in Comm. Algebra,  arXiv:1401.4656v5.
DOI:10.1080/00927872.2021.1984493

\bibitem{Zhang2202}T. Zhang,  {Unified products for braided Lie bialgebras. Applications},  to appear in J. Lie Theory.


\bibitem{Zhang2203}T. Zhang,  {Extending structures for infinitesimal bialgebras}, arXiv:2112.11977.


\bibitem{ZZ}T. Zhang, L. Zhang, Extending structures for Zinbiel algebras,  preprint.


\bibitem{ZCY}
J. Zhao, L. Chen, L. Yuan, Extending structures of  Lie conformal superalgebras, Comm. Algebra 47(4)(2019), 1541--1555.

\end{thebibliography}
\end{document}